\documentclass[11pt,a4paper]{article} 

\usepackage[ruled,linesnumbered]{algorithm2e} 
\usepackage{amscd} 
\usepackage{amsfonts} 
\usepackage{amsmath} 
\usepackage{amssymb} 
\usepackage{amsthm} 
\usepackage{bm} 
\usepackage{booktabs} 
\usepackage{delarray} 
\usepackage{extarrows} 
\usepackage{geometry} 
\usepackage{graphicx} 
\usepackage{hyperref} 
\usepackage{longtable} 
\usepackage{tikz-cd} 
\usepackage[all]{xy} 
\usepackage[affil-it]{authblk} 

\usepackage{adjustbox}
\usepackage{booktabs} 
\usepackage{caption}
\usepackage{color}
\usepackage{diagbox} 
\usepackage{enumitem} 
\usepackage{float}
\usepackage{hyperref}
\usepackage{indentfirst} 
\usepackage{makecell} 
\usepackage{mathrsfs} 
\usepackage{mathtools} 
\usepackage{multirow} 
\usepackage{relsize} 
\usepackage{subfigure}
\usepackage{quiver}

\setlist[enumerate]{itemsep=0pt,parsep=0pt}

\allowdisplaybreaks[4] 

\def\deg{\operatorname{deg}} 
 
\def\det{\operatorname{det}}

\def\dim{\operatorname{dim}}

\def\Im{\operatorname{Im}}

\def\Mat{\operatorname{Mat}}
\def\max{\operatorname{max}}

\def\mod{\operatorname{mod}}

\def\rad{\operatorname{rad}}
\def\rank{\operatorname{rank}}

\def\sgn{\operatorname{sgn}}

\def\Span{\operatorname{Span}}

\def\tr{\operatorname{tr}}

\def\iff{\Leftrightarrow}

\def\tw{\widetilde{\omega}} 

\def\Eul{\mathrm{Eul}}

\begin{document}

\newtheorem{definition}{Definition}[section] 
\newtheorem{remark}[definition]{Remark}
\newtheorem{example}[definition]{Example}
\newtheorem{proposition}[definition]{Proposition}
\newtheorem{lemma}[definition]{Lemma}
\newtheorem{corollary}[definition]{Corollary}
\newtheorem{theorem}[definition]{Theorem}
\newtheorem{conjecture}[definition]{Conjecture}

\title{Generic Nullity of Generalized Commutators}
\author{Chen Lin$^1$, Chenhao Tang$^2$\footnote{Corresponding author.}, Enhan Zhao$^3$}
\affil{{\small {$^{1}$\textup{School of Mathematics, Nanjing University, Nanjing 210093, China}\\ $^{2}$\textup{Morningside Center of Mathematics, Academy of Mathematics and Systems Science,
Chinese Academy of Sciences; University of the Chinese Academy of Sciences,
Beijing 100190, China}\\$^3$\textup{School of Intelligence Science and Technology, Peking University, Beijing 100871, China}}\\
$^1$\textup{chen.lin@smail.nju.edu.cn},  $^2$\textup{tangchenhao25@mails.ucas.ac.cn}, 
$^3$\textup{morrezhao@stu.pku.edu.cn}} }
\date{}

\maketitle

\begin{abstract}
We study the generic nullity of generalized commutator operators
$$
L_{\mathbf{A}}(X)=s_{k+1}(A_1,\ldots,A_k,X)
$$
on matrix algebras, where $s_{k+1}$ denotes the standard polynomial. Dixon and Pressman conjectured an explicit formula for the generic nullity of $L_{\mathbf{A}}$, and Brassil and Reichstein proved the conjecture when $k$ is even. In this paper, we settle the remaining case where $k$ is odd. Our proof first treats the boundary cases $k=2n-3$ in dimensions $n$ and $n+1$ using degree decompositions and graph-theoretic interpretations of alternating trace forms, and then establishes a dimension-extension argument from $n$ to $n+2$. Consequently, together with the result of Brassil and Reichstein, we obtain a complete proof of the Dixon–Pressman generic nullity conjecture over any field of characteristic zero.

\end{abstract}

\quad\noindent { 2020\it Mathematics Subject Classification:} 15A24, 15A54, 05C38

\quad\noindent {\it Keywords: Amitsur--Levitsky theorem, Dixon-Pressman conjecture}

\tableofcontents

\section{Introduction}
For a positive integer $n$, let $\Mat_n(\mathbb{R})$ denote the algebra of real $n\times n$ matrices and $I_n$ be its identity matrix. We write $S_m$ for the symmetric group on
$\{1,\cdots,m\}$ and $\sgn(\sigma)$ for the sign of a permutation $\sigma$. For $m\ge1$, the standard polynomial of degree $m$ is the multilinear noncommutative polynomial
\[
s_m(X_1,\cdots,X_m)
=\sum_{\sigma\in S_m}\sgn(\sigma)
X_{\sigma(1)}\cdots X_{\sigma(m)}.
\]
It is alternating: interchanging two arguments changes its sign, and hence it vanishes whenever two arguments coincide. For $k\ge1$ and a fixed tuple
$\mathbf{A}=(A_1,\dots,A_k)\in\Mat_n(\mathbb{R})^k$, define the generalized commutator operator
\[
L_{\mathbf{A}}:\Mat_n(\mathbb{R})\longrightarrow\Mat_n(\mathbb{R}),\quad X\longmapsto s_{k+1}(A_1,\dots,A_k,X).
\]
If $k=1$, the kernel of the linear operator $L_{\mathbf{A}}$ is well-understood by basic linear algebra. If $k\ge 2n-1$, the operator $L_{\mathbf{A}}$ is always zero by the Amitsur--Levitzki theorem; see \cite{AmitsurLevitzki,Swan,Swan2}. Thus, the remaining case is $2\le k\le 2n-2$. Dixon and Pressman introduced the problem of determining the generic nullity of $L_{\mathbf{A}}$ in \cite{DixonPressman}. 

\begin{conjecture}[Dixon, Pressman \cite{DixonPressman}]\label{conj}
Let $n\ge2$ and $2\le k\le 2n-2$ be integers. Then for $\mathbf{A}\in\Mat_n(\mathbb{R})^k$ in generic position,
\[
\dim\ker L_{\mathbf{A}}=
 \begin{cases}
  k,&k~\text{even},\\
  k+1,&k~\text{odd},~n~\text{even},\\
  k+2,&k~\text{odd},~n~\text{odd}.
 \end{cases}
\]
\end{conjecture}

In their article, Dixon and Pressman proved the lower bound in all three cases. Brassil and Reichstein proved the conjecture when $k$ is even using a graph-theoretic approach \cite{BrassilReichstein}. The purpose of this paper is to prove the remaining two cases of Conjecture \ref{conj}.

\begin{theorem}\label{thm:main}
Let $n\ge3$, and let $k$ be odd with $3\le k\le 2n-3$. Then there is a nonempty Zariski-open subset
$\mathcal{A}\subseteq\Mat_n(\mathbb{R})^k$ such that, for every $\mathbf{A}\in\mathcal{A}$,
\begin{equation}\label{eq:main-nullity}
 \dim\ker L_{\mathbf{A}}=
 \begin{cases}
  k+1,&n~\text{even},\\
  k+2,&n~\text{odd}.
 \end{cases}
\end{equation}
\end{theorem}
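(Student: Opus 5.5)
Since Dixon and Pressman already proved the lower bound in all cases, only the upper bound is needed. The plan is to produce a single tuple $\mathbf{A}$, for each pair $(n,k)$, at which $\dim\ker L_{\mathbf{A}}$ equals the right-hand side of \eqref{eq:main-nullity}. This suffices because $\dim\ker L_{\mathbf{A}}\le r$ is a Zariski-open condition, namely the nonvanishing of some $(n^2-r)$-minor of the matrix of $L_{\mathbf{A}}$, whose entries are polynomial in $\mathbf{A}$. So one good point yields a nonempty open set $\mathcal{A}$. The same semicontinuity also lets us work over $\mathbb{C}$, or with specialized or degenerate tuples, whenever convenient.

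The first step treats the boundary case $k=2n-3$ for $\Mat_n$, and also $k=2n-3$ viewed inside $\Mat_{n+1}$, where $k=2(n+1)-5$. Via the trace form, $X\in\ker L_{\mathbf{A}}$ if and only if $\tr(s_{k+1}(A_1,\dots,A_k,X)Y)=0$ for all $Y$. This is the alternating multilinear form $\tr\, s_{k+2}$ up to sign, using cyclicity of the trace and the parity of $k+2$. I would take the $A_i$ to be generic combinations of matrix units $E_{ij}$ with prescribed degrees, that is, a grading by $i-j$ or a weight grading on indices. The operator $L_{\mathbf{A}}$ then decomposes into homogeneous blocks. In each block, $\tr\, s_{m}(E_{i_1j_1},\dots)$ expands as a signed count of Eulerian circuits in the directed multigraph on $\{1,\dots,n\}$ whose edges are the chosen matrix units, following the Brassil--Reichstein and Swan graph interpretation. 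The kernel then has to be computed block by block. The expected kernel consists of the obvious elements: $I_n$, the $A_i$, and, for odd $n$, one extra element. This last element comes from the fact that a symmetric count of Eulerian tours cancels, or fails to cancel, according to the parity of $n$. I would choose $\mathbf{A}$, for instance, as the edges of a directed path or cycle plus diagonal matrices, so that each block is a small explicit matrix whose rank can be checked by sign-counting Eulerian circuits.

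The second step is a dimension extension from $(n,k)$ to $(n+2,k+2)$ and also $(n+2,k)$, which preserves both parities and so covers all odd $k\le 2n-3$ by induction. Embed $\Mat_n\subset\Mat_{n+2}$ as the upper-left block. Append two new matrices $B,C$ supported mainly on the new indices $n+1,n+2$ and coupled to the old ones by a few off-diagonal units, for example $B=E_{n,n+1}+E_{n+1,n+2}$ and $C=E_{n+2,n}+\dots$. Then decompose $\Mat_{n+2}$ into the four block pieces, grade by a torus action that separates old and new indices, and show that on each graded piece $L_{(\mathbf{A},B,C)}$ has kernel no larger than the kernel of $L_{\mathbf{A}}$ on $\Mat_n$ plus exactly two new vectors, namely $B$ and $C$. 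This gives the increment $k+1\mapsto k+3$ (respectively $k+2\mapsto k+4$).

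I expect the main obstacle to be the extension step. The expansion of $s_{k+3}(\mathbf{A},B,C,X)$ mixes terms in which $B$ and $C$ appear at arbitrary positions among the $A_i$, and the mixed blocks (old rows, new columns) must be shown injective modulo the expected vectors. The first thing I would try is a degeneration in which $B$ and $C$ are scaled by $t$ and $t^{-1}$ and only the leading term in $t$ is kept. If the leading term factors through $L_{\mathbf{A}}$, or through an $s_{k+1}$ on a smaller block already controlled by the boundary case, semicontinuity in $t$ finishes the argument.
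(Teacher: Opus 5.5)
Your overall plan matches the paper's: reduce by semicontinuity to exhibiting one good tuple, treat $k=2n-3$ in $\Mat_n$ and in $\Mat_{n+1}$ as base cases, then induct $n\mapsto n+2$. But neither step contains an argument that actually bounds the kernel, and several of your concrete suggestions would not work.

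In the boundary case the whole difficulty is forcing every graded block to have maximal rank, while signed Eulerian counts keep cancelling. Take the natural tree-type choice $A_0=E_{0,0}$, $A_m=E_{0,m}$, $A_{-m}=E_{m,0}$. It puts every $A_i$ in a copy of $\Mat_{n-1}$, so by Amitsur--Levitzki the degree-$0$ block $L_{\mathbf{A},0}$ vanishes identically. The paper therefore deforms to $A_{-l}=E_{l,0}+E_{n-1,r(l)}$, $A_{r(l)}=E_{0,r(l)}+tE_{l,n-1}$ with $r(l)=n-1-l$, and checks that the coefficient of $t$ in the form on $\Span\{Z_i,Z_{r(i)}\}$ is nondegenerate. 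For the blocks of degree $q\neq0$ it needs $q$-forests, which make the pairing matrix diagonal with a single zero. ``Edges of a directed path or cycle plus diagonal matrices'' supplies neither of these. You also miss a simplification: each block condition is Zariski-open on the irreducible space of graded tuples, so a different tuple may be used for each block.

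For $(2n-3,n+1)$, the block-preserving diagonal grading fails outright at $n=3$. For every graded $(A_{-1},A_0,A_1)$ in $\Mat_4$ the degree-$0$ block of $L$ is zero (diagonal matrices commute), so the nullity is at least $6>4$, and an explicit non-graded tuple is required. For odd $n\ge5$, the block embedding $\operatorname{diag}(A_i,0)$ leaves $Z_h$ and $Z_n$ in the kernel, and the paper removes them with a further $\lambda$-perturbation.

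You announce extensions to $(n+2,k+2)$ and $(n+2,k)$ but sketch only the former, and its stated mechanism does not hold. Raising $k$ by $2$ means the old block is governed by $s_{k+3}$, not by $L_{\mathbf{A}}$. Because $B$ and $C$ each occur once per monomial and the embedded $A_i$ vanish on the new indices, the old-to-old component of $L_{(\mathbf{A},B,C)}$ on $X\in\Mat_n$ is
\[
s_{k+3}(\mathbf{A},B_{\mathrm{old}},C_{\mathrm{old}},X)\pm s_{k+2}(\mathbf{A},D,X).
\]
Here $D$ is the old-to-new part of one of $B,C$ times the new-to-old part of the other, so $\rank D\le4$. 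Your hypothesis on $\mathbf{A}$ says nothing about the kernel of this operator. For your sample $B=E_{n,n+1}+E_{n+1,n+2}$ (in your indexing, with $C$ having no entries in row $n+1$), both terms vanish and the old-to-old component is identically zero. The proposed degeneration is vacuous: $s_{k+3}$ is multilinear, so $(B,C)\mapsto(tB,t^{-1}C)$ leaves $L$ unchanged, and torus conjugation preserves rank.

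The paper instead keeps $k$ fixed. With $\tilde A_i=\operatorname{diag}(A_i,0,0)$ and $\tilde A_k=\operatorname{diag}(A_k,0,\tau)$, the operator is exactly $L_{\mathbf{A}}$ on $\mathcal{X}$ and zero on the new block $\mathcal{S}\cong\Mat_2$. It is invertible on $\mathcal{U}$ and $\mathcal{V}$, given $\det s_k(\mathbf{A})\ne0$ and $\det(s_k(\mathbf{A})-\tau s_{k-1}(\bar{\mathbf{A}}))\ne0$. Perturbing $\tilde A_k$ by $tB$ with $B$ off-diagonal couples $\mathcal{S}$ to $\mathcal{U}\oplus\mathcal{V}$. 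The Schur complement at order $t^2$ is then a form on $\mathcal{S}$ that is nondegenerate for suitable $U,V$, precisely because $s_k(\mathbf{A})$ and $s_{k-1}(\bar{\mathbf{A}})$ are linearly independent. This second-order recovery of rank on the new $2\times2$ block, through invertible off-diagonal blocks, is the idea missing from your proposal.
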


In fact, our proof holds for any field of characteristic zero. Combining Theorem \ref{thm:main} with the result of Brassil and Reichstein \cite{BrassilReichstein}, we obtain a complete answer for Conjecture \ref{conj}.

\begin{theorem}\label{thm:complete}
Let $n\ge2$ and $2\le k\le 2n-2$ be integers. Let $K$ be any field of characteristic zero. Then there is a nonempty Zariski-open subset
$\mathcal{A}\subseteq\Mat_n(K)^k$ such that, for every $\mathbf{A}\in\mathcal{A}$,
\[
\dim\ker L_{\mathbf{A}}=
 \begin{cases}
  k,&k~\text{even},\\
  k+1,&k~\text{odd},~n~\text{even},\\
  k+2,&k~\text{odd},~n~\text{odd}.
 \end{cases}
\]
\end{theorem}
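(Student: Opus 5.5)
Theorem~\ref{thm:complete} follows by combining three ingredients over an arbitrary field $K$ of characteristic zero. The first is the Dixon--Pressman lower bound. The second is the Brassil--Reichstein theorem for even $k$. The third is Theorem~\ref{thm:main} for odd $k$ with $3\le k\le 2n-3$.

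The first step is a semicontinuity observation. The map $\mathbf{A}\mapsto L_{\mathbf{A}}$ is polynomial in the entries of $\mathbf{A}$, with integer coefficients. Hence the locus $\{\mathbf{A}:\rank L_{\mathbf{A}}\ge r\}$ is Zariski open in $\Mat_n(K)^k$, since it is defined by the nonvanishing of some $r\times r$ minor. It is also defined over $\mathbb{Q}$. So it suffices to produce one tuple $\mathbf{A}$ whose nullity equals the conjectured value $d(n,k)$: the set of tuples with $\dim\ker L_{\mathbf{A}}\le d(n,k)$ is then a nonempty Zariski-open set. On this set the Dixon--Pressman lower bound $\dim\ker L_{\mathbf{A}}\ge d(n,k)$ forces equality. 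That lower bound is a polynomial-identity statement valid over any field of characteristic zero, because it comes from explicit elements of the kernel.

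The second step is to pass between fields. A witness over $\mathbb{R}$ for nonvanishing of a minor with rational coefficients yields a witness over $\mathbb{Q}$. Indeed, a polynomial with rational coefficients that is nonzero as a function on $\mathbb{R}^N$ is a nonzero polynomial. It therefore has a nonzero value at some rational point, since $\mathbb{Q}$ is infinite. Every field of characteristic zero contains $\mathbb{Q}$, so the same rational tuple works over $K$. This transfers both the Brassil--Reichstein result (stated over $\mathbb{R}$ or $\mathbb{C}$) and Theorem~\ref{thm:main} to $K$. The paper also remarks that its proof of Theorem~\ref{thm:main} works directly over any field of characteristic zero.

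The third step is to check that the cases cover the full range $n\ge2$, $2\le k\le 2n-2$. If $k$ is even, the Brassil--Reichstein theorem applies. If $k$ is odd, then $k\ge3$, and $k\le 2n-2$ forces $k\le 2n-3$ and hence $n\ge3$. Theorem~\ref{thm:main} then applies and gives $k+1$ or $k+2$ according to the parity of $n$. For $n=2$ the only value is $k=2$, which is even.

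The only genuinely delicate point is the field transfer. It requires the descriptions in the cited results to be phrased as "nullity at most $d$ on a nonempty open set," which is exactly the form used above. So I expect no real obstacle: the theorem is a bookkeeping combination of the cited results.
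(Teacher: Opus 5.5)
Your proposal is correct, and its case split matches the paper's: even $k$ comes from the Brassil--Reichstein theorem, and odd $k$ (which forces $3\le k\le 2n-3$ and $n\ge 3$) comes from Theorem~\ref{thm:main}. The difference is in how you reach an arbitrary field $K$ of characteristic zero. The paper only remarks that every step of its argument goes through verbatim over $K$: the Eulerian-circuit computations, the irreducibility arguments, and the choices of $t_0$, $\lambda_0$, $\tau$, $u$, $v_1,v_2$. You instead note that the rank conditions are nonvanishing conditions on minors with integer coefficients. So a real witness yields a rational one, and a rational tuple remains a witness over any $K\supseteq\mathbb{Q}$, since the rank of a rational matrix does not change under field extension. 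Your route is more economical and more robust. It uses the cited results as black boxes over $\mathbb{R}$ (or $\mathbb{C}$) and requires no re-audit of the proof. The paper's remark, by contrast, gives a construction directly over $K$ but leaves the checking to the reader. One small inaccuracy: the Dixon--Pressman lower bound does not come entirely from explicit kernel elements. When $k$ and $n$ are both odd, the extra $+1$ comes from the even rank of the alternating form $w_{\mathbf{A}}$ (Proposition~\ref{prop:alternating-trace} and Corollary~\ref{nullity-lower-bound}). This is harmless, because that argument works over any field of characteristic zero. Your own transfer principle also covers it: the vanishing of all minors of size $n^2-d+1$ on $\Mat_n(\mathbb{R})^k$ is an identity of integer polynomials, hence holds over $K$.
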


Now we explain the proof strategy of Theorem \ref{thm:main}. Assume $n\ge2$ and $k$ is odd with $3\le k\le 2n-3$. Dixon and Pressman have proved that the dimension in \eqref{eq:main-nullity} is the lower bound, so those tuples $\mathbf{A}$ which make \eqref{eq:main-nullity} hold form a Zariski-open subset of $\Mat_n(\mathbb{R})^k$. We call such an $\mathbf{A}$ a \emph{good} tuple. Thus, it suffices to show a good tuple exists. The proof consists of three steps:

\begin{enumerate}[label=\textup{(\roman*)}]
\item In the boundary case $k=2n-3$, we prove that a good tuple exists for $(k,n)=(2n-3,n)$.
\item We prove that a good tuple exists for $(2n-3,n+1)$.
\item We prove that if a good tuple exists for $(k,n)$, then it also exists for $(k,n+2)$. 
\end{enumerate}
Obviously the three steps imply Theorem \ref{thm:main}.

We briefly describe the main ideas behind these three steps. For the boundary case $k=2n-3$, we impose a grading on $\Mat_n$ by matrix diagonals, under which $L_{\mathbf{A}}$ decomposes into degree blocks. The ranks of the nonzero-degree blocks are controlled by interpreting alternating trace expressions as signed sums of Eulerian circuits. The zero-degree block requires a more subtle argument. To pass from dimension $n$ to $n+1$, we use a block embedding for the nonzero-degree blocks and a further perturbation for the zero-degree block. Finally, the dimension-extension step from $n$ to $n+2$ is obtained by embedding a good tuple into $\Mat_{n+2}$ and then introducing a suitable perturbation to recover the required rank on the remaining $2\times2$ block.

\section{Algebraic and graph-theoretic preliminaries}

\subsection{Alternating trace form}

\begin{lemma}\label{lem:identity-insertion}
Let $m\ge2$, $X_1,\dots,X_{m-1}\in\Mat_n(\mathbb{R})$ and $I_n$ be the identity matrix. Then
\begin{equation}\label{eq:identity-insertion}
s_m(X_1,\dots,X_{m-1},I_n)=
 \begin{cases}
  0,&m~\text{even},\\
  s_{m-1}(X_1,\dots,X_{m-1}),&m~\text{odd}.
 \end{cases}
\end{equation}
\end{lemma}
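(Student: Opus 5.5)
Since $I_n$ commutes with everything, the plan is to sort the permutations in $s_m(X_1,\dots,X_{m-1},I_n)$ by where $I_n$ sits and see what each group contributes. Write every $\sigma\in S_m$ as a pair $(\tau,j)$. Here $j\in\{1,\dots,m\}$ is the position occupied by $I_n$, so $\sigma(j)=m$. The permutation $\tau\in S_{m-1}$ is the order in which $X_1,\dots,X_{m-1}$ appear in the remaining slots. This is a bijection between $S_m$ and $S_{m-1}\times\{1,\dots,m\}$.

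Because $I_n$ is the identity, the monomial for $(\tau,j)$ equals $X_{\tau(1)}\cdots X_{\tau(m-1)}$, and this product does not depend on $j$. The one computation to check is the sign. Moving the entry $m$ from position $j$ to the last position takes $m-j$ adjacent transpositions, and the result is $\tau$ extended by fixing $m$. Hence
\[
\sgn(\sigma)=(-1)^{m-j}\sgn(\tau).
\]

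Summing over $j$ first gives
\[
s_m(X_1,\dots,X_{m-1},I_n)=\Bigl(\sum_{j=1}^{m}(-1)^{m-j}\Bigr)\,s_{m-1}(X_1,\dots,X_{m-1}).
\]
The alternating sum $\sum_{j=1}^{m}(-1)^{m-j}$ has $m$ terms and starts with $+1$ at $j=m$. It therefore equals $0$ when $m$ is even and $1$ when $m$ is odd, which is exactly \eqref{eq:identity-insertion}.

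The only step that needs care is the sign bookkeeping, namely confirming that removing $m$ from position $j$ changes the sign by exactly $(-1)^{m-j}$. Everything else is direct. The argument uses nothing about $\mathbb{R}$, so it holds over any field, in particular any field of characteristic zero.
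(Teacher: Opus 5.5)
Your proposal is correct and follows the same route as the paper: group the permutations by the position $j$ of $I_n$, observe that each group contributes $(-1)^{m-j}s_{m-1}(X_1,\dots,X_{m-1})$, and sum the alternating signs. You simply make explicit the sign bookkeeping that the paper's proof leaves implicit.
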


\begin{proof}
    By definition, the expansion of $s_m(X_1,\dots,X_{m-1},I_n)$ has $m!$ monomials indexed by $\sigma\in S_m$. Group them by the position of $I_n$. Each group contributes a $s_{m-1}(X_1,\dots,X_{m-1})$ with a sign determined by the position of $I_n$. Taking the sum over the groups gives \eqref{eq:identity-insertion}.
\end{proof}

Assume from now on that $k$ is odd. For $\mathbf{A}=(A_1,\cdots,A_k)\in\Mat_n(\mathbb{R})^k$, define a bilinear form on $\Mat_n(\mathbb{R})$ by
\[
w_{\mathbf{A}}(X,Y)=\tr\left(XL_{\mathbf{A}}(Y)\right).
\]
Then we have
\begin{equation}\label{eq:rad-kernel}
\rank w_{\mathbf{A}}=\rank L_{\mathbf{A}}
\end{equation}
because the trace pairing $(X,Z)\mapsto\tr(XZ)$ on $\Mat_n(\mathbb{R})$ is nondegenerate.

\begin{proposition}\label{prop:alternating-trace}
For odd $k$, we have
\begin{equation}\label{eq:cyclic-trace-identity}
 \tr s_{k+2}(A_1,\cdots,A_k,X,Y)=-(k+2)w_{\mathbf{A}}(X,Y).
\end{equation}
Consequently, the bilinear form $w_{\mathbf{A}}$ is alternating.
\end{proposition}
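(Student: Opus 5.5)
Write $m=k+2$, which is odd. The plan is to expand $\tr s_{m}(A_1,\dots,A_k,X,Y)$ by grouping the permutations according to the position of $Y$ in the word. We then use cyclicity of trace to rotate each word so that $Y$ stands at the end. By multilinearity and the symmetry of $\tr$ under cyclic rotation, each word becomes $\tr(W\,Y)$ with $W$ a word in $A_1,\dots,A_k,X$.

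The key sign fact is this. Rotating a word of length $m$ by one step corresponds to composing $\sigma$ with an $m$-cycle, and an $m$-cycle has sign $(-1)^{m-1}=+1$ because $m$ is odd. Hence
\[
\tr s_m(Z_1,\dots,Z_m)=m\,\tr\bigl(s_{m-1}(Z_1,\dots,Z_{m-1})\,Z_m\bigr),
\]
where $Z_m$ is the last argument and the factor $m$ counts the possible positions of $Z_m$. Applying this with $Z_m=Y$ gives
\[
\tr s_{k+2}(A_1,\dots,A_k,X,Y)=(k+2)\,\tr\bigl(s_{k+1}(A_1,\dots,A_k,X)\,Y\bigr)=(k+2)\,\tr\bigl(Y L_{\mathbf{A}}(X)\bigr)=(k+2)\,w_{\mathbf{A}}(Y,X).
\]

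Next I use the alternating property of $s_{k+2}$: swapping $X$ and $Y$ changes the sign. Equivalently, applying the same rotation argument with $X$ moved last gives
\[
\tr s_{k+2}(A_1,\dots,A_k,X,Y)=-\tr s_{k+2}(A_1,\dots,A_k,Y,X)=-(k+2)\,w_{\mathbf{A}}(X,Y),
\]
which is exactly \eqref{eq:cyclic-trace-identity}.

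The alternation of $w_{\mathbf{A}}$ follows immediately. Setting $X=Y$, the left side of \eqref{eq:cyclic-trace-identity} is $\tr$ of an alternating polynomial with two equal arguments, hence $0$. Since $k+2\neq0$ in characteristic zero, $w_{\mathbf{A}}(X,X)=0$.

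The only delicate step is the sign bookkeeping. One must check that the cyclic rotation is an even permutation because $m=k+2$ is odd, so that all $m$ groups contribute with the same sign rather than cancelling. One must also track correctly which of $X$ and $Y$ ends up inside $L_{\mathbf{A}}$, since that is what produces the minus sign.
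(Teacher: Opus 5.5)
Your proof is correct and is essentially the paper's argument. Both group the monomials by the position of one argument, use cyclicity of the trace together with the fact that a $(k+2)$-cycle is even, and then read off the sign. The only cosmetic difference is how the minus sign arises: you rotate $Y$ to the end and get it by swapping $X$ and $Y$ in $s_{k+2}$, whereas the paper rotates $X$ to the front and gets the factor $(-1)^k=-1$ from moving $X$ past $A_1,\dots,A_k$.
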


\begin{proof}
The expansion of the left-hand side of
\eqref{eq:cyclic-trace-identity} has $(k+2)!$ monomials indexed by $\sigma\in S_{k+2}$. Group them by the position of $X$. Rotate the unique occurrence of $X$ in each monomial to the first position by the cyclic property of trace. Then each group contributes a $\tr\left(XL_{\mathbf{A}}(Y)\right)$ up to a sign, which is $-1$ by a simple combinatorial argument. Taking the sum over the groups gives \eqref{eq:cyclic-trace-identity}.
\end{proof}

\begin{corollary}\label{nullity-lower-bound}
For $n\ge 3$, odd $k$ with $3\le k\le2n-3$ and $\mathbf{A}=(A_1,\cdots,A_k)\in\Mat_n(\mathbb{R})^k$,
\begin{equation}\label{eq:nullity-lower-bound}
\dim\ker L_{\mathbf{A}}\ge
 \begin{cases}
  k+1,&n~\text{even},\\
  k+2,&n~\text{odd}.
 \end{cases}
\end{equation}
\end{corollary}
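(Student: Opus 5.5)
Since $w_{\mathbf{A}}$ is alternating by Proposition \ref{prop:alternating-trace}, its rank is even. By \eqref{eq:rad-kernel} the rank of $L_{\mathbf{A}}$ is therefore even. Hence $\dim\ker L_{\mathbf{A}}=n^2-\rank L_{\mathbf{A}}$ has the same parity as $n^2$, which is the parity of $n$. The plan is to first exhibit an explicit $k$-dimensional (or $(k+1)$-dimensional) subspace inside $\ker L_{\mathbf{A}}$, and then improve the bound by one using this parity constraint.

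\textbf{Step 1: explicit kernel elements.} For each $i$, $L_{\mathbf{A}}(A_i)=s_{k+1}(A_1,\dots,A_k,A_i)=0$, because $s_{k+1}$ is alternating and has a repeated argument. Moreover, $k+1$ is even, so Lemma \ref{lem:identity-insertion} with $m=k+1$ gives $L_{\mathbf{A}}(I_n)=0$. Thus $V=\Span(A_1,\dots,A_k,I_n)\subseteq\ker L_{\mathbf{A}}$.

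\textbf{Step 2: reduction to the independent case.} The set of tuples with $\dim\ker L_{\mathbf{A}}\ge d$ is Zariski closed, since it is defined by the vanishing of minors. So it suffices to prove \eqref{eq:nullity-lower-bound} on the dense open set where $A_1,\dots,A_k,I_n$ are linearly independent; this set is nonempty because $k+1\le 2n-2<n^2$. On this set $\dim\ker L_{\mathbf{A}}\ge k+1$. (Alternatively, one can argue directly: if $\dim V$ drops, then some $A_i$ is a linear combination of the others and $I_n$. By multilinearity, the alternating property, and Lemma \ref{lem:identity-insertion}, this forces $L_{\mathbf{A}}=0$, and the bound holds trivially since $k+2\le n^2$. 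The closedness argument is cleaner, though.)

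\textbf{Step 3: parity.} The kernel dimension is congruent to $n$ modulo $2$, while $k+1$ is even. If $n$ is even, the bound $k+1$ is already what we need. If $n$ is odd, the kernel dimension is odd and at least the even number $k+1$, so it is at least $k+2$. This gives \eqref{eq:nullity-lower-bound}.

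The main subtlety is the parity step. It relies on the rank identity \eqref{eq:rad-kernel} together with the fact that an alternating form has even rank, which holds in characteristic zero and indeed in any characteristic. The closure argument in Step 2 is routine.
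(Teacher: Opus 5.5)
Your proof is correct and is essentially the paper's argument: $I_n,A_1,\dots,A_k$ lie in $\ker L_{\mathbf{A}}$, the linearly dependent case is harmless, and the even rank of the alternating form $w_{\mathbf{A}}$ raises the bound from $k+1$ to $k+2$ when $n$ is odd. The only cosmetic difference is that you lead with a Zariski-closedness argument for the dependent case, whereas the paper shows directly that $L_{\mathbf{A}}(X)=s_{k+2}(A_1,\dots,A_k,X,I_n)=0$ there, which is the same idea as your alternative remark.
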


\begin{proof}
If $I_n,A_1,\dots,A_k$ are linearly dependent, then by \eqref{eq:identity-insertion}
\[
L_{\mathbf{A}}(X)=s_{k+1}(A_1,\dots,A_k,X)=s_{k+2}(A_1,\dots,A_k,X,I_n)=0
\]
for any $X\in\Mat_n(\mathbb{R})$ because $s_{k+2}$ is multilinear and alternating. If $I_n,A_1,\dots,A_k$ are linearly independent, then $I_n,A_1,\dots,A_k\in\ker L_{\mathbf{A}}$, which gives \eqref{eq:nullity-lower-bound} for $n$ even. By linear algebra, an alternating bilinear form over a field of characteristic different from $2$ has even rank. Therefore by \eqref{eq:rad-kernel}
\[
 \dim\ker L_{\mathbf{A}}=\dim\Mat_n(\mathbb{R})-\rank w_{\mathbf{A}}\equiv n^2\pmod2,
\]
which gives \eqref{eq:nullity-lower-bound} for $n$ odd.
\end{proof}

\begin{definition}\label{def:good-tuple}
We say that a tuple $\mathbf{A}\in\Mat_n(\mathbb{R})^k$ is \emph{good} if equality holds in \eqref{eq:nullity-lower-bound}.
\end{definition}

\subsection{Eulerian circuits and bidirected rooted graphs}

We consider directed graphs with vertices labeled by $\{0,1,\cdots,n-1\}$. We admit repeated edges and loops. An Eulerian path in a directed graph $\Gamma$ is a path visiting every edge exactly once. An Eulerian path is an Eulerian circuit if its initial and terminal vertices coincide. We denote by $\Eul_c(\Gamma)$ the set of Eulerian circuits on $\Gamma$. After labeling the edges $e_1,\cdots,e_m$ in $\Gamma$, we define the sign $\sgn(w)$ of an Eulerian path $w = (e_{\sigma(1)},\cdots,e_{\sigma(m)})$ to be the sign of the permutation $\sigma\in S_m$.

For matrices in $\Mat_n(\mathbb{R})$, we index rows and columns by $0,1,\cdots,n-1$. The matrix unit $E_{u,v}~(0\le u,v\le n-1)$ has a single
$1$ in row $u$ and column $v$. We can regard $E_{u,v}$ as a directed edge
$u\to v$ on $\{0,1,\cdots,n-1\}$. The following lemma is elementary.

\begin{lemma}\label{lem:trace-euler}
Let $E_1,\cdots,E_m$ be matrix units, and let $\Gamma$ be the directed graph with edges $e_i~(1\le i\le m)$ corresponding to $E_i$.
Then for any fixed $1\le j\le m$,
\[
 \tr s_m(E_1,\dots,E_m)
 =\sum_{w\in\Eul_c(\Gamma)}\sgn(w)=
 \begin{cases}
  0,&m~\text{even},\\
  m\sum_{w=(e_j,\cdots)}\sgn(w),&m~\text{odd}.
 \end{cases}
\]
\end{lemma}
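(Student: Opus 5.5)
Expand the trace directly and then use the cyclic invariance of the trace.

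First, by multilinearity,
\[
\tr s_m(E_1,\dots,E_m)=\sum_{\sigma\in S_m}\sgn(\sigma)\tr\bigl(E_{\sigma(1)}\cdots E_{\sigma(m)}\bigr).
\]
Write $E_i=E_{u_i,v_i}$. Since $E_{u,v}E_{u',v'}=\delta_{v,u'}E_{u,v'}$, the product $E_{\sigma(1)}\cdots E_{\sigma(m)}$ is nonzero exactly when $v_{\sigma(t)}=u_{\sigma(t+1)}$ for all $t<m$. In that case it equals $E_{u_{\sigma(1)},v_{\sigma(m)}}$. Its trace is $1$ precisely when, in addition, $v_{\sigma(m)}=u_{\sigma(1)}$, and is $0$ otherwise. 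So the nonzero terms are in bijection with sequences $(e_{\sigma(1)},\dots,e_{\sigma(m)})$ that are closed walks using each edge exactly once, that is, with elements of $\Eul_c(\Gamma)$. Each such term contributes $\sgn(\sigma)=\sgn(w)$. Here an Eulerian circuit is recorded as an ordered sequence with a distinguished starting edge, and this is the convention that makes the bijection exact. This proves the first equality.

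For the second equality, let $\rho=(1\,2\,\cdots\,m)$ act by cyclic rotation: it sends $w=(e_{\sigma(1)},\dots,e_{\sigma(m)})$ to $(e_{\sigma(2)},\dots,e_{\sigma(m)},e_{\sigma(1)})$. This rotated sequence is again an Eulerian circuit, because a closed walk can be started at any of its edges. Its underlying permutation is $\sigma\circ\rho$, where $\rho$ is an $m$-cycle of sign $(-1)^{m-1}$. Hence $\sgn(\rho w)=(-1)^{m-1}\sgn(w)$.

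If $m$ is even, rotation reverses signs. The map $w\mapsto\rho w$ is a bijection of $\Eul_c(\Gamma)$, so
\[
S:=\sum_{w\in\Eul_c(\Gamma)}\sgn(w)
\]
satisfies $S=-S$, and therefore $S=0$.

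If $m$ is odd, rotation preserves signs. Since the edges are labeled and distinct as elements of the sequence, each circuit has exactly one rotation that begins with $e_j$. Its orbit under $\langle\rho\rangle$ therefore has size exactly $m$, and exactly one member of each orbit starts with $e_j$. All members of an orbit carry the same sign, so
\[
S=m\sum_{w=(e_j,\dots)}\sgn(w).
\]

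The only point needing care is the bookkeeping convention that distinguishes circuits with different starting edges, even when edges are parallel. The orbit-size argument relies on edges being labeled, so a rotation never fixes a sequence. Everything else is routine.
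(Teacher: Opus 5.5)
Your proof is correct and follows the paper's argument. The first equality comes from expanding the trace via multiplication of matrix units, and the second from rotating each circuit cyclically, which composes the underlying permutation with an $m$-cycle of sign $(-1)^{m-1}$; your orbit-size bookkeeping (each rotation class has exactly $m$ members, exactly one of which starts with $e_j$) just makes explicit what the paper leaves as ``taking sum over $1\le i\le m$''.
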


\begin{proof}
    The first equality is immediate from the property of the multiplication of matrix units. For the second equality, assume $w = (e_{\sigma(1)},\cdots,e_{\sigma(m)})\in\Eul_c(\Gamma)$ for some $\sigma\in S_m$. For $1\le i\le m$, let $w_i=(e_{\sigma(i)},e_{\sigma(i+1)},\cdots,e_{\sigma(m)},e_{\sigma(1)},\cdots,e_{\sigma(i-1)})\in\Eul_c(\Gamma)$. Then $\sgn(w_i)=-\sgn(w_{i+1})$ if $m$ is even and $\sgn(w_i)=\sgn(w_{i+1})$ if $m$ is odd (here we use the convention $w_{m+1}=w_1$). Taking sum over $1\le i\le m$ gives the result.
    
\end{proof}

\begin{definition}
An undirected graph is \emph{simple} if it has no loops and no repeated edges. A \emph{tree} is a connected simple graph with no circuit. A \emph{forest} is a simple graph whose connected components are trees.
\end{definition}

\begin{definition}\label{def:rooted-forest}
A \emph{rooted forest} $(F,\rho)$ is a forest $F$ together with a distinguished vertex $\rho$, called the root. Its \emph{bidirected rooted graph} $\overleftrightarrow{F}_{\rho}$ is obtained by replacing each edge
$\{u,v\}$ with the two directed edges $u\to v$ and $v\to u$ and adjoining one additional loop $\rho\to\rho$ at the root.
\end{definition}

For a vertex $v$ of an undirected graph $\Gamma$, write $\deg_{\Gamma}(v)$ for the number of edges containing $v$. The following lemma will be used in Subsection \ref{subsec:nonzero-block}.

\begin{lemma}\label{lem:bidirected-tree}
Let $(T,\rho)$ be a rooted tree on $\{0,1,\cdots,n-1\}$ and $\overleftrightarrow{T}_{\rho}$ be its associated bidirected rooted graph. After labeling the $2n-1$ edges of $\overleftrightarrow{T}_{\rho}$, we have
\begin{equation}\label{eq:bidirected-tree-coefficient}
\left|\sum_{w\in\Eul_c(\overleftrightarrow{T}_{\rho})}\sgn(w)\right|
=(2n-1)\deg_T(\rho)!\prod_{v\ne\rho}(\deg_T(v)-1)!\ne0.
\end{equation}
\end{lemma}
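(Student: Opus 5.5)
The plan is to use Lemma \ref{lem:trace-euler} to reduce to circuits that start with the root loop, describe those circuits as depth-first traversals of $T$, and show that they all carry the same sign. The graph $\overleftrightarrow{T}_{\rho}$ has $m=2(n-1)+1=2n-1$ edges, and $m$ is odd. Take $e_j$ to be the loop $\rho\to\rho$. Lemma \ref{lem:trace-euler} (its two equalities together, with $\tr s_m$ eliminated) then gives
\[
\sum_{w\in\Eul_c(\overleftrightarrow{T}_{\rho})}\sgn(w)=(2n-1)\sum_{w=(e_j,\cdots)}\sgn(w).
\]
So it suffices to prove two things about the Eulerian circuits that begin with the root loop. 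First, there are exactly $\deg_T(\rho)!\prod_{v\ne\rho}(\deg_T(v)-1)!$ of them. Second, they all have the same sign. Together these give the absolute value claimed in \eqref{eq:bidirected-tree-coefficient}, and it is nonzero.

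\emph{Step 1: structure of the circuits.} After the loop, such a circuit $w$ is a closed walk from $\rho$ that uses each directed edge $u\to v$ and $v\to u$ of $T$ exactly once. I will show by induction on the size of subtrees that $w$ is a depth-first traversal, in the following sense. Let $v$ be a child of $u$ (with $T$ rooted at $\rho$), and suppose the walk goes down the edge $u\to v$. The only way to leave the subtree $T_v$ is through $v\to u$, and that edge can be used only once. Hence the walk must traverse every directed edge inside $T_v$ before it returns along $v\to u$. Otherwise some edge of $T_v$ would be left unused, or the walk would have to re-enter $T_v$ through $u\to v$, which is already used.

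It follows that $w$ is completely determined by choosing, at each vertex, the order in which its children are visited. The root has $\deg_T(\rho)$ children and every other vertex $v$ has $\deg_T(v)-1$ children. Conversely, every such choice of orders produces an Eulerian circuit starting with the loop. This gives the count.

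\emph{Step 2: constant sign.} This is the main point. Any two choices of child orders are connected by a sequence of moves, each of which swaps two consecutive children $v,v'$ of a single vertex $u$. In the edge sequence of $w$, such a move interchanges two contiguous blocks: the block $(u\to v,\ \text{traversal of }T_v,\ v\to u)$ and the corresponding block for $v'$. These blocks have lengths $2|E(T_v)|+2$ and $2|E(T_{v'})|+2$, both even. Interchanging two adjacent contiguous blocks of lengths $a$ and $b$ is a permutation of sign $(-1)^{ab}$, which here equals $+1$. Therefore $\sgn(w)$ does not depend on the choice of orders. Since $\sgn$ is defined relative to a fixed labelling of the edges, this common sign is some fixed $\pm1$, and taking absolute values yields the formula.

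The only delicate step is the uniqueness argument in Step 1, namely that no Eulerian circuit can leave a subtree before exhausting it. The tree property handles this, because each vertex other than $\rho$ has a unique edge pair connecting it to its parent.
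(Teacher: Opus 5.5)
Your proposal is correct and follows the same route as the paper: use Lemma~\ref{lem:trace-euler} to reduce to circuits beginning with the root loop, count them by the multiplication principle, and show that they all have the same sign. The only difference is in the constant-sign step. The paper simply invokes an argument similar to Lemma~2.5 of Brassil--Reichstein, whereas you supply that argument explicitly: the depth-first structure of the circuits, plus the observation that swapping two adjacent blocks of even length is an even permutation.
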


\begin{proof}
By Lemma \ref{lem:trace-euler}, it suffices to take the sum over all Eulerian circuits starting from the root loop, and then multiply by the number of edges $2n-1$. A similar argument of \cite[Lemma 2.5]{BrassilReichstein} gives that all such Eulerian circuits have the same sign. The multiplication principle gives the count $\deg_T(\rho)!\prod_{v\ne\rho}(\deg_T(v)-1)!$ and hence \eqref{eq:bidirected-tree-coefficient}.
\end{proof}

The following lemma will be used in Subsection \ref{subsec:zero-block}.

\begin{lemma}\label{lem:odd-circuit}
Assume $n\ge 4$. Let $\Gamma$ be a directed graph on $\{0,1,...,n-1\}$ with edges $0\to0,~0\to1,~1\to n-1,~n-1\to2,~2\to0$ and $0\to i,~i\to0$ for $3\le i\le n-2$. Choose $j_1,j_2\in \{0,1,\cdots,n-2\}$ and let $\Gamma'$ be the directed graph obtained by adding two loops $j_1\to j_1,~j_2\to j_2$ to $\Gamma$. Then
\[
\sum_{w\in\Eul_c(\Gamma')}\sgn(w)\ne0\iff\{j_1,j_2\}=\{1,2\},
\]
in which case we have
\begin{equation}\label{eq:circuit-counting}
\left|\sum_{w\in\Eul_c(\Gamma')}\sgn(w)\right|
=(2n-1)(n-3)!\ne0.
\end{equation}
\end{lemma}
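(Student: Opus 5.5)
We apply Lemma~\ref{lem:trace-euler} to reduce to circuits that start with one fixed edge, and then decompose each circuit into excursions from vertex $0$. The graph $\Gamma'$ has $1+4+2(n-4)+2=2n-1$ edges, an odd number. Every vertex is balanced (loops contribute equally to in- and out-degree), and $\Gamma'$ is connected, so Eulerian circuits exist. Take $e_j$ to be the original loop $0\to0$ of $\Gamma$. Lemma~\ref{lem:trace-euler} gives
\[
\sum_{w\in\Eul_c(\Gamma')}\sgn(w)=(2n-1)\sum_{w=(e_j,\cdots)}\sgn(w).
\]
So it suffices to analyse the Eulerian circuits that begin with this loop.

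Every vertex $v\neq0$ has exactly one non-loop incoming edge and one non-loop outgoing edge. Hence, after the first edge, a circuit is a concatenation of \emph{excursions}: closed walks from $0$ back to $0$ that do not pass through $0$ in between. There are three kinds of excursion.
\begin{enumerate}[label=\textup{(\alph*)}]
\item A loop at $0$ other than $e_j$, of length $1$. This occurs only if some $j_t=0$.
\item The ``big'' excursion $0\to1\to n-1\to2\to0$, with the added loops at $1$ and/or $2$ inserted at the appropriate vertices. Its length is $4$ plus the number of added loops among $j_1,j_2$ lying in $\{1,2\}$.
\item For each $3\le i\le n-2$, the excursion $0\to i\to0$, with any added loops at $i$ inserted. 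Its length is $2$ plus the number of added loops at $i$.
\end{enumerate}
A circuit starting with $e_j$ is therefore the same as an ordering of these excursions, together with an internal ordering of the loops inside an excursion when two added loops sit at the same vertex.

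The sign is computed from the permutation of edge labels. Interchanging two adjacent blocks of lengths $a$ and $b$ multiplies the sign by $(-1)^{ab}$. Interchanging two loops at the same vertex multiplies it by $-1$. This gives sign-reversing involutions in the following cases.
\begin{itemize}
\item If $j_1=j_2$, swapping the two added loops reverses the sign, so the sum vanishes.
\item If $j_1\ne j_2$ and at least two excursions have odd length, transposing two odd excursions gives a fixed-point-free involution that reverses the sign. To make this well defined, swap the first two odd excursions in the ordering and leave all other blocks in place; the sign then changes by $-1$, because each even block crossed contributes $+1$.
\end{itemize}
The excursions after $e_j$ have total length $2n-2$, which is even, so the number of odd excursions is even. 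Hence the sum can be nonzero only if every excursion has even length. Checking the cases:
\begin{itemize}
\item any $j_t=0$ gives an odd excursion of type (a);
\item any $j_t=i\ge3$ with $j_1\ne j_2$ gives an odd excursion of type (c);
\item exactly one of $j_1,j_2$ in $\{1,2\}$ makes the big excursion of length $5$, which is odd.
\end{itemize}
Since $n-1\notin\{j_1,j_2\}$, the only surviving case is $\{j_1,j_2\}=\{1,2\}$.

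In that case the loops at $1$ and $2$ sit at distinct vertices, so their positions inside the big excursion (now of length $6$) are forced. All $n-3$ excursions have even length: the big one and the $n-4$ of type (c). Every ordering therefore has the same sign, and there are $(n-3)!$ orderings. This gives $\bigl|\sum\sgn(w)\bigr|=(2n-1)(n-3)!$, which is \eqref{eq:circuit-counting}.

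The main point needing care is that the cancelling involution is well defined and does reverse sign. This amounts to the block-transposition sign rule $(-1)^{ab}$ together with a fixed choice of which pair of odd blocks to swap.
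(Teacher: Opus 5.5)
Your proof is correct and follows essentially the same route as the paper: reduce via Lemma~\ref{lem:trace-euler} to circuits beginning with the loop $0\to0$, decompose each circuit into excursions from $0$, cancel by swapping two odd-length excursions, and count $(n-3)!$ equal-sign orderings when $\{j_1,j_2\}=\{1,2\}$. The only differences are minor. The paper disposes of $j_1=j_2$ and $j_t=0$ by citing the Brassil--Reichstein repeated-edge lemma, whereas you handle these cases directly, and you make the cancelling involution well defined by always swapping the first two odd excursions.
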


\begin{proof}
    By Lemma \ref{lem:trace-euler}, it suffices to take the sum over all Eulerian circuits starting from the loop $0\to0$, and then multiply by the number of edges $2n-1$. By \cite[Lemma 2.2]{BrassilReichstein}, the sum of the signs of all Eulerian circuits of a graph with repeated edges is zero. Thus, we can assume $j_1\ne j_2$ and $j_1,j_2\ne0$. Note that any Eulerian circuit $w$ of $\Gamma'$ (starting from $0\to0$) is a concatenation of circuits from $0$ to $0$. If $\{j_1,j_2\}\ne\{1,2\}$, then $w$ contains exactly two such circuits whose numbers of edges are odd (the loop $0\to0$ is excluded here), and if we exchange the two circuits, we will get a new Eulerian circuit with opposite sign, which makes the entire sum zero.

    If $\{j_1,j_2\}=\{1,2\}$, then the number of edges of any circuit from $0$ to $0$, except the loop $0\to0$, is even. Thus, all Eulerian circuits of $\Gamma'$ (starting from $0\to0$) have the same sign. The multiplication principle gives the count $(n-3)!$ and hence \eqref{eq:circuit-counting}. 
\end{proof}

\section{The boundary case \texorpdfstring{$k=2n-3$}{k=2n-3}}
\label{sec:boundary}

Throughout this section, assume that $n\ge 3$ and $k=2n-3$. The goal of this section is to prove the following theorem.

\begin{theorem}\label{thm:boundary}
In the case $k=2n-3$, a good tuple exists.  
\end{theorem}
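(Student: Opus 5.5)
We must exhibit one tuple $\mathbf{A}\in\Mat_n(\mathbb{R})^{2n-3}$ with $\rank L_{\mathbf{A}}=n^2-(2n-2)$ for $n$ even and $n^2-(2n-1)$ for $n$ odd. By \eqref{eq:rad-kernel} this amounts to $\rank w_{\mathbf{A}}$ being as large as allowed. We compute the entries of $w_{\mathbf{A}}$ through Proposition~\ref{prop:alternating-trace} and Lemma~\ref{lem:trace-euler}. The tuple will consist of matrix units, or small sums of them, so that each value $w_{\mathbf{A}}(E_{u,v},E_{u',v'})$ is, up to the factor $-(k+2)$, a signed count of Eulerian circuits of the graph formed by the $A_i$-edges together with $u\to v$ and $u'\to v'$.

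Grade $\Mat_n$ by diagonals, $\deg E_{u,v}=v-u$. If every $A_i$ is homogeneous and the total degree of the $A_i$ is $0$, then $\tr(XL_{\mathbf{A}}(Y))$ can be nonzero only when $\deg X+\deg Y=0$. So $w_{\mathbf{A}}$ splits into pairings between the degree-$d$ and degree-$(-d)$ pieces for $d\neq0$, plus an alternating form on the diagonal part of degree $0$. The natural choice of $\mathbf{A}$ is the edges of a bidirected path-like tree, which has $2(n-2)$ edges, plus one root loop, giving $2n-3$ in total. Adding the two edges $X,Y$ then yields closed graphs with $2n-1$ edges.

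\textbf{Nonzero blocks.} For $d\neq0$ we want the pairing between degree $d$ and degree $-d$ to be nondegenerate, i.e.\ of full rank $n-|d|$. For $X=E_{u,u+d}$, choose $Y$ so that the $A_i$-edges together with $X$ and $Y$ form $\overleftrightarrow{T}_\rho$ for some rooted tree $T$. This typically means $Y=E_{u+d,u}$ closing a missing tree edge, or an analogous completion. Lemma~\ref{lem:bidirected-tree} then gives a nonzero entry. We would order the bases so that the block matrix is triangular: pairs that do not form a tree give either no Eulerian circuit (the graph is disconnected or unbalanced) or zero by repeated-edge cancellation, as in \cite[Lemma 2.2]{BrassilReichstein}. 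In practice the tuple should be chosen as $A$'s covering all tree edges except one, with the $X,Y$ pair supplying the missing edge in both directions. So we likely need a generic linear combination, $A_i=\sum t_{i,\cdot}E_{\cdot}$, to handle all $d$ at once, and we argue nondegeneracy by taking a leading term in the $t$-variables that is a monomial yielding a triangular matrix.

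\textbf{Zero block (main obstacle).} On the $n$-dimensional diagonal subspace, $w_{\mathbf{A}}$ kills $I_n$. It also kills the diagonal parts lying in the span of the $A_i$ modulo the lower bound. We need its rank to be exactly $n-1$ for $n$ odd and $n-2$ for $n$ even, adjusted by whatever the kernel count requires so that the total kernel is $k+1$ or $k+2$. This is where Lemma~\ref{lem:odd-circuit} enters. We pick the $A_i$ (as the edges of $\Gamma$ besides those contributed by loops) so that $w_{\mathbf{A}}(E_{j_1,j_1},E_{j_2,j_2})\neq0$ only for $\{j_1,j_2\}=\{1,2\}$, up to adding further terms. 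A single such pairing gives rank $2$ only, so we would superpose several relabelled copies of $\Gamma$ with independent generic coefficients. The goal is for the resulting skew matrix on the diagonal, restricted to the complement of $I_n$, to be a generic-enough skew form, with its pattern forming, for example, a path $1$–$2$–$3$–$\cdots$, and hence of maximal even rank. Controlling cross terms between copies is the difficult step. I would try first to pick a single leading monomial per entry, so that the skew matrix's Pfaffian has a nonzero leading term, while the nonzero-degree blocks stay nondegenerate for the same generic coefficients by openness. Summing the ranks then gives exactly the bound of Corollary~\ref{nullity-lower-bound}.
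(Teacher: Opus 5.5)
Your architecture (grading by diagonals, evaluating $w_{\mathbf{A}}$ through signed Eulerian circuits, treating each block separately and combining by Zariski-openness) is the paper's. However, the proposal has genuine gaps, and the first is the bookkeeping. A degree-$d$ block can never be nondegenerate when some $A_i$ has degree $d$: then $A_i\in\mathcal{M}_d$ and $L_{\mathbf{A}}(A_i)=0$ by alternation. For the graded choice $A_i\in\mathcal{M}_i$, the correct target is rank $n-|d|-1$ for $1\le|d|\le n-2$ and rank $1$ for $|d|=n-1$. On $\mathcal{M}_0$, both $I_n$ and $A_0$ lie in the kernel and the form is alternating. So the target there is rank $n-2$ for $n$ even and $n-3$, not $n-1$, for $n$ odd. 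Your stated targets would give total nullity at most $2$, contradicting Corollary~\ref{nullity-lower-bound}. A blockwise proof works precisely because the correct per-block bounds (Lemma~\ref{lem:block-nullity-lower-bound}) add up to exactly $2n-2$ or $2n-1$.

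More substantively, in the degree-$q$ block a single tuple must produce $n-q-1$ nonzero entries at once. Take $A_{\pm m}$ to be the two orientations of a forest edge of length $m$, with a root loop at $0$. Degree balance then makes the pairing between $\{E_{b+q,b}\}$ and $\{E_{a,a+q}\}$ diagonal. So you need a two-component forest with exactly one edge of each length $1,\dots,n-2$ in which every pair $\{a,a+q\}$ other than the length-$q$ edge itself joins the two components. Given that, the one remaining diagonal entry dies by the repeated edge, and the others are nonzero by Lemma~\ref{lem:bidirected-tree}. The existence of such a $q$-forest (Lemma~\ref{lem:q-forest}) is the real combinatorial content of this step. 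The paper builds it from a $\{0,1\}$-valuation with $c_{l+q}=1-c_l$ and $c_k=c_{M-k}$, plus a separate construction for $q=2$, $n$ odd. Appealing to generic coefficients with a triangular leading monomial does not supply it: to show the relevant minor is not identically zero you still need a concrete configuration of this kind.

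For the zero block, you correctly identify the obstacle (cross terms between superposed relabelled copies of the graph in Lemma~\ref{lem:odd-circuit}) but leave it unresolved, and this is where the key idea lives. The paper takes $A_0=E_{0,0}$, $A_l=E_{0,l}$, $A_{-r(l)}=E_{r(l),0}$ and the two-term homogeneous entries $A_{-l}=E_{l,0}+E_{n-1,r(l)}$ and $A_{r(l)}(t)=E_{0,r(l)}+tE_{l,n-1}$. Here $r(l)=n-1-l$ and $1\le l\le s=\lfloor (n-2)/2\rfloor$, plus the pair $E_{0,(n-1)/2}$, $E_{(n-1)/2,0}$ when $n$ is odd. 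Balance at $l$ and $r(l)$ forces the summands to be selected in tandem: choosing $E_{l,0}$ forces $E_{0,r(l)}$, and choosing $E_{n-1,r(l)}$ forces $E_{l,n-1}$. So every contributing graph is the bidirected star centred at $0$ in which, for some set of $l$, the edges between $0$ and $\{l,r(l)\}$ are replaced by the $4$-cycle $0\to l\to n-1\to r(l)\to0$, each replacement costing a factor $t$.

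The $t^0$ term vanishes by Swan's theorem ($2n-1$ edges on $n-1$ vertices). By Lemma~\ref{lem:odd-circuit}, the $t^1$ coefficient of $w(Z_i,Z_j)$ is nonzero exactly when $\{i,j\}=\{l,r(l)\}$. Hence on $W=\Span\{Z_1,\dots,Z_s,Z_{r(1)},\dots,Z_{r(s)}\}$ the Gram matrix is $t\begin{bmatrix}0&D\\-D&0\end{bmatrix}$ modulo $t^2$, with $D$ invertible diagonal. Its determinant is therefore a nonzero polynomial in $t$, and a suitable $t_0$ gives rank $2s$, which is exactly the target. Your Pfaffian leading-term plan needs precisely this mechanism: choices correlated by degree balance, a vanishing zeroth-order term, and a first-order matching pattern. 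Without it, the zero block is unproved.
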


\subsection{Degree blocks and perfect trace pairings}

For matrices in $\Mat_{n}(\mathbb{R})$, we index rows and columns by $0,1,\cdots,n-1$. For $-(n-1)\le q\le n-1$, set the degree-$q$ part of $\Mat_{n}(\mathbb{R})$
\[
\mathcal{M}_q=\Span\{E_{a,a+q}\mid 0\le a,a+q\le n-1\}\subset\Mat_{n}(\mathbb{R}).
\]
Then $\dim\mathcal{M}_q=n-|q|$ and we have a decomposition
\[
\Mat_{n}(\mathbb{R})=\bigoplus_{q=-(n-1)}^{n-1}\mathcal M_q.
\]

Let $\mathbf{A}=(A_{-(n-2)},\cdots,A_{n-2})$ be a tuple with $A_i\in\mathcal{M}_i,~-(n-2)\le i\le n-2$. As degrees add under matrix multiplication, the operator $L_{\mathbf{A}}$ preserves each $\mathcal{M}_q$, and hence it decomposes into degree blocks
\[
 L_{\mathbf{A}}=\bigoplus_{q=-(n-1)}^{n-1}L_{\mathbf{A},q} 
\]
where $L_{\mathbf{A},q}=L_{\mathbf{A}}|_{\mathcal{M}_q}$. For each $q$, define
\[
w_{\mathbf{A},q}:\mathcal{M}_{-q}\times\mathcal{M}_q\longrightarrow\mathbb{R},\quad(X,Y)\longmapsto\tr\left(XL_{\mathbf{A},q}(Y)\right).
\]

\begin{lemma}\label{lem:block-trace-rank}
For every $-(n-1)\le q\le n-1$,
\begin{equation}\label{eq:block-rank-equals-pairing-rank}
\rank w_{\mathbf{A},q}=\rank L_{\mathbf{A},q}.
\end{equation}
For $0<q\le n-1$,
\begin{equation}\label{eq:opposite-block-ranks}
\rank L_{\mathbf{A},-q}=\rank L_{\mathbf{A},q}.
\end{equation}
\end{lemma}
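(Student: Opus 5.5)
Both claims come from the nondegeneracy of the trace pairing restricted to the degree decomposition, together with the alternating property of $w_{\mathbf{A}}$ from Proposition~\ref{prop:alternating-trace}.

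\emph{First claim.} Since $E_{a,b}E_{c,d}=\delta_{bc}E_{a,d}$, the trace of a product of a degree-$p$ element and a degree-$q$ element vanishes unless $p+q=0$. Hence the trace pairing $\mathcal{M}_{-q}\times\mathcal{M}_q\to\mathbb{R}$ is perfect. Concretely, $E_{a+q,a}$ and $E_{a,a+q}$ are dual bases, because $\tr(E_{b+q,b}E_{a,a+q})=\delta_{ab}$. The block $L_{\mathbf{A},q}$ maps $\mathcal{M}_q$ into $\mathcal{M}_q$, since every $A_i$ is homogeneous and $s_{k+1}$ is a sum of products in which each argument appears once, so degrees add to $\sum_i i+q=q$. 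Thus $w_{\mathbf{A},q}(X,Y)$ is the perfect pairing of $X\in\mathcal{M}_{-q}$ with $L_{\mathbf{A},q}(Y)\in\mathcal{M}_q$. Its matrix in the dual bases above is exactly the matrix of $L_{\mathbf{A},q}$, which gives \eqref{eq:block-rank-equals-pairing-rank}. Equivalently, the right radical of $w_{\mathbf{A},q}$ is $\ker L_{\mathbf{A},q}$, so its rank is $\dim\mathcal{M}_q-\dim\ker L_{\mathbf{A},q}$.

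\emph{Second claim.} Since $k=2n-3$ is odd, Proposition~\ref{prop:alternating-trace} says $w_{\mathbf{A}}$ is alternating, hence skew-symmetric: $w_{\mathbf{A}}(X,Y)=-w_{\mathbf{A}}(Y,X)$. Take $X\in\mathcal{M}_{-q}$ and $Y\in\mathcal{M}_q$. Then
\[
w_{\mathbf{A},q}(X,Y)=w_{\mathbf{A}}(X,Y)=-w_{\mathbf{A}}(Y,X)=-w_{\mathbf{A},-q}(Y,X),
\]
where the outer equalities hold because $L_{\mathbf{A}}(Y)=L_{\mathbf{A},q}(Y)$ and $L_{\mathbf{A}}(X)=L_{\mathbf{A},-q}(X)$. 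So the bilinear form $w_{\mathbf{A},-q}$ on $\mathcal{M}_q\times\mathcal{M}_{-q}$ is minus the transpose of $w_{\mathbf{A},q}$. Their Gram matrices are negatives of transposes of each other, so they have equal rank. Applying \eqref{eq:block-rank-equals-pairing-rank} to both $q$ and $-q$ then gives \eqref{eq:opposite-block-ranks}.

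No step is a real obstacle. The only points needing care are that $w_{\mathbf{A},q}$ is defined on $\mathcal{M}_{-q}\times\mathcal{M}_q$, and that the restriction of $w_{\mathbf{A}}$ to these components agrees with the block forms. Both follow from $L_{\mathbf{A}}$ preserving each $\mathcal{M}_q$ and from the vanishing of the trace pairing between components whose degrees do not sum to $0$.
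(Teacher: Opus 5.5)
Your proposal is correct and follows the same route as the paper. The first identity comes from the trace pairing $\mathcal{M}_{-q}\times\mathcal{M}_q\to\mathbb{R}$ being perfect, and the second from the relation $w_{\mathbf{A},q}(X,Y)=-w_{\mathbf{A},-q}(Y,X)$ given by Proposition~\ref{prop:alternating-trace}. You also fill in details the paper leaves implicit: the explicit dual bases, and why $L_{\mathbf{A}}$ preserves each $\mathcal{M}_q$.
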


\begin{proof}
For each $q$, the trace pairing restricts to a perfect pairing $\mathcal{M}_{-q}\times \mathcal{M}_q\rightarrow\mathbb{R}$. This gives
\eqref{eq:block-rank-equals-pairing-rank}. For $q>0,~X\in\mathcal{M}_{-q},~Y\in\mathcal{M}_q$, by Proposition \ref{prop:alternating-trace} $ w_{\mathbf{A},q}(X,Y)=-w_{\mathbf{A},-q}(Y,X)$. This gives \eqref{eq:opposite-block-ranks}.
\end{proof}

\begin{lemma}\label{lem:block-nullity-lower-bound}
For every $-(n-1)\le q\le n-1$,
\begin{equation}\label{eq:block-nullity-lower-bound}
\dim\ker L_{\mathbf{A},q}\ge
\begin{cases}
  0,&q=\pm(n-1)\\
  1,&q\ne 0,\pm(n-1)\\
  2,&q=0,~n~\text{even},\\
  3,&q=0,~n~\text{odd}.
\end{cases}
\end{equation}    
\end{lemma}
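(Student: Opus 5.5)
We exhibit explicit kernel elements in each graded piece and, for $q=0$, add a parity argument. The case $q=\pm(n-1)$ is vacuous, since a dimension is always nonnegative.

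For $q\ne0,\pm(n-1)$, the tuple $\mathbf{A}=(A_{-(n-2)},\dots,A_{n-2})$ contains a component of degree $q$, namely $A_q\in\mathcal M_q$. Since $s_{k+1}$ is alternating, $L_{\mathbf{A}}(A_q)=s_{k+1}(\dots,A_q,\dots,A_q)=0$. Hence $A_q\in\ker L_{\mathbf{A},q}$. If $A_q\ne0$, this gives $\dim\ker L_{\mathbf{A},q}\ge1$. If $A_q=0$, then $L_{\mathbf{A}}=0$ by multilinearity, so every block is zero and all the bounds hold trivially. So assume from now on that every $A_i\neq 0$.

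For $q=0$, the kernel of $L_{\mathbf{A},0}$ contains $A_0$ by the same alternating argument. It also contains $I_n\in\mathcal M_0$, because Lemma \ref{lem:identity-insertion} with $m=k+1$ even gives $L_{\mathbf{A}}(I_n)=0$. We need $I_n$ and $A_0$ to be linearly independent. If instead $A_0=cI_n$, then $L_{\mathbf{A}}(X)=c\,s_{k+1}(\dots,I_n,\dots,X)=0$ for all $X$, again by Lemma \ref{lem:identity-insertion} with $m=k+1$ even. So in that case $L_{\mathbf{A}}=0$ and everything holds. Otherwise $I_n,A_0$ span a $2$-dimensional subspace of $\ker L_{\mathbf{A},0}$, which gives the bound $2$ for $n$ even.

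For $q=0$ with $n$ odd, we use parity, mirroring Corollary \ref{nullity-lower-bound}. The form $w_{\mathbf{A},0}$ on $\mathcal M_0\times\mathcal M_0$ is the restriction of $w_{\mathbf{A}}$ to $\mathcal M_0$. By Proposition \ref{prop:alternating-trace} it is therefore an alternating bilinear form on the $n$-dimensional space $\mathcal M_0$, so it has even rank. By \eqref{eq:block-rank-equals-pairing-rank}, $\rank L_{\mathbf{A},0}$ is even, and hence $\dim\ker L_{\mathbf{A},0}\equiv n\pmod 2$. When $n$ is odd, the kernel dimension is odd and at least $2$, so it is at least $3$.

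The only step needing care is this parity argument: one must check that the block form $w_{\mathbf{A},0}$ really is the restriction of the alternating form $w_{\mathbf{A}}$. This holds because $\mathcal M_{-0}=\mathcal M_0$ and the trace pairing restricted to $\mathcal M_0$ is perfect.
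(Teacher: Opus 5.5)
Your proposal is correct and is the block-by-block adaptation of the proof of Corollary \ref{nullity-lower-bound} that the paper has in mind: $A_q$ (resp.\ $I_n$ and $A_0$) give kernel vectors in $\mathcal{M}_q$ (resp.\ $\mathcal{M}_0$), the degenerate cases force $L_{\mathbf{A}}=0$, and the even rank of the alternating form $w_{\mathbf{A},0}$ raises the bound from $2$ to $3$ when $n$ is odd. One small correction: $w_{\mathbf{A},0}$ is the restriction of $w_{\mathbf{A}}$ directly from the definition $L_{\mathbf{A},0}=L_{\mathbf{A}}|_{\mathcal{M}_0}$, while perfectness of the trace pairing on $\mathcal{M}_0$ is what gives the rank equality \eqref{eq:block-rank-equals-pairing-rank}, not the restriction.
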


\begin{proof}
The proof is similar to that of Corollary \ref{nullity-lower-bound}.
\end{proof}

Put $\mathcal{H}=\prod_{i=-(n-2)}^{n-2}\mathcal{M}_i$, which is an irreducible real affine space. By Lemma \ref{lem:block-nullity-lower-bound}, for fixed $q$, those tuples $\mathbf{A}\in\mathcal{H}$ which make equality hold in \eqref{eq:block-nullity-lower-bound} form a Zariski-open subset of $\mathcal{H}$. We will show the subset is nonempty. 

\subsection{The nonzero-degree blocks}
\label{subsec:nonzero-block}

In this subsection, for $q\ne0$ and $-(n-1)\le q\le n-1$, we will construct a concrete tuple $\mathbf{A}\in\mathcal{H}$ which makes equality hold in \eqref{eq:block-nullity-lower-bound}. By \eqref{eq:opposite-block-ranks} it suffices to deal with $1\le q\le n-1$. The construction is graph-theoretic.

For an undirected edge $\{u,v\}$ on $\{0,\cdots,n-1\}$, call $|u-v|$ its
\emph{length}.

\begin{lemma}\label{lem:q-forest}
Fix $1\le q<n-1$. There exists a forest $F$ on
$\{0,1,\cdots,n-1\}$, satisfying
\begin{enumerate}
\item $F$ has two connected components;
\item for each $m\in\{1,\cdots,n-2\}$, exactly one edge of $F$ has length $m$;
\item the edge of length $q$ is $\{0,q\}$, and among all pairs $\{a,a+q\}$ with $0\le a\le n-1-q$, this is the unique pair lying in the same component of $F$.
\end{enumerate}
Call such a forest a $q$-forest.
\end{lemma}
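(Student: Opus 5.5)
The plan is an explicit construction, verified by checking conditions 1--3 directly. A counting remark guides the shape of $F$. A forest on $n$ vertices with two components has exactly $n-2$ edges, so condition 2 forces the edge lengths to be exactly $1,\dots,n-2$, each used once. Condition 3 says that the partition of $\{0,\dots,n-1\}$ into the two components $C_1\ni 0$ and $C_2$ must separate every pair $\{a,a+q\}$ except $\{0,q\}$. So I will first choose the vertex partition and only then the trees inside it.

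\textbf{Step 1 (partition).} Take a partition in which no class contains two points at distance $q$ other than $0$ and $q$. The natural choice is to $2$-colour $\{0,\dots,n-1\}$ by the parity of $\lfloor x/q\rfloor$, so that $x$ and $x+q$ always receive different colours. I then modify this colouring only at the vertex $q$, moving it into the class of $0$. This creates no new bad pair through $q$ other than possibly $\{q,2q\}$. I expect to have to repair that pair by shifting the whole block $[q,2q)$, or by using the reflection $x\mapsto n-1-x$. In the repaired partition, $\{0,q\}$ should be the unique same-class pair at distance $q$.

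\textbf{Step 2 (trees with prescribed lengths).} Inside each class I will build a spanning tree. The lengths used by the two trees together must be a permutation of $1,\dots,n-2$. For the class containing $\{0,\dots,q\}$-type vertices I would use the zigzag path $0,q,1,q-1,2,\dots$. On an interval of span $q$ this path realises the lengths $q,q-1,\dots,1$ and contains the edge $\{0,q\}$. The longer lengths $q+1,\dots,n-2$ must then be realised by edges joining vertices at large distance. I would realise them by a second zigzag, which alternates between the two ends of the remaining vertex set and assigns the lengths $n-2,n-3,\dots$ in decreasing order.

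\textbf{Step 3 (induction as a fallback).} If the direct construction is messy, I will instead induct on $n$ for fixed $q$. The base case is $n=q+2$: the zigzag path on $\{0,\dots,q\}$ together with the isolated vertex $q+1$. The only pairs at distance $q$ are $\{0,q\}$ and $\{1,q+1\}$, and the second one is split, so conditions 1--3 hold. In passing from $n$ to $n+1$, a new edge of length $n-1$ must be added. Attaching the new vertex $n$ to an old vertex by an edge of length $n-1$ forces the edge $\{1,n\}$. This is admissible exactly when vertex $1$ lies in the component not containing $n-q$. When it does not, I will first apply the reflection $x\mapsto n-1-x$, or re-route the component containing $1$, to restore this invariant. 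Such a modification requires a stronger inductive hypothesis that records which component contains each of the vertices $0,1,2$.

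\textbf{Main obstacle.} Steps 1 and 2 are tightly coupled, and I expect their compatibility to be the main difficulty. The separation in Step 1 needs components spread over residue blocks, while the distinct-lengths condition in Step 2 needs each component to be connected using lengths not used elsewhere. I would first test small cases $n\le 7$ for all $q$ by hand. From these I would extract an explicit pattern for the two components depending on $q$ and $n-q$, and only then verify conditions 1--3 in general.
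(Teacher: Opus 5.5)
Your proposal has a genuine gap. It never actually produces a forest: the ``main obstacle'' you name at the end is the whole content of the lemma. Worse, the concrete choices you do commit to cannot work in general. Your Step~1 colouring and the zigzag path of Step~2 both keep all of $\{0,1,\dots,q\}$ in the component of $0$. Any such partition is impossible whenever $q+3\le n\le 2q+1$. In that range, condition~3 applied to the pairs $\{a,a+q\}$ with $1\le a\le n-1-q\le q$ forces the other class to be exactly $\{q+1,\dots,n-1\}$. Every edge inside either class then has length at most $q<n-2$, so no choice of trees realises the length $n-2$ required by condition~2. The smallest instance is $(n,q)=(5,2)$. There your classes would be $\{0,1,2\}$ and $\{3,4\}$, and no edge of length $3$ fits. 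A valid $2$-forest for $n=5$ is $\{2,3\},\{0,2\},\{1,4\}$, which separates $1$ from $0$. Step~3 breaks at its first step. From the base $n=q+2$ with $q\ge 2$, the vertices $1$ and $n-q=2$ both lie on the zigzag path, so attaching the new vertex via $\{1,n\}$ is inadmissible. Your proposed fix, the reflection $x\mapsto n-1-x$, sends the distinguished edge $\{0,q\}$ to $\{n-1-q,n-1\}$ and so violates condition~3. The ``re-routing'' and the stronger inductive hypothesis are never specified.

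The missing idea is to choose the partition and the trees together by making both components \emph{stars}. The paper takes $M\in\{n-1,n-2\}$ with $M-q$ odd, and a colouring $c_0,\dots,c_M\in\{0,1\}$ satisfying:
\begin{itemize}
\item $c_0=c_q=0$ and $c_M=1$;
\item $c_{l+q}=1-c_l$ for $1\le l\le M-q$, which is exactly your separation requirement;
\item the extra symmetry $c_m=c_{M-m}$.
\end{itemize}
The oddness of $M-q$ is what makes the symmetry compatible with the antiperiodicity, since a clash would need $b+q=M-b$. The edge of length $m$ is then $\{0,m\}$ if $c_m=0$ and $\{M-m,M\}$ if $c_m=1$. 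By the symmetry, $M-m$ has colour $1$ in the second case. So the components are a star at $0$ and a star at $M$, and each length $1,\dots,M-1$ is used exactly once. When $M=n-2$, the vertex $n-1$ is attached by the edge $\{1,n-1\}$, which requires $c_{n-1-q}\ne c_1$. For $q\ge3$ this reads $c_{q-1}\ne c_1$, and the colouring can be chosen to satisfy it; for $q=1$ it is automatic. The remaining case, $q=2$ with $n$ odd, needs a separate explicit construction.
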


The proof of Lemma \ref{lem:q-forest} is a little tedious, and we present it in the next subsection.
\begin{example} 
Let $(n,q)=(8,3)$. Take the two groups
\[
C_0=\{0,1,3,5,7\},\quad C_1=\{2,4,6\},
\]
and the edges
\[
\{0,1\},~\{4,6\},~\{0,3\},~\{2,6\},~\{0,5\},~\{1,7\}.
\]
Their lengths are respectively $1,2,3,4,5,6$. Among the pairs $\{a,a+3\}$, only $\{0,3\}$ is internal to a group. Therefore, this is a $3$-forest.
\end{example}

For $1\le q<n-1$, choose a $q$-forest $F$ and write its edge of length $m~(1\le m\le n-2)$ as $\{u_m,v_m\}$ with $u_m<v_m$. Specialize the tuple $\mathbf{A}\in\mathcal{H}$ by
\begin{equation}\label{eq:configuration-specialization}
A_m=E_{u_m,v_m},\quad A_{-m}=E_{v_m,u_m},\quad A_0=E_{0,0}.
\end{equation}
For $0\le a,b\le n-q-1$, put
\[
Y_a=E_{a,a+q}\in\mathcal{M}_q,\quad X_b=E_{b+q,b}\in\mathcal{M}_{-q}.
\]

\begin{proposition}\label{prop:nonzero-block}
For $1\le q< n-1$ and $\mathbf{A}$ in \eqref{eq:configuration-specialization}, the matrix
$\left(w_{\mathbf{A},q}(X_b,Y_a)\right)_{0\le a,b\le n-q-1}$ is diagonal. Its entry indexed
by $a=b=0$ is zero and every other diagonal entry is nonzero. In particular,
\[
\rank L_{\mathbf{A},q}=n-q-1,\quad\dim\ker L_{\mathbf{A},q}=1.
\]
\end{proposition}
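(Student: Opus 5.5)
By Proposition \ref{prop:alternating-trace}, each entry can be written as
\[
w_{\mathbf{A},q}(X_b,Y_a)=-\tfrac{1}{k+2}\tr s_{k+2}(A_{-(n-2)},\dots,A_{n-2},X_b,Y_a).
\]
All arguments are matrix units, so Lemma \ref{lem:trace-euler} turns this trace into a signed count $\sum_{w\in\Eul_c(\Gamma_{a,b})}\sgn(w)$. Here $\Gamma_{a,b}$ is the directed graph on $\{0,\dots,n-1\}$ whose $2n-1$ edges are:
\begin{itemize}[label={}]
\item the pairs $u_m\to v_m$ and $v_m\to u_m$ for $1\le m\le n-2$, i.e.\ the bidirected forest $F$;
\item the loop $0\to0$ coming from $A_0$;
\item the edge $a\to a+q$ coming from $Y_a$ and the edge $b+q\to b$ coming from $X_b$.
\end{itemize}
The plan is to decide for which $(a,b)$ this graph carries an Eulerian circuit and to evaluate the signed sum when it does.

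\textbf{Off-diagonal and $(0,0)$ entries.} A graph with an Eulerian circuit must be balanced: every vertex has in-degree equal to out-degree. The bidirected forest and the loop are already balanced. The two extra edges therefore must cancel each other's imbalance, which forces $a=b$. So $w_{\mathbf{A},q}(X_b,Y_a)=0$ for $a\neq b$, and the matrix is diagonal.

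An Eulerian circuit also requires the graph to be connected. $F$ has two components by Lemma \ref{lem:q-forest}(1). For $a=b$, the extra edges $a\to a+q$ and $a+q\to a$ join those components exactly when $a$ and $a+q$ lie in different components. By Lemma \ref{lem:q-forest}(3), this happens for every $a\neq 0$.

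For $a=0$, the vertices $0$ and $q$ lie in the same component, and $\{0,q\}$ is already an edge of $F$. The extra pair $0\to q$, $q\to0$ then duplicates the existing directed edges, and the second component stays disconnected. Either observation gives zero: by \cite[Lemma 2.2]{BrassilReichstein} repeated edges kill the signed sum, and in any case there is no Eulerian circuit. So the $(0,0)$ entry vanishes.

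\textbf{Nonzero entries for $a\neq0$.} Here the undirected graph $T=F\cup\{\{a,a+q\}\}$ is a spanning tree. It is simple, since $a$ and $a+q$ lie in different components of $F$, so this edge is new. Consequently $\Gamma_{a,a}$ is exactly the bidirected rooted graph $\overleftrightarrow{T}_{0}$ with root $\rho=0$. Lemma \ref{lem:bidirected-tree} then gives $\bigl|\sum\sgn(w)\bigr|=(2n-1)\deg_T(0)!\prod_{v\neq0}(\deg_T(v)-1)!\neq0$.

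It remains to check that the labeling convention matches. The ordering of edges in Lemma \ref{lem:trace-euler} is the argument order of $s_{k+2}$. Changing the labeling changes every sign by the same global factor $\pm1$, so nonvanishing is unaffected.

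\textbf{Conclusion and main difficulty.} The pairing matrix is diagonal of size $n-q$ with exactly one zero diagonal entry, so $\rank w_{\mathbf{A},q}=n-q-1$. By \eqref{eq:block-rank-equals-pairing-rank}, $\rank L_{\mathbf{A},q}=n-q-1$. Since $\dim\mathcal{M}_q=n-q$, this gives $\dim\ker L_{\mathbf{A},q}=1$.

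The step needing the most care is the $a\neq0$ case: one must confirm that the added edge really completes a simple spanning tree, which uses condition (3) of Lemma \ref{lem:q-forest}. Once that holds, the sign-uniformity in Lemma \ref{lem:bidirected-tree} does the rest. I would also double-check that the Eulerian-circuit sum counted via Lemma \ref{lem:trace-euler} matches the normalization in Proposition \ref{prop:alternating-trace} up to the nonzero constant $-(k+2)$.
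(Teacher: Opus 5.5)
Your proof is correct and follows the paper's argument: degree balance forces $a=b$, the duplicated edge $0\to q$ kills the $(0,0)$ entry by \cite[Lemma 2.2]{BrassilReichstein}, and for $a\neq 0$ the graph is exactly $\overleftrightarrow{T}_0$ with $T=F\cup\{\{a,a+q\}\}$ a tree, so Lemma \ref{lem:bidirected-tree} applies. One small correction: your alternative claim that the $(0,0)$ graph has no Eulerian circuit because it stays disconnected fails when the second component of $F$ is an isolated vertex (e.g.\ $n=3$, $q=1$, where $F=\{\{0,1\}\}$ and the $(0,0)$ graph has all its edges in one component and is balanced), so that entry rests on the repeated-edge cancellation, which you also gave.
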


\begin{proof}
Let $\Gamma=\overleftrightarrow{F}_0\cup\{y_a:a\to a+q\}\cup\{x_b:b+q\to b\}$ (see Definition \ref{def:rooted-forest} for the notation $\overleftrightarrow{F}_0$). By \eqref{eq:cyclic-trace-identity}, up to a nonzero constant, the entry $w_{\mathbf{A},q}(X_b,Y_a)$ is given by $\tr s_{2n-1}(\mathbf{A},X_b,Y_a)$, which by Lemma \ref{lem:trace-euler} can be converted to the sum of the signs of all Eulerian circuits of the directed graph $\Gamma$. If this entry does not vanish, then $\Gamma$ must admit an Eulerian circuit. In particular, the outdegree must be equal to the indegree for every vertex. For the bidirected rooted graph $\overleftrightarrow{F}_0$, by construction the outdegree is equal to the indegree for every vertex. Therefore, one must have $a=b$. Hence the pairing matrix is diagonal.

For $a=b=0$, the directed graph $\Gamma$ has repeated edges $0\to q$ (one from $y_a$ and another from the $q$-forest $F$), so the entry $a=b=0$ is zero by \cite[Lemma 2.2]{BrassilReichstein}. For $a=b\ne0$, let $T$ be the graph obtained from $F$ by adding the edge $\{a,a+q\}$. Then by the definition of $q$-forest $T$ is a tree and $\Gamma=\overleftrightarrow{T}_0$. Therefore, the entry $a=b\ne0$ is nonzero by Lemma \ref{lem:bidirected-tree}.

\end{proof}

For $q=n-1$, set
\[
A_0=E_{0,0},\quad A_m=E_{0,m},\quad A_{-m}=E_{m,0}\quad (1\le m\le n-2)
\]
and $X=E_{n-1,0},~Y=E_{0,n-1}$. Let $T$ be the tree on $\{0,1,\cdots, n-1\}$ with edges $\{\{0,m\}\mid 1\le m\le n-1\}$. Then up to a nonzero constant, the value $w_{\mathbf{A},q}(X,Y)$ is equal to the sum of the signs of all Eulerian circuits of the directed graph $\overleftrightarrow{T}_0$, which is nonzero again by Lemma \ref{lem:bidirected-tree}.
Therefore
\[
\rank L_{\mathbf{A},n-1}=1,\quad\dim\ker L_{\mathbf{A},n-1}=0.
\]

\subsection{Existence of \texorpdfstring{$q$}{q}-forests}
\label{subsec}

In this subsection, we prove Lemma \ref{lem:q-forest}.

\begin{lemma}\label{lem:valuation}
Assume $1\le q<n-1$. Set
\[
M=
\begin{cases}
n-1,& n-1-q\text{ is odd},\\
n-2,& n-1-q\text{ is even}.
\end{cases}
\]
Then one can assign values $c_0,\cdots,c_M\in\{0,1\}$ such that
\begin{equation}\label{eq:boundary-valuation}
c_0=c_q=0,\quad c_M=1
\end{equation}
and
\begin{equation}\label{eq:translation-symmetry}
c_k=c_{M-k}\quad(1\le k<M),\quad c_{l+q}=1-c_l\quad(1\le l\le M-q).
\end{equation}
Moreover, if $q\ge3$, one can further require $c_1\ne c_{q-1}$.
\end{lemma}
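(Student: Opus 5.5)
The plan is to read the constraints \eqref{eq:translation-symmetry} as the restriction to $\{1,\dots,M\}$ of a consistent $\pm$-labelling of $\mathbb{Z}$ under a dihedral group action. First I would record the parity fact that drives everything: by the choice of $M$, in both cases $M-q$ is odd, so $M\equiv q+1\pmod 2$.

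Next I would set up the action. Let $r(x)=M-x$ and $t(x)=x+q$ act on $\mathbb{Z}$. They generate the infinite dihedral group $D=\{t^m,\ t^mr : m\in\mathbb{Z}\}$, and there is a homomorphism $\chi:D\to\mathbb{Z}/2$ with $\chi(t)=1$ and $\chi(r)=0$, so $\chi(t^m)=\chi(t^mr)=m \bmod 2$. The goal is to choose a base point in each $D$-orbit $O\subseteq\mathbb{Z}$ with a value $\epsilon_O$, and to set $c_{g(x_O)}=\epsilon_O+\chi(g)$. This is well defined exactly when $\chi$ vanishes on the stabilisers of points, which I would check directly:
\begin{itemize}
\item a translation $t^m$ with $m\ne0$ fixes no point;
\item a reflection $t^mr$ fixes $k$ iff $2k=M+mq$.
\end{itemize}
If $q$ is even then $M$ is odd, so $M+mq$ is odd and no reflection has a fixed point. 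If $q$ is odd then $M$ is even, so $M+mq$ is even forces $m$ even, and hence $\chi(t^mr)=0$.

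So the labelling is well defined on all of $\mathbb{Z}$. Any such labelling satisfies $c_{M-k}=c_k$ and $c_{l+q}=1-c_l$ for every integer $k,l$, and in particular on the ranges required in \eqref{eq:translation-symmetry}. Note that $c_0$ appears in none of the constraints of \eqref{eq:translation-symmetry}, because $k\ge1$ and $l\ge1$ there. I would therefore simply redefine $c_0=0$ at the end.

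Then I would fix the boundary values. Put $c_q=0$ by choosing $\epsilon$ on the orbit of $q$. Since $M=t(r(q))$, we get $c_M=c_q+1=1$ automatically, which gives \eqref{eq:boundary-valuation}.

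For the extra condition when $q\ge3$, I would use that orbits are unions of residue classes mod $q$, with class $a$ glued to class $M-a$. The residue of $q$ is $0$, and the residues of $1$ and $q-1$ are $\pm1$. Since $q\ge3$, these differ from $0$, so the choice $c_q=0$ does not interfere with $c_1$ and $c_{q-1}$. There are two cases.
\begin{itemize}
\item If $1$ and $q-1$ lie in different orbits, choose their $\epsilon$'s so that $c_1\ne c_{q-1}$.
\item If they lie in the same orbit, then (because $1\not\equiv-1$ for $q\ge3$) one needs $M-1\equiv -1\pmod q$, i.e.\ $M=jq$. In that case $q-1=t^{-(j-1)}r(1)$, so $c_{q-1}=c_1+(j-1)$. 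Since $(j-1)q=M-q$ is odd, $j-1$ is odd, and hence $c_1\ne c_{q-1}$ holds automatically.
\end{itemize}

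The main obstacle is the consistency (stabiliser) check, since that is exactly where the definition of $M$ enters through the oddness of $M-q$. The same parity reappears in the last case, so I would verify both computations carefully.
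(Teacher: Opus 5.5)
Your proof is correct, but it takes a different route from the paper's.

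\textbf{The paper's route.} The paper stays on the finite index set. It splits $\{1,\dots,M-1\}$ into the sets $L_a\cup L_{M-a}$ of residues $a$ and $M-a$ mod $q$. It asserts that on each such set the constraints determine everything from one value, and that the only possible contradiction is a pair with $b+q=M-b$, which is excluded because $M-q$ is odd. The endpoints are then handled by hand: $c_0=0$ is imposed freely, and $c_M=1$ is forced by $c_{M-q}=c_q=0$.

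\textbf{Your route.} You extend the constraints to all of $\mathbb{Z}$, where they become equivariance under the infinite dihedral group $\langle t,r\rangle$ twisted by the character $\chi$. Consistency then reduces to checking that $\chi$ kills point stabilisers. The same parity fact ($M-q$ odd) enters through the fixed-point equation $2k=M+mq$.

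\textbf{What each buys.} Your version gives an actual proof of consistency, whereas the paper's ``only obstruction'' claim is stated rather than checked. It also makes $c_M=1$ automatic via $M=t(r(q))$. The cost is having to notice that the global labelling gives $c_0=1$ and that $c_0$ is unconstrained, which you did. Your final step for $c_1\neq c_{q-1}$ is the same as the paper's: $q\mid M$ with $M/q-1$ odd.

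\textbf{A small inaccuracy.} The orbit of $q$ is the union of the residue classes $0$ and $M \bmod q$. So when $M\equiv\pm1\pmod q$, the choice $c_q=0$ does fix one of $c_1,c_{q-1}$. For example, $n=11$, $q=3$, $M=10$ forces $c_1=0$. Your first case still works: if $1$ and $q-1$ lie in different orbits, at most one of those orbits is the orbit of $q$, so the other $\epsilon$ is still free. You should phrase the argument that way rather than claiming the choice $c_q=0$ does not interfere.
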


\begin{proof}
For $a\in\mathbb{Z}/q\mathbb{Z}$, let $L_a=\{1\le x\le M-1\mid x\equiv a\pmod q\}$. Then a valuation on $L_a \cup L_{M-a}$ satisfying \eqref{eq:translation-symmetry} (if it exists) is uniquely determined by the value $c_x$ for any $x\in L_a \cup L_{M-a}$. If $L_a\ne L_{M-a}$, then it is easy to see such a valuation exists. If $L_a=L_{M-a}$, then the only obstruction to such a valuation is $b+q=M-b$ for some $b\in L_a$, which is impossible because $M-q$ is always odd. Thus, a valuation on $L_a \cup L_{M-a}$ satisfying \eqref{eq:translation-symmetry} always exists.

Put $c_q=c_{M-q}=0$ and extend it uniquely to a valuation on $L_q \cup L_{M-q}$ by \eqref{eq:translation-symmetry} and the preceding discussion. For other $L_a \cup L_{M-a}$, choose one of the two possible valuations on it satisfying \eqref{eq:translation-symmetry}. Put $c_0=0,~c_M=1$. We get a valuation for $\{0,1,\cdots,M\}$ satisfying \eqref{eq:boundary-valuation} and \eqref{eq:translation-symmetry}.

Finally, assume $q\ge 3$. If $L_1 \cup L_{M-1}\ne L_{q-1} \cup L_{M-q+1}$, it is easy to see we can require $c_1\ne c_{q-1}$. If $L_1 \cup L_{M-1}= L_{q-1} \cup L_{M-q+1}$, then $q\mid M$. Using \eqref{eq:translation-symmetry}, we have
\[
c_{q-1}=c_{M-q+1}=1-c_1\ne c_1
\]
because $M-q$ is always odd.
\end{proof}

\begin{proof}[Proof of Lemma \ref{lem:q-forest}]
Let $M$ and $c_i ~(0\le i\le M)$ be as in Lemma \ref{lem:valuation}. For $1\le m\le n-2$, we denote by $e_m$ the edge with length $m$. For $1\le m<M$, define
\[
e_m=
\begin{cases}
\{0,m\},&c_m=0,\\
\{M-m,M\},&c_m=1.
\end{cases}
\]
This forms two connected components
\[
C_0=\{0\le x\le M\mid c_x=0\},\quad C_1=\{0\le x\le M\mid c_x=1\}.
\]
Moreover, by \eqref{eq:boundary-valuation} and \eqref{eq:translation-symmetry}, we have $\{0,q\}\subseteq C_0$ and $\{a,a+q\}$ is not in the same connected component for every $1\le a\le M-q$.

If $M=n-1$, this already gives the desired $q$-forest. Suppose that $M=n-2$ and $q\ne 2$. Add the last vertex $n-1$ to the connected component containing $1$ and define $e_{n-2}=\{1,n-1\}$. If $q\ge3$, then $c_{n-1-q}=c_{q-1}\ne c_1$ by Lemma \ref{lem:valuation}. Therefore, $\{n-1-q,n-1\}$ is not in the same connected component and this gives the desired $q$-forest. If $q=1$, then $c_{n-2}=c_M=1,~c_1=c_q=0$. Therefore, $\{n-2,n-1\}$ is not in the same connected component and this again gives the desired $q$-forest.

It remains to treat $q=2$ with $n$ odd, which requires an additional construction. Write either
\[
n-1=4t+2
\quad\text{or}\quad
n-1=4t+4.
\]
If $n-1=4t+2$, set
\[
C_0=\{0\}\cup
\{v\mid 1\le v\le n-1,~v\equiv1,2\pmod4\},
\]
\[
C_1=\{v\mid 1\le v\le n-1,~v\equiv0,3\pmod4\},
\]
and for $1\le m\le n-2=4t+1$, define
\[
e_m=
\begin{cases}
\{0,m\},&m\equiv2\pmod4,\\
\{2,m+2\},&m\equiv3\pmod4,\\
\{3,m+3\},&m<4t,~m\equiv0,1\pmod4,\\
\{1,4t+1\},&m=4t,\\
\{1,4t+2\},&m=4t+1.
\end{cases}
\]
If $n-1=4t+4$, set
\[
C_0=\{0\}\cup
\{v\mid 1\le v\le n-1,~v\equiv2,3\pmod4\},
\]
\[
C_1=\{v\mid 1\le v\le n-1,~v\equiv0,1\pmod4\},
\]
and for $1\le m\le n-2=4t+3$, define
\[
e_m=
\begin{cases}
\{2,m+2\},&m\equiv1\pmod4,\\
\{0,m\},&m\equiv2\pmod4,\\
\{1,m+1\},&m\equiv0,3\pmod4.
\end{cases}
\]
One checks that this gives the desired $q$-forest and completes the proof.
\end{proof}

\subsection{The zero-degree block}
\label{subsec:zero-block}

In this subsection, for $q=0$, we will construct a concrete tuple $\mathbf{A}\in\mathcal{H}$ which makes equality hold in \eqref{eq:block-nullity-lower-bound}. We can assume $n\ge4$.

Set $s=\left\lfloor\frac{n-2}{2}\right\rfloor>0$ and a reflection $r(l)=n-1-l$. We specialize the tuple $\mathbf{A}\in\mathcal{H}$ to a family $\mathbf{A}(t)$ parametrized by $t\in\mathbb{R}$. Set $A_0=E_{0,0}$ and for $1\le l\le s$,
\begin{equation}\label{eq:zero-block-family}
A_l=E_{0,l},~A_{-l}=E_{l,0}+E_{n-1,r(l)},~A_{r(l)}(t)=E_{0,r(l)}+tE_{l,n-1},~A_{-r(l)}=E_{r(l),0}.
\end{equation}
If $n$ is odd, also set
\begin{equation}\label{eq:zero-block-middle-pair}
A_{\frac{n-1}{2}}=E_{0,\frac{n-1}{2}},\quad A_{-\frac{n-1}{2}}=E_{\frac{n-1}{2},0}.
\end{equation}
For $0\le i\le n-1$, set $Z_i=E_{i,i}$.

\begin{proposition}\label{prop:zero-block-first-order}
For $1\le i,j\le s$, we have
\[
    w_{\mathbf{A}(t),0}(Z_i,Z_j)\equiv w_{\mathbf{A}(t),0}(Z_{r(i)},Z_{r(j)})\equiv0~(\mod t^2).
\]
For $1\le i,j\le s,~i\ne j$, we have
\[
    w_{\mathbf{A}(t),0}(Z_i,Z_{r(j)})\equiv w_{\mathbf{A}(t),0}(Z_{r(i)},Z_j)\equiv0~(\mod t^2).
\]
For $1\le i\le s$, there exists a nonzero constant $c_i$ such that
\[
     w_{\mathbf{A}(t),0}(Z_i,Z_{r(i)})=-w_{\mathbf{A}(t),0}(Z_{r(i)},Z_i)\equiv c_i t~(\mod t^2).
\]
\end{proposition}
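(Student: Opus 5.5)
We argue as in Proposition \ref{prop:nonzero-block}. By Proposition \ref{prop:alternating-trace}, each entry $w_{\mathbf{A}(t),0}(Z_a,Z_b)$ equals, up to the nonzero constant $-1/(2n-1)$, the trace $\tr s_{2n-1}(\mathbf{A}(t),Z_a,Z_b)$. We expand this trace multilinearly: from each $A_{\pm l}$ and $A_{\pm r(l)}$ we pick one of its matrix-unit summands. Each choice gives a directed graph on $\{0,\dots,n-1\}$. By Lemma \ref{lem:trace-euler}, the trace is a sum over these choices of the signed Eulerian circuit counts, weighted by the product of the coefficients (powers of $t$). Working mod $t^2$, only two kinds of choice survive: those using no $tE_{l,n-1}$ (the $t^0$ coefficient) and those using exactly one (the $t^1$ coefficient). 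Throughout, we discard any choice whose graph is not balanced (indegree $\neq$ outdegree at some vertex), since it has no Eulerian circuit. We also use that the sign sum is invariant under relabeling vertices, because signs depend only on the edge labeling.

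\emph{The $t^0$ coefficient.} Every $A_{r(l)}$ contributes $E_{0,r(l)}$. Since $r(l)\le n-2$ for $l\ge1$ (and the middle pair for odd $n$ avoids $n-1$), no edge enters $n-1$. Any choice of $E_{n-1,r(l)}$ from $A_{-l}$ would therefore leave $n-1$ unbalanced, so every $A_{-l}$ contributes $E_{l,0}$. The resulting graph is the bidirected star on $\{0,\dots,n-2\}$ rooted at $0$ (with the root loop), together with the loops $Z_a,Z_b$.
\begin{itemize}
\item If $a=b$, there are repeated edges and the sum vanishes by \cite[Lemma 2.2]{BrassilReichstein}.
\item If $a\neq b$, consider an Eulerian circuit starting at the root loop. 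It splits into closed excursions from $0$: the excursions $0\to a\to a\to 0$ and $0\to b\to b\to0$ have odd length $3$, and all others have length $2$. Swapping the two odd excursions changes the sign of the edge permutation by $(-1)^{3\cdot 3}=-1$, since passing over the even blocks contributes trivially. This is the same involution as in Lemma \ref{lem:odd-circuit}, so the sum is $0$.
\end{itemize}
Hence every entry vanishes mod $t$.

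\emph{The $t^1$ coefficient.} Suppose exactly one $A_{r(m)}$ contributes $tE_{m,n-1}$. Then $n-1$ has exactly one incoming edge, so exactly one $A_{-l'}$ must contribute $E_{n-1,r(l')}$, and it then no longer contributes $l'\to0$.
\begin{itemize}
\item If $l'\ne m$, vertex $m$ has incoming edge $0\to m$ but outgoing edges $m\to n-1$ and $m\to 0$ (the loops $Z_a,Z_b$ do not affect the balance). Vertex $l'$ has incoming $0\to l'$ and no outgoing non-loop edge. Both are unbalanced, so this choice contributes nothing.
\item Thus $l'=m$. The graph then consists of the root loop, the $4$-cycle $0\to m\to n-1\to r(m)\to 0$ (note that $0\to r(m)$ is absent), the bidirected pairs $0\leftrightarrow i$ for all other $i\in\{1,\dots,n-2\}$ (including the middle vertex when $n$ is odd), and the loops $Z_a,Z_b$ with $a,b\le n-2$.
\end{itemize}
After relabeling $m\mapsto1$ and $r(m)\mapsto2$, this is exactly the graph $\Gamma'$ of Lemma \ref{lem:odd-circuit} with $\{j_1,j_2\}=\{a,b\}$. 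That lemma says the contribution is nonzero iff $\{a,b\}=\{m,r(m)\}$, and then its absolute value is $(2n-1)(n-3)!$. Summing over $m$, the $t^1$ coefficient of the $(a,b)$ entry is nonzero only when $\{a,b\}=\{i,r(i)\}$ for some $i\le s$, and then it equals a nonzero constant $c_i$. This gives all three assertions. The relation $w(Z_i,Z_{r(i)})=-w(Z_{r(i)},Z_i)$ is immediate from Proposition \ref{prop:alternating-trace}.

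The main obstacle is the balancing bookkeeping in the $t^1$ case: one must check that no other mixed choice of summands yields a balanced graph, and that the surviving graph really coincides with the graph of Lemma \ref{lem:odd-circuit}, including the middle vertex for odd $n$ and the correct number of edges, $2n-1$.
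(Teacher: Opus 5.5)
Your proposal is correct and follows essentially the same route as the paper. You expand into matrix-unit summands and use in/out-degree balance to show that the constant term comes only from the bidirected star on $\{0,\dots,n-2\}$ with the two extra loops, and the linear term only from graphs containing a single $4$-cycle $0\to m\to n-1\to r(m)\to 0$; you then conclude with Lemma \ref{lem:odd-circuit}. The one minor difference is the constant term: you give the explicit sign-reversing swap of the two odd-length excursions, whereas the paper cites \cite[Theorem 2]{Swan} and notes that the direct check you carried out also works.
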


\begin{proof}
By \eqref{eq:cyclic-trace-identity} and Lemma \ref{lem:trace-euler}, up to a nonzero constant,  we can convert $w_{\mathbf{A}(t),0}(Z_i,Z_j)~(i,j\in[1,s]\cup[r(s),r(1)])$ to the sum of the signs of Eulerian circuits of some directed graphs. In fact, we can expand $w_{\mathbf{A}(t),0}(Z_i,Z_j)$ into $2^{2s}$ terms by decomposing $A_{-l},A_{r(l)}$ in \eqref{eq:zero-block-family} into matrix units. Each term corresponds to a directed graph on $\{0,1,\cdots,n-1\}$ with edges \[
A_0:0\to0,\quad Z_i:i\to i,\quad Z_j:j\to j
\] 
and 
\[
A_l:0\to l,\quad A_{-l}:l\to0~\text{or}~n-1\to r(l)
\]
\[
A_{r(l)}(t):0\to r(l)~\text{or}~l\to n-1,\quad A_{-r(l)}:r(l)\to0
\]
for $1\le l\le s$ (if $n$ is odd, we also include $0\to\frac{n-1}{2}$ and $\frac{n-1}{2}\to0$). 

If this term is nonzero, then the corresponding graph must admit an Eulerian circuit. In particular, the outdegree must be equal to the indegree for every vertex. For every $1\le l\le s$, if the edge corresponding to $A_{-l}$ is $l\to0$, then the edge corresponding to $A_{r(l)}(t)$ must be $0\to r(l)$: otherwise the outdegree of $r(l)$ is larger than the indegree. In this case, the three vertices $0,l,r(l)$ look like
\[\begin{tikzcd}
	0 &&& l &&& {r(l)}
	\arrow["{A_l}"{description}, curve={height=-6pt}, from=1-1, to=1-4]
	\arrow["{A_{r(l)}(t)}"{description}, curve={height=30pt}, from=1-1, to=1-7]
	\arrow["{A_{-l}}"{description}, curve={height=-6pt}, from=1-4, to=1-1]
	\arrow["{A_{-r(l)}}"{description}, curve={height=30pt}, from=1-7, to=1-1]
\end{tikzcd}\]

Similarly, if the edge corresponding to $A_{-l}$ is $n-1\to r(l)$, then the edge corresponding to $A_{r(l)}(t)$ must be $l\to n-1$: otherwise the indegree of $l$ is larger than the outdegree. In this case, the four vertices $0,l,r(l),n-1$ look like
\[\begin{tikzcd}
	0 && l && {r(l)} && {n-1}
	\arrow["{A_l}"{description}, curve={height=-6pt}, from=1-1, to=1-3]
	\arrow["{A_{r(l)}(t)}"{description}, curve={height=-18pt}, from=1-3, to=1-7]
	\arrow["{A_{-r(l)}}"{description}, curve={height=-18pt}, from=1-5, to=1-1]
	\arrow["{A_{-l}}"{description}, curve={height=-6pt}, from=1-7, to=1-5]
\end{tikzcd}\]
and it will contribute a factor $t$ (from $A_{r(l)}(t)$) to the original term. 

Therefore, the graph corresponding to the constant term of $w_{\mathbf{A}(t),0}(Z_i,Z_j)\in\mathbb{Z}[t]$ is the one with edges $0\to0,~i\to i,~j\to j$ and $0\to l,~l\to0$ for $1\le l\le n-2$. No edge contains the vertex $n-1$ and we can delete it. Then we get a directed graph $\Gamma$ with $n-1$ vertices and $2n-1$ edges. By \cite[Theorem 2]{Swan}, we have
\[
\sum_{w\in\Eul_c(\Gamma)}\sgn(w)=0
\]
(one can also check this directly, using a similar argument of Lemma \ref{lem:odd-circuit}). Hence, the constant term of $w_{\mathbf{A}(t),0}(Z_i,Z_j)$ vanishes. 

Similarly, the graphs corresponding to the linear term of $w_{\mathbf{A}(t),0}(Z_i,Z_j)$ are those with edges $0\to0,~i\to i,~j\to j$, $0\to l,~l\to n-1,~n-1\to r(l),~r(l)\to0$ for some $1\le l\le s$ and $0\to l',~l'\to0$ for $1\le l'\le n-2,~l'\ne l,r(l)$. By Lemma \ref{lem:odd-circuit}, the linear term of $w_{\mathbf{A}(t),0}(Z_i,Z_j)$ is nonzero if and only if $\{i,j\}=\{l,r(l)\}$ for some $1\le l\le s$. This finishes the proof.
\end{proof}

Let $W=\Span\{Z_1,\cdots,Z_s,Z_{r(1)},\cdots,Z_{r(s)}\}\subseteq\mathcal{M}_0$. Let $M(t)$ be the matrix of $w_{\mathbf{A}(t),0}|_W$ under the ordered basis $Z_1,\cdots,Z_s,Z_{r(1)},\cdots,Z_{r(s)}$. By Proposition \ref{prop:zero-block-first-order}, we have
\[
M(t)\equiv t
\begin{bmatrix}
0&D\\
-D&0
\end{bmatrix}
(\mod t^2),\quad D=\operatorname{diag}\{c_1,\cdots,c_s\}.
\]
Therefore
\[
\det M(t)\equiv ct^{2s}(\mod t^{2s+1})
\]
for some nonzero constant $c$. In particular, $\det M(t)$ is a nonzero polynomial in $\mathbb{R}[t]$, so there exists $t_0\in \mathbb{R}$ such that
\[
\det M(t_0)\ne0.
\]
Set $\mathbf{A}=\mathbf{A}(t_0)$. For this specialization, we have
\[
\rank L_{\mathbf{A},0}=\rank w_{\mathbf{A},0}\ge\rank w_{\mathbf{A},0}|_W=2s.
\]
Combining this with Lemma \ref{lem:block-nullity-lower-bound}, we obtain
\[
\dim\ker L_{\mathbf{A},0}=
\begin{cases}
2,&n~\text{even},\\
3,&n~\text{odd}.
\end{cases}
\]

\subsection{Proof of Theorem \ref{thm:boundary}}\label{subsec:proof-of-thm-boudary}

\begin{proof}[Proof of Theorem \ref{thm:boundary}]
For fixed $-(n-1)\le q\le n-1$, by Lemma \ref{lem:block-nullity-lower-bound}, those tuples $\mathbf{A}\in\mathcal{H}=\prod_{i=-(n-2)}^{n-2}\mathcal{M}_i$ which make equality hold in \eqref{eq:block-nullity-lower-bound} form a Zariski-open subset of $\mathcal{H}$. By Subsections \ref{subsec:nonzero-block} and \ref{subsec:zero-block}, the subset is nonempty. Since $\mathcal{H}$ is an irreducible real affine space, the finite intersection of nonempty Zariski-open subsets of $\mathcal{H}$ is still nonempty. This implies the existence of a tuple $\mathbf{A}\in\mathcal{H}$ which makes equality hold in \eqref{eq:block-nullity-lower-bound} for every $-(n-1)\le q\le n-1$. For such $\mathbf{A}$, we have
\[
\dim\ker L_{\mathbf{A}}=\sum_{q=-(n-1)}^{n-1}\dim\ker L_{\mathbf{A},q}=
 \begin{cases}
  2n-2,&n~\text{even},\\
  2n-1,&n~\text{odd}.
 \end{cases}
\]
Therefore, such $\mathbf{A}$ is a good tuple, and Theorem \ref{thm:boundary} holds.
\end{proof}

\section{The case \texorpdfstring{$(2n-3,n+1)$}{(2n-3,n+1)}}\label{sec:additional-case}

Throughout this section, assume that $n\ge 3$ and $k=2n-3$. By Theorem \ref{thm:boundary}, a good tuple exists for $(k,n)=(2n-3,n)$. The goal of this section is to prove the following theorem.

\begin{theorem}\label{thm:additional-case}
A good tuple exists for $(2n-3,n+1)$.
\end{theorem}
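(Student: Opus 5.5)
We work in $\Mat_{n+1}$ with $k=2n-3$ and aim for $\dim\ker L_{\mathbf{A}}=2n-2$ when $n+1$ is even and $2n-1$ when $n+1$ is odd. The key observation is that $k=2(n+1)-5$ in dimension $n+1$. So we keep the grading of Subsection~3.1 on $\Mat_{n+1}$, with degrees $q\in[-n,n]$. We restrict to tuples $\mathbf{A}=(A_{-(n-2)},\dots,A_{n-2})$ with $A_i\in\mathcal{M}_i\subseteq\Mat_{n+1}$. The degrees $\pm(n-1)$ are now omitted.

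As in Subsection~3.1, $L_{\mathbf{A}}$ decomposes into blocks $L_{\mathbf{A},q}$, and Lemma~\ref{lem:block-trace-rank} still holds. We choose the target block nullities so that they sum to $k+1$ or $k+2$. Since $\sum_q\dim\mathcal{M}_q=(n+1)^2$, we first fix a pattern of block ranks compatible with alternation. Degrees $\pm n$ have kernel dimension $1$ each, because the blocks are $1$-dimensional and no product of the $A_i$ of total degree $0$ pairs them nontrivially. For the other degrees we determine the required nullities by counting.

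For the blocks with $q\ne0$ we embed the configuration of Subsection~\ref{subsec:nonzero-block}: a $q$-forest on $\{0,\dots,n-1\}$ placed in the upper-left $n\times n$ corner of $\Mat_{n+1}$. The extra vertex $n$ is attached only through entries $E_{a,a+q}$ and $E_{b+q,b}$ with $a+q=n$ or $b+q=n$. The Eulerian-circuit criterion of Proposition~\ref{prop:nonzero-block} still makes the pairing matrix diagonal. The new diagonal entries, those involving vertex $n$, vanish because vertex $n$ has no edges from $\mathbf{A}$, so these directions lie in the kernel. We then count the resulting kernel dimensions and compare them with the target. If the count exceeds the target, we replace the $q$-forest by a spanning forest on $\{0,\dots,n\}$ with two components. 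It uses one edge of each length $1,\dots,n-2$ and has the separation property for pairs $\{a,a+q\}$, and we adapt Lemma~\ref{lem:valuation} with $M$ enlarged by one. Lemma~\ref{lem:bidirected-tree} then gives nonzero diagonal entries, and these give the block ranks.

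For the degree-zero block we start from the family $\mathbf{A}(t)$ of Subsection~\ref{subsec:zero-block} embedded in the corner. This already gives rank $2s$ on $W=\Span\{Z_1,\dots,Z_s,Z_{r(1)},\dots,Z_{r(s)}\}$. The new diagonal matrix $Z_n=E_{n,n}$, together with the parity change of $n$, may require one more rank-$2$ pair. To obtain it we perturb by adding $\varepsilon E_{l,n}$ to some $A_q$ and $\varepsilon E_{n,l'}$ to some $A_{-q}$ of matching degree, so that vertex $n$ joins an odd-length circuit through $0$. We then show, by a Lemma~\ref{lem:odd-circuit}-type computation, that $w_{\mathbf{A},0}(Z_n,Z_j)$ has a nonzero lowest-order term in $\varepsilon$ for a suitable $j$, for instance $j=0$ or the middle index when $n$ is odd. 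Taking leading orders in $(t,\varepsilon)$ gives a block-triangular determinant that is nonzero.

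Finally we intersect the nonempty Zariski-open sets over all $q$, as in Subsection~\ref{subsec:proof-of-thm-boudary}. The main obstacle is the zero-degree block. We must check that the perturbation is degree-homogeneous, that it does not destroy the first-order structure from Proposition~\ref{prop:zero-block-first-order}, and that the signed Eulerian sum it creates is nonzero rather than cancelled by the swapping argument of Lemma~\ref{lem:odd-circuit}. The first thing to try is to make the new circuit through $n$ have even length, so that all Eulerian circuits have the same sign.
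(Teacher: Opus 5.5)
Your overall framework matches the paper's: you grade $\Mat_{n+1}$ by diagonals, restrict to $\mathcal{H}'$, and intersect blockwise Zariski-open sets. However, the block-level constructions contain genuine gaps.

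\textbf{Nonzero-degree blocks.} Your claim that degrees $\pm n$ have nullity $1$ is false, and this breaks the count.
\begin{enumerate}
\item For $\mathbf{A}'=\operatorname{diag}(\mathbf{A},0)$ one has $E_{0,n}A_i'=0$. Hence $L_{\mathbf{A}'}(E_{0,n})=s_{2n-3}(\mathbf{A})E_{0,n}=g_0(\mathbf{A})E_{0,n}$, which is generically nonzero. The degree-$0$ product $s_{2n-3}(\mathbf{A})$ does pair $E_{n,0}$ with $E_{0,n}$.
\item The forced nullities already add up exactly to the target $k+1$ resp.\ $k+2$. These are $A_q\in\ker L_{\mathbf{A}',q}$ for $1\le|q|\le n-2$, plus $2$ or $3$ in degree $0$ from $I$, $A_0$ and parity. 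So every block of degree $\pm(n-1),\pm n$ must be injective, and nullity $1$ at $\pm n$ makes the target unreachable.
\item Your fallback for $1\le q\le n-2$ is impossible as stated. A forest on the $n+1$ vertices $\{0,\dots,n\}$ with one edge of each length $1,\dots,n-2$ has $n-2$ edges, hence at least three components, not two.
\item With three components, bidirected single-matrix-unit configurations still give a diagonal pairing. An entry survives only if $F\cup\{a,a+q\}$ is a tree on its non-isolated vertices. One checks that for $n-q\ge5$ at least two entries always vanish: the length-$q$ edge itself, plus pairs through an isolated vertex or inside the large component.
\end{enumerate}
The missing idea is to stop working at a special point. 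Take $\mathbf{A}$ generic in $\mathcal{H}$ and $\mathbf{A}'=\operatorname{diag}(\mathbf{A},0)$. Then $\mathcal{M}_q'=\mathcal{M}_q\oplus\mathbb{R}E_{n-q,n}$ is $L_{\mathbf{A}'}$-stable, and $E_{n-q,n}$ is an eigenvector with eigenvalue $g_{n-q}(\mathbf{A})$. This is a nonzero polynomial by Lemma~\ref{lem:g_a-nonzero}. It does vanish at a $q$-forest point whenever both components of $F$ carry edges; that, and not the absence of $\mathbf{A}$-edges at $n$, is why you saw the kernel grow. Intersecting $\{g_{n-q}\ne0\}$ with the nonempty open set from Proposition~\ref{prop:nonzero-block} finishes these blocks.

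\textbf{Zero-degree block and $n=3$.} In your perturbation of $A_{\pm q}$, degree-homogeneity forces $l=l'=n-q$, so vertex $n$ sits on a $2$-cycle $l\leftrightarrow n$. At $t=0$, the $\varepsilon^2$-coefficient of $w(Z_h,Z_n)$ then vanishes in every case:
\begin{enumerate}
\item For $l\ne h$, the excursion $0\to l\to n\to n\to l\to0$ has odd length $5$, and swapping it with $0\to h\to h\to0$ cancels everything.
\item For $l=h$, the two placements of the loop at $h$ give opposite signs.
\item For $q=h$ or $l=n-1$, the graph is disconnected.
\end{enumerate}
Your alternative $j=0$ is useless, since $Z_0=A_0\in\ker L$ for all parameters. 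What is actually needed is the following. First, $Z_h$ and $Z_n$ lie in the radical of the unperturbed form (Lemma~\ref{lem:Z_h-radical}), so the new rank must come from $w(Z_h,Z_n)$. Second, the perturbation must have \emph{non-opposite} degrees: add $\lambda E_{h,n}$ to $A_{h+1}$ and $\lambda E_{n,h+1}$ to $A_{-h}$. This kills the $\lambda$-linear term by degree balance at $n$ and produces the even excursion $0\to h\to h\to n\to n\to h+1\to0$, which realizes your closing suggestion. A $\lambda^4$ determinant expansion then gives the extra rank $2$.

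Finally, you never treat $n=3$, i.e.\ $(k,n+1)=(3,4)$. There $\mathbf{A}(t)$ is undefined because $s=0$. In fact $L_{\mathbf{A}',0}=0$ for every graded tuple in $\mathcal{H}'$, because diagonal matrices commute. So no graded tuple is good, and a separate non-graded construction in $\Mat_4$ is required.
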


\subsection{Degree blocks in dimension \texorpdfstring{$n+1$}{n+1}}

For matrices in $\Mat_{n+1}(\mathbb{R})$, we index rows and columns by $0,1,\cdots,n$. For $-n\le q\le n$, set
\[
\mathcal{M}_q'=\Span\{E_{a,a+q}\mid 0\le a,a+q\le n\}
\subset\Mat_{n+1}(\mathbb{R}).
\]
Then $\dim \mathcal{M}_q'=n+1-|q|$ and $\Mat_{n+1}(\mathbb{R})=\bigoplus_{q=-n}^{n}\mathcal{M}_q'$.
Put $\mathcal{H}'=\prod_{i=-(n-2)}^{n-2}\mathcal{M}_i'$.
For a tuple $\mathbf{A}'=(A_{-(n-2)}',\cdots,A_{n-2}')\in\mathcal{H}'$, we have
\[
L_{\mathbf{A}'}=\bigoplus_{q=-n}^{n}L_{\mathbf{A}',q},\quad
L_{\mathbf{A}',q}=L_{\mathbf{A}'}|_{\mathcal{M}_q'}.
\]
For $-n\le q\le n$, define
\[
w_{\mathbf{A}',q}:\mathcal{M}_{-q}'\times \mathcal{M}_q'\longrightarrow\mathbb{R},\quad (X,Y)\longmapsto\tr\left(XL_{\mathbf{A}',q}(Y)\right).
\]
The same argument of Lemma \ref{lem:block-trace-rank}, \ref{lem:block-nullity-lower-bound} gives
\begin{equation}\label{eq:additional-opposite-block-ranks}
    \rank w_{\mathbf{A}',q}=\rank L_{\mathbf{A}',q},\quad\rank L_{\mathbf{A}',-q}=\rank L_{\mathbf{A}',q}
\end{equation}
and
\begin{equation}\label{eq:block-nullity-lower-bound-additional}
    \dim\ker L_{\mathbf{A}',q}\ge
\begin{cases}
  0,&q=\pm(n-1),\pm n;\\
  1,&q\ne 0,\pm(n-1),\pm n;\\
  2,&q=0,~n~\text{odd};\\
  3,&q=0,~n~\text{even}.
\end{cases}
\end{equation}
For fixed $q$, those tuples $\mathbf{A}'\in\mathcal{H}'$ which make equality hold in \eqref{eq:block-nullity-lower-bound-additional} form a Zariski-open subset of $\mathcal{H}'$. We will show the subset is nonempty. 

\subsection{The nonzero-degree blocks}

Let $\mathcal{H}=\prod_{i=-(n-2)}^{n-2}\mathcal{M}_i$ be the real affine space defined in Section \ref{sec:boundary}. For a tuple $\mathbf{A}=(A_{-(n-2)},\cdots,A_{n-2})\in \mathcal{H}$, we have $s_{2n-3}(\mathbf{A})\in\mathcal{M}_0$. Write
\[
s_{2n-3}(\mathbf{A})=\operatorname{diag}
\{
g_0(\mathbf{A}),\cdots,g_{n-1}(\mathbf{A})
\}
\]
for polynomial functions $g_i$ on $\mathcal{H}$.

\begin{lemma}\label{lem:g_a-nonzero}
For every $0\le i\le n-1$, the polynomial $g_i$ is
nonzero.
\end{lemma}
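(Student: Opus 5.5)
To show $g_i\ne0$, I will specialize $\mathbf{A}$ to a tuple of matrix units in $\mathcal{H}$ and evaluate the $(i,i)$ entry of $s_{2n-3}(\mathbf{A})$ as a signed count of Eulerian paths. The key observation is that $g_i(\mathbf{A})=\tr\bigl(Z_i\, s_{2n-3}(\mathbf{A})\bigr)$ with $Z_i=E_{i,i}$. By Lemma \ref{lem:identity-insertion} (applied with $m=2n-2$, which is even, in the reverse direction) this is not directly a full standard polynomial. I therefore prefer to compute directly. For matrix units, the $(i,i)$ entry of $s_{2n-3}(E_1,\dots,E_{2n-3})$ equals the signed sum over Eulerian circuits of the graph that start and end at vertex $i$. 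I will enumerate these as permutations $\sigma$ whose product path begins at $i$ and ends at $i$.

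For the specialization, fix a rooted tree $(T,i)$ on $\{0,\dots,n-1\}$ that has, for each $1\le m\le n-2$, exactly one edge of length $m$. Put $A_m=E_{u_m,v_m}$ and $A_{-m}=E_{v_m,u_m}$ for that edge, and drop the need for $A_0$ by setting $A_0=E_{i,i}$. Then the graph is exactly $\overleftrightarrow{T}_i$ with $2(n-2)+1=2n-3$ edges. Such a $T$ has only $n-2$ edges, though, so it is a forest with two components on $n$ vertices, and vertices outside the component of $i$ carry edges that the circuit cannot reach. To fix this, I will instead choose a forest $F$ in which one component is the single vertex $n-1$ (or whichever vertex is isolated) and the other is a spanning tree of the remaining $n-1$ vertices. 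This requires lengths $1,\dots,n-2$ to each occur exactly once on $n-1$ vertices, i.e.\ a graceful-type tree on $\{0,\dots,n-2\}$. The star at $0$ with edges $\{0,m\}$ works.

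\textbf{Case $i\le n-2$.} Take $T$ to be the star at $0$ on $\{0,\dots,n-2\}$, rooted at $i$, with $A_0=E_{i,i}$. Closed walks from $i$ to $i$ using every edge exactly once are exactly the Eulerian circuits of $\overleftrightarrow{T}_i$ read starting at vertex $i$. Grouping them by rotation, and arguing as in Lemma \ref{lem:trace-euler} and Lemma \ref{lem:bidirected-tree}, the relevant circuits all have the same sign, as in \cite[Lemma 2.5]{BrassilReichstein}. Their number is $\deg_T(i)!\prod_{v\ne i}(\deg_T(v)-1)!$ times the number of starting positions at $i$, which is $\deg_T(i)+1$ because the loop is counted. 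For a circuit of odd length, rotations have equal sign, so there is no cancellation. This gives $g_i\ne0$.

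\textbf{Case $i=n-1$.} Use the reflection $x\mapsto n-1-x$, i.e.\ conjugation by the antidiagonal permutation matrix. This maps $\mathcal{M}_q$ to $\mathcal{M}_{-q}$, so it preserves $\mathcal{H}$ up to relabeling the tuple, which only changes $s_{2n-3}$ by a sign. It sends $g_{n-1}$ to $\pm g_0$, so $g_{n-1}\ne0$.

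The main obstacle is the sign-coherence claim. A circuit read from a fixed vertex $i$ can pass through $i$ several times, and I must verify that all closed Eulerian walks based at $i$, not only those beginning with the loop, carry the same sign. I would handle this by noting that each such walk is a rotation of one that starts with the loop, with the rotation taken at a visit to $i$. Since the total length $2n-3$ is odd, Lemma \ref{lem:trace-euler} shows that every rotation preserves the sign.
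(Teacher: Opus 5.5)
Your proof is correct and follows the paper's approach. You specialize to the bidirected star on $\{0,\dots,n-2\}$ plus a loop, read $g_i$ as a signed count of Eulerian circuits based at $i$, and get sign-coherence from the circuits beginning with the loop together with the fact that rotations of an odd-length circuit preserve sign. Your reflection $x\mapsto n-1-x$ for $i=n-1$ produces exactly the paper's second specialization. The only differences are cosmetic: you place the loop at $i$ instead of at $0$, and you spell out the rotation step that the paper leaves implicit.
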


\begin{proof}
First assume that $0\le i\le n-2$. Specialize $A_0=E_{0,0}$ and $A_j=E_{0,j},~A_{-j}=E_{j,0}$ for $1\le j\le n-2$. The edges of the corresponding directed graph $\Gamma$ are $0\to j,~j\to0$ for $1\le j\le n-2$ and the loop $0\to0$. The vertex $n-1$ is isolated. By the property of the multiplication of matrix units, up to a sign $g_i(\mathbf{A})$ is equal to the sum of the signs of all Eulerian circuits of $\Gamma$ starting from the vertex $i$, which is nonzero by Lemma \ref{lem:trace-euler} and \ref{lem:bidirected-tree}.

For $i=n-1$, specialize $A_0=E_{n-1,n-1}$ and $A_j=E_{n-1-j,n-1},~A_{-j}=E_{n-1,n-1-j}$ for $1\le j\le n-2$.
Now the edges of the corresponding directed graph are $n-1\to j,~j\to n-1$ for $1\le j\le n-2$ and the loop $n-1\to n-1$. The vertex $0$ is isolated. The same argument
gives $g_{n-1}(\mathbf{A})\ne0$.
\end{proof}

For $A_i\in\mathcal{M}_i$, set 
\begin{equation}\label{eq:nonzero-diagonal-embed}
A_i'=\operatorname{diag}\{A_i,0\}
\in\mathcal{M}_i'
\end{equation}
and $\mathbf{A}'=(A_{-(n-2)}',\cdots,A_{n-2}')\in\mathcal{H}'$. For $1\le q\le n$, we have the natural decomposition
\begin{equation}\label{eq:nonzero-degree-block-decomposition}
    \mathcal{M}_q'=\mathcal{M}_q\oplus\mathbb{R}E_{n-q,n}.
\end{equation}
Here we set $\mathcal{M}_n=0$.

\begin{lemma}\label{lem:nonzero-degree-block-decomposition}
    For $1\le q\le n$, the operator $L_{\mathbf{A}',q}$ preserves the decomposition \eqref{eq:nonzero-degree-block-decomposition} and under the natural identification $\mathbb{R}E_{n-q,n}\cong\mathbb{R}$, we have
    \[
     L_{\mathbf{A}',q} = L_{\mathbf{A},q}\oplus\left(v\mapsto g_{n-q}(\mathbf{A})v\right).
    \]
\end{lemma}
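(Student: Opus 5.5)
We compute $L_{\mathbf{A}',q}$ directly on the two summands of \eqref{eq:nonzero-degree-block-decomposition}, using only the block shape $A_i'=\operatorname{diag}\{A_i,0\}$. Write $P=\operatorname{diag}\{I_n,0\}$ and identify $\Mat_n(\mathbb{R})$ with $P\Mat_{n+1}(\mathbb{R})P$.

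\textbf{The summand $\mathcal{M}_q$.} For $Y\in\mathcal{M}_q$, embedded as $\operatorname{diag}\{Y,0\}$, every matrix $A_1',\dots,A_k',Y$ lies in the subalgebra $P\Mat_{n+1}P\cong\Mat_n$. Hence each monomial of $s_{2n-2}(\mathbf{A}',Y)$ equals the corresponding monomial for $(\mathbf{A},Y)$, padded by a zero row and column. This gives $L_{\mathbf{A}',q}(Y)=\operatorname{diag}\{L_{\mathbf{A},q}(Y),0\}\in\mathcal{M}_q$. So $\mathcal{M}_q$ is preserved and the restriction is $L_{\mathbf{A},q}$.

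\textbf{The summand $\mathbb{R}E_{n-q,n}$.} Set $Y=E_{n-q,n}$. Its column index $n$ is killed by right multiplication with any $A_i'$, since $E_{n,*}A_i'=0$ (row $n$ of $A_i'$ is zero). Thus any monomial in which $Y$ is not in the last position vanishes, because $Y A_i'=0$. In the surviving monomials $Y$ sits last. The sign of $\sigma\in S_{2n-2}$ with $\sigma(2n-2)=2n-2$ equals the sign of its restriction to $S_{2n-3}$. Therefore
\[
L_{\mathbf{A}'}(Y)=s_{2n-3}(A_1',\dots,A_k')\,Y=\operatorname{diag}\{s_{2n-3}(\mathbf{A}),0\}E_{n-q,n}.
\]
Since $s_{2n-3}(\mathbf{A})=\operatorname{diag}\{g_0(\mathbf{A}),\dots,g_{n-1}(\mathbf{A})\}$ and $n-q\le n-1$ because $q\ge1$, the product is $g_{n-q}(\mathbf{A})E_{n-q,n}$. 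So this line is preserved, and the operator acts on it as multiplication by $g_{n-q}(\mathbf{A})$. The degree bookkeeping also confirms the output lies in $\mathcal{M}_q'$.

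\textbf{Remaining checks.} Here $k=2n-3$ and $s_{k+1}=s_{2n-2}$; the ordering of the tuple $\mathbf{A}$ is fixed consistently in both sizes, so no sign is introduced. For $q=n$ we have $\mathcal{M}_n=0$ and only the line $\mathbb{R}E_{0,n}$ remains, with eigenvalue $g_0(\mathbf{A})$. The only point needing care is the vanishing of monomials with $Y$ not last; it rests solely on row $n$ of each $A_i'$ being zero. No real obstacle is expected.
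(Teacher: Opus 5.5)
Your proof is correct and follows the paper's argument. The block embedding identifies $L_{\mathbf{A}',q}$ on $\mathcal{M}_q$ with $L_{\mathbf{A},q}$, and $E_{n-q,n}A_i'=0$ forces $E_{n-q,n}$ into the last slot, so $L_{\mathbf{A}'}(E_{n-q,n})=s_{2n-3}(\mathbf{A}')E_{n-q,n}=g_{n-q}(\mathbf{A})E_{n-q,n}$; you simply spell out the sign bookkeeping that the paper leaves implicit.

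One small notational slip: the relevant vanishing is $E_{a,n}A_i'=0$ (the column index $n$ meets the zero row $n$ of $A_i'$), not ``$E_{n,*}A_i'=0$''. The reason you give in parentheses is the correct one.
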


\begin{proof}
    The first assertion follows from the property of the multiplication of matrix units and \eqref{eq:nonzero-diagonal-embed}. For the second assertion, note that $E_{n-q,n}A_i'=0$ for $-(n-2)\le i\le n-2$, and therefore
    \begin{align*}
         L_{\mathbf{A}'}(E_{n-q,n})&=\sum_{\sigma\in S_{2n-3}} \sgn(\sigma)A_{\sigma(-(n-2))}'\cdots A_{\sigma(n-2)}'E_{n-q,n}\\
         &=s_{2n-3}(\mathbf{A}')E_{n-q,n}\\
         &=\operatorname{diag}\{g_0(\mathbf{A}),\cdots,g_{n-1}(\mathbf{A}),0\}E_{n-q,n}\\
         &=g_{n-q}(\mathbf{A})E_{n-q,n}.
    \end{align*}
\end{proof}

\begin{proposition}\label{prop:additional-nonzero-nonempty}
For every $-n\le q\le n,~q\ne0$, there exists a tuple $\mathbf{A}'\in\mathcal{H}'$ which makes equality hold in \eqref{eq:block-nullity-lower-bound-additional}.
\end{proposition}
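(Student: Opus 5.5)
By \eqref{eq:additional-opposite-block-ranks} it suffices to treat $1\le q\le n$. We use the diagonal embedding \eqref{eq:nonzero-diagonal-embed} of tuples $\mathbf{A}\in\mathcal{H}$ into $\mathcal{H}'$ together with Lemma \ref{lem:nonzero-degree-block-decomposition}. That lemma gives
\[
\dim\ker L_{\mathbf{A}',q}=\dim\ker L_{\mathbf{A},q}+\begin{cases}0,&g_{n-q}(\mathbf{A})\ne0,\\1,&g_{n-q}(\mathbf{A})=0.\end{cases}
\]
So we need a single $\mathbf{A}\in\mathcal{H}$ such that $L_{\mathbf{A},q}$ attains equality in \eqref{eq:block-nullity-lower-bound} and, simultaneously, $g_{n-q}(\mathbf{A})\ne0$.

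The sets
\[
\{\mathbf{A}\in\mathcal{H}: g_{n-q}(\mathbf{A})\ne0\}\quad\text{and}\quad\{\mathbf{A}\in\mathcal{H}: \text{equality holds in \eqref{eq:block-nullity-lower-bound} for }q\}
\]
are both Zariski-open in $\mathcal{H}$. The first is nonempty by Lemma \ref{lem:g_a-nonzero}. The second is nonempty by Proposition \ref{prop:nonzero-block}, together with the explicit $q=n-1$ construction, when $1\le q\le n-1$. When $q=n$ we have $\mathcal{M}_n=0$, so the second set is all of $\mathcal{H}$. Since $\mathcal{H}$ is an irreducible affine space, the two sets intersect, and we pick $\mathbf{A}$ in the intersection.

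For this $\mathbf{A}$ the kernel dimensions match the required values:
\begin{itemize}
\item For $1\le q\le n-2$: $\dim\ker L_{\mathbf{A}',q}=1+0=1$.
\item For $q=n-1$: $\dim\ker L_{\mathbf{A}',n-1}=0+0=0$.
\item For $q=n$: $\mathcal{M}_n'=\mathbb{R}E_{0,n}$ and the operator is multiplication by $g_0(\mathbf{A})\ne0$, so the kernel is $0$.
\end{itemize}
These agree with \eqref{eq:block-nullity-lower-bound-additional}. Finally, by \eqref{eq:additional-opposite-block-ranks}, the negative degrees $q$ follow from the positive ones.

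The main point to check is the block decomposition claim, namely that $L_{\mathbf{A}',q}$ maps $\mathcal{M}_q$ into itself with no component along $E_{n-q,n}$. For $Y\in\mathcal{M}_q\subset\Mat_n$ padded by zeros, every product of the $A_i'$ with $Y$ stays inside the upper-left $n\times n$ corner. For $E_{n-q,n}$, the only surviving terms are those with $E_{n-q,n}$ placed last. Both facts are given by Lemma \ref{lem:nonzero-degree-block-decomposition}, so no real obstacle remains beyond the genericity intersection.
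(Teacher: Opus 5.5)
Your proof is correct and follows the paper's argument. You reduce to $1\le q\le n$, intersect the nonempty Zariski-open sets $\{g_{n-q}\ne0\}$ (Lemma~\ref{lem:g_a-nonzero}) and the equality locus for $L_{\mathbf{A},q}$ in the irreducible space $\mathcal{H}$, and then apply Lemma~\ref{lem:nonzero-degree-block-decomposition} to the diagonally embedded tuple; your separate handling of $q=n-1$ (the construction following Proposition~\ref{prop:nonzero-block}) and of $q=n$ (where $\mathcal{M}_n=0$) is slightly more explicit than the paper's single citation of Proposition~\ref{prop:nonzero-block}.
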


\begin{proof}
By \eqref{eq:additional-opposite-block-ranks}, we can assume $1\le q\le n$. By Lemma \ref{lem:block-nullity-lower-bound} and Proposition \ref{prop:nonzero-block}, those tuples $\mathbf{A}\in\mathcal{H}$ which make equality hold in \eqref{eq:block-nullity-lower-bound} form a nonempty Zariski-open subset of $\mathcal{H}$. By Lemma \ref{lem:g_a-nonzero}, the condition $g_{n-q}(\mathbf{A})\ne0$ also defines a nonempty Zariski-open subset of $\mathcal{H}$. Since $\mathcal{H}$ is irreducible, these two nonempty open subsets intersect. For a tuple $\mathbf{A}$ in the intersection, let $\mathbf{A}'\in\mathcal{H}'$ be the associated tuple as in \eqref{eq:nonzero-diagonal-embed}. Then by Lemma \ref{lem:nonzero-degree-block-decomposition}, for this $\mathbf{A}'$, the equality in \eqref{eq:block-nullity-lower-bound-additional} holds.
\end{proof}

\subsection{The zero-degree block}\label{subsec:additional-zero}

Throughout this subsection, we assume $n\ge4$. Set $s=\left\lfloor\frac{n-2}{2}\right\rfloor>0$ and a reflection $r(l)=n-1-l$ as in Subsection \ref{subsec:zero-block}. Let $\mathbf{A}(t)$ be the family of tuples in $\mathcal{H}$ parametrized by $t\in\mathbb{R}$ specialized as in \eqref{eq:zero-block-family} and \eqref{eq:zero-block-middle-pair}. For $0\le i\le n-1$, put $Z_i=E_{i,i}$ and let
$W=\Span\{Z_1,\cdots,Z_s,Z_{r(1)},\cdots,Z_{r(s)}\}\subseteq\mathcal{M}_0$. Let $M(t)$ be the matrix of $w_{\mathbf{A}(t),0}|_W$ under the ordered basis $Z_1,\cdots,Z_s,Z_{r(1)},\cdots,Z_{r(s)}$. By Proposition \ref{prop:zero-block-first-order} and the discussion below it, $\det M(t)$ is a nonzero polynomial in $\mathbb{R}[t]$.

If $n$ is even, choose $t_0\in\mathbb{R}$ such that $\det M(t_0)\ne0$. Specialize $\mathbf{A}=\mathbf{A}(t_0)$ and let $\mathbf{A}'\in\mathcal{H}'$ be the associated tuple as in \eqref{eq:nonzero-diagonal-embed}. Then
\[
\rank w_{\mathbf{A}',0}|_W=\rank w_{\mathbf{A},0}|_W=n-2
\]
and hence
\[
\dim\ker L_{\mathbf{A}',0}\le n+1-(n-2)=3.
\]
Therefore the equality in \eqref{eq:block-nullity-lower-bound-additional} holds.

In the remaining part of this subsection, we assume $n\ge5$ is odd. Then $s=\frac{n-3}{2}$. We also set $h=s+1=\frac{n-1}{2}$.

\begin{lemma}\label{lem:Z_h-radical}
For every $t\in\mathbb{R}$, we have $Z_h\in\ker L_{\mathbf{A}(t),0}$.
\end{lemma}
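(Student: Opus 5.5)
The plan is to show that $L_{\mathbf{A}(t),0}(Z_h)=0$ by pairing it against all of $\mathcal{M}_0$. Since $Z_h\in\mathcal{M}_0$ and $L_{\mathbf{A}(t)}$ preserves degrees, $L_{\mathbf{A}(t),0}(Z_h)$ lies in $\mathcal{M}_0$. The trace pairing $\mathcal{M}_0\times\mathcal{M}_0\to\mathbb{R}$ is perfect, so it suffices to prove $w_{\mathbf{A}(t),0}(Z_i,Z_h)=0$ for every $0\le i\le n-1$ and every $t$.

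By \eqref{eq:cyclic-trace-identity}, each such value is, up to the nonzero constant $-(2n-1)$, equal to $\tr s_{2n-1}(\mathbf{A}(t),Z_i,Z_h)$. Next we expand by multilinearity, splitting each $A_{-l}$ and $A_{r(l)}(t)$ in \eqref{eq:zero-block-family} into matrix units. This writes the value as a polynomial in $t$ whose coefficients are sums of terms $\tr s_{2n-1}(E_1,\dots,E_{2n-1})$ with the $E_j$ matrix units. By Lemma \ref{lem:trace-euler}, each such term is the signed count of Eulerian circuits of a directed graph $\Gamma$. So it is enough to show that every such signed count vanishes.

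Every such $\Gamma$ contains the loop $A_0\colon 0\to 0$, the loops $Z_i\colon i\to i$ and $Z_h\colon h\to h$, and the edges $A_h\colon 0\to h$ and $A_{-h}\colon h\to 0$ from \eqref{eq:zero-block-middle-pair}. The remaining edges avoid $h$, because $h=\frac{n-1}{2}$ is not of the form $l$ or $r(l)$ for $1\le l\le s$, and these edges never touch $h$. We treat three cases.
\begin{itemize}
\item If $i=h$, the loop $h\to h$ is repeated, so the sum vanishes by \cite[Lemma 2.2]{BrassilReichstein}.
\item If $i=0$, the loop $0\to 0$ is repeated (from $A_0$ and $Z_0$), and the same lemma applies.
\item Otherwise $i\ne 0,h$. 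The only edges at $h$ are then $0\to h$, $h\to h$ and $h\to 0$. Hence in every Eulerian circuit the three edges $A_h,Z_h,A_{-h}$ occur consecutively, forming a closed return $0\to h\to h\to 0$ of length $3$.
\end{itemize}

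In the third case we build a sign-reversing involution. Use Lemma \ref{lem:trace-euler} with $j$ chosen to be some edge other than $A_0,A_h,Z_h,A_{-h}$; such an edge exists since $n\ge5$. Then every circuit $w$ starting at $e_j$ decomposes into consecutive returns to $0$. The loop $A_0$ (length $1$) and the block $(A_h,Z_h,A_{-h})$ (length $3$) are two distinct returns at $0$, neither containing $e_j$. Define $w'$ by exchanging the positions of these two blocks; this gives another Eulerian circuit starting at $e_j$, and the map $w\mapsto w'$ is an involution without fixed points. Moving a block of length $a$ past a middle segment of length $m$ and a block of length $b$ changes the sign by $(-1)^{ab+am+bm}$. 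For $a=1$, $b=3$ this is $(-1)^{3+4m}=-1$, so $\sgn(w')=-\sgn(w)$ and the sum vanishes.

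The main point needing care is checking that the swap is well defined, namely that it preserves the starting edge and produces a valid Eulerian circuit. This holds because both blocks begin and end at vertex $0$, and $e_j$ lies outside both of them. With that verified, $w_{\mathbf{A}(t),0}(Z_i,Z_h)=0$ for all $i$ and all $t$, hence $Z_h\in\ker L_{\mathbf{A}(t),0}$.
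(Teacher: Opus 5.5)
Your proposal is correct and follows the paper's proof. You reduce to $w_{\mathbf{A}(t),0}(Z_i,Z_h)=0$, expand into matrix units, and cancel Eulerian circuits in pairs by swapping the two odd-length closed walks $0\to0$ and $0\to h\to h\to0$ at vertex $0$. Your explicit handling of $i\in\{0,h\}$ and your choice of a base edge outside both blocks make the swap's well-definedness more precise than the paper's phrase ``circuits starting from $0$''.
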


\begin{proof}
It suffices to show $w_{\mathbf{A}(t),0}(Z_j,Z_h)=0$ for every $0\le j\le n-1$. We can assume $j\ne h$ because $w_{\mathbf{A}(t),0}$ is alternating. Expand the two-term matrices in \eqref{eq:zero-block-family} into matrix units as in the proof of Proposition \ref{prop:zero-block-first-order}. By Lemma \ref{lem:trace-euler}, we convert the problem to the sum of the signs of Eulerian circuits starting from $0$ of some directed graphs. Each graph contains two circuits $0\to0$ and $0\to h\to h\to0$, whose numbers of edges are odd. Moreover, any Eulerian circuit starting from $0$ must contain the two circuits. If we exchange the two circuits, we will get a new Eulerian circuit with opposite sign, which makes the entire sum zero.
\end{proof}

We now pass to dimension $n+1$. Let $\mathbf{A}'(t)\in\mathcal{H}'$ be as in \eqref{eq:nonzero-diagonal-embed}. We introduce a new parameter $\lambda\in\mathbb{R}$. Define
\[
\mathbf{A}'(t,\lambda)\in \mathcal{H}'
\]
by $A_i'(t,\lambda)=A_i'(t)$ for all $i$ except for the following two positions
\[
A_{-h}'(t,\lambda)=A_{-h}'+\lambda E_{n,h+1},\quad A_{h+1}'(t,\lambda)=A_{h+1}'(t)+\lambda E_{h,n}.
\]
In particular, we have $\mathbf{A}'(t,0)=\mathbf{A}'(t)$. Put $Z_n=E_{n,n}\in\Mat_{n+1}(\mathbb{R}).$

\begin{lemma}\label{lem:Z_n-radical}
For every $t\in\mathbb{R}$, we have $Z_n\in\ker L_{\mathbf{A}'(t),0}$.
\end{lemma}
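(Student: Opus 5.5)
Since $Z_n=E_{n,n}$ lies in $\mathcal{M}_0'$, it suffices to show $L_{\mathbf{A}'(t)}(Z_n)=0$ directly. I would compute $L_{\mathbf{A}'(t)}(Z_n)=s_{2n-2}(A'_{-(n-2)}(t),\dots,A'_{n-2}(t),Z_n)$ by expanding over permutations and grouping the monomials according to the position of $Z_n$, in the same way as in the proof of Lemma \ref{lem:nonzero-degree-block-decomposition}.

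Every $A_i'(t)=\operatorname{diag}\{A_i(t),0\}$ has zero last row and zero last column. Hence $A_i'(t)Z_n=0$ and $Z_nA_i'(t)=0$ for all $i$. Any monomial in which $Z_n$ has a neighbour on either side therefore vanishes. Since $2n-3\ge1$ and there are $2n-2$ factors, $Z_n$ always has at least one neighbour, so every monomial vanishes. This gives $L_{\mathbf{A}'(t)}(Z_n)=0$, which is the claim.

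The argument can also be phrased through $w_{\mathbf{A}'(t),0}(Z_j,Z_n)$ and Eulerian circuits: vertex $n$ carries only the loop $Z_n$, with no edge to the rest of the graph, so there is no Eulerian circuit unless everything is the single loop. The direct matrix computation above is already enough, however.

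There is no real obstacle. The only point to check is that the embedding \eqref{eq:nonzero-diagonal-embed} places every $A_i'(t)$ in the upper-left $n\times n$ corner, including the $t$-dependent entries $tE_{l,n-1}$, which stay inside indices $0,\dots,n-1$. The $\lambda$-perturbation is not involved, because the statement concerns $\mathbf{A}'(t)=\mathbf{A}'(t,0)$.
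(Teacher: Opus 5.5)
Your proposal is correct and follows the same approach as the paper, which simply says the claim follows from the block embedding and matrix multiplication. You spell this out: every $A_i'(t)$ has zero last row and column, so $A_i'(t)Z_n=Z_nA_i'(t)=0$, and therefore every monomial of $s_{2n-2}$ containing $Z_n$ vanishes.
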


\begin{proof}
    This follows from block embedding and matrix multiplication.
\end{proof}

\begin{lemma}\label{lem:lambda-coefficient}
For every $X,Y\in\mathcal{M}_0'$, the coefficient of $\lambda$ in $w_{\mathbf{A}'(t,\lambda),0}(X,Y)$ is zero.
\end{lemma}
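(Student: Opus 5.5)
Since $w_{\mathbf{A}'(t,\lambda),0}$ is bilinear and $\mathcal{M}_0'$ is spanned by the diagonal matrix units $Z_0,\dots,Z_n$, I will reduce to $X=Z_i$ and $Y=Z_j$ with $0\le i,j\le n$. By \eqref{eq:cyclic-trace-identity}, up to the nonzero constant $-(2n-1)$, the value $w_{\mathbf{A}'(t,\lambda),0}(Z_i,Z_j)$ equals $\tr s_{2n-1}(\mathbf{A}'(t,\lambda),Z_i,Z_j)$. This expression is multilinear in the entries of the tuple, so I will expand every $A_l'(t,\lambda)$ into matrix units, exactly as in the proof of Proposition \ref{prop:zero-block-first-order}. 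Each resulting term is a monomial in $t,\lambda$ times $\tr s_{2n-1}$ of matrix units. By Lemma \ref{lem:trace-euler}, each such trace equals a signed count of Eulerian circuits of the associated directed graph on $\{0,1,\dots,n\}$.

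The terms contributing to the coefficient of $\lambda$ are exactly those where one of the two $\lambda$-summands is chosen and the other is not. In these terms, precisely one of the edges $n\to h+1$ (from $A_{-h}'$) or $h\to n$ (from $A_{h+1}'$) occurs. The key observation is about vertex $n$. By the block embedding \eqref{eq:nonzero-diagonal-embed}, every matrix unit appearing in $A_l'(t)$ lies in the upper-left $n\times n$ block, so none of these edges touches $n$. The only other edges that can touch $n$ are the loops $Z_i$ and $Z_j$ when $i=n$ or $j=n$, and each such loop adds one to both the indegree and the outdegree of $n$. Consequently, in any graph contributing to the $\lambda$-coefficient, vertex $n$ has $\operatorname{indeg}(n)-\operatorname{outdeg}(n)=\pm1$.

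Such a graph admits no Eulerian circuit, so its signed circuit sum is empty and equals zero. Hence every term of the $\lambda$-coefficient vanishes, and so does the coefficient itself. The same degree-balance argument also shows that the terms containing both $\lambda$-edges are the first ones that can survive, since there $n$ is balanced. This is consistent with the $\lambda^2$-perturbation presumably used afterwards, but it is not needed for this lemma.

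There is no real obstacle here. The only care needed is to check that no matrix unit of $A_l'(t)$, including the $t$-terms $tE_{l,n-1}$ and $E_{n-1,r(l)}$, involves index $n$. This holds because those indices stay at most $n-1$ under \eqref{eq:nonzero-diagonal-embed}.
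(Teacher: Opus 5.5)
Your proof is correct and follows the paper's argument: a term linear in $\lambda$ contains exactly one of the edges $n\to h+1$ or $h\to n$, so vertex $n$ is unbalanced and the graph has no Eulerian circuit. Your extra checks (reducing to $X=Z_i$, $Y=Z_j$, and verifying that the loops $Z_n$ and the embedded matrix units of $A_l'(t)$ do not affect the balance at $n$) spell out what the paper's one-line proof leaves implicit.
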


\begin{proof}
Every term linear in $\lambda$ contains exactly one of the two perturbation edges $E_{n,h+1}$ and $E_{h,n}$. Consequently, for the vertex $n$ of each corresponding directed graph, the outdegree is not equal to the indegree, and hence the graph admits no Eulerian circuit.
\end{proof}

\begin{lemma}\label{lem:t-square-coefficient}
The coefficient of $\lambda^2$ of the polynomial $w_{\mathbf{A}'(0,\lambda),0}(Z_h,Z_n)$ is nonzero.
\end{lemma}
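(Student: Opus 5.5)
The plan is to compute the $\lambda^2$-coefficient of $w_{\mathbf{A}'(0,\lambda),0}(Z_h,Z_n)$ directly as a signed count of Eulerian circuits. By \eqref{eq:cyclic-trace-identity}, up to the nonzero constant $-(k+2)$ the form equals $\tr s_{2n-1}(\mathbf{A}'(0,\lambda),Z_h,Z_n)$. I will expand every two-term matrix into matrix units, as in the proof of Proposition \ref{prop:zero-block-first-order}. By Lemma \ref{lem:trace-euler}, each resulting term is the signed Eulerian-circuit sum of a directed graph on $\{0,\dots,n\}$.

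The first step is to record the relevant matrices at $t=0$. We have $A_0'=E_{0,0}$, $A_l'=E_{0,l}$, $A_{-l}'=E_{l,0}+E_{n-1,r(l)}$, $A_{r(l)}'=E_{0,r(l)}$ and $A_{-r(l)}'=E_{r(l),0}$ for $1\le l\le s$; the $tE_{l,n-1}$ terms disappear. The middle pair is $A_h'=E_{0,h}$ and $A_{-h}'=E_{h,0}$. Since $r(s)=n-1-s=h+1$, we also have $A_{h+1}'=E_{0,h+1}$ and $A_{-(h+1)}'=E_{h+1,0}$.

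A term contributing to $\lambda^2$ must use both perturbation units. So $A_{-h}'$ contributes $n\to h+1$ and $A_{h+1}'$ contributes $h\to n$. For $1\le l\le s$, the choice $n-1\to r(l)$ for $A_{-l}'$ is impossible, because at $t=0$ no edge enters $n-1$, and the degree balance at $n-1$ would fail. Hence $A_{-l}'$ contributes $l\to0$. This shows that exactly one graph $\Gamma$ contributes. Its edges are:
\begin{itemize}
\item the loop $0\to0$;
\item the pairs $0\to l$, $l\to0$ for $l\in\{1,\dots,n-2\}\setminus\{h,h+1\}$, which give $n-4$ two-edge circuits;
\item the six-edge circuit $0\to h$, loop $Z_h$ at $h$, $h\to n$, loop $Z_n$ at $n$, $n\to h+1$, $h+1\to0$.
\end{itemize}
The graph $\Gamma$ has $2n-1$ edges and no repeated edges.

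The second step is to evaluate the signed sum. By Lemma \ref{lem:trace-euler}, it suffices to sum over Eulerian circuits starting with the loop $0\to0$ and multiply by $2n-1$. Such a circuit is a concatenation, in some order, of the $n-3$ return circuits at $0$: the $n-4$ two-edge ones and the six-edge one. Inside each return circuit the traversal is forced. In particular, at $h$ and at $n$ the loop must be taken before leaving, since each of these vertices has a single outgoing non-loop edge. All return circuits have even length, so permuting them preserves the sign, exactly as in the second half of the proof of Lemma \ref{lem:odd-circuit}. Therefore every such circuit has the same sign, and the total is $\pm(2n-1)(n-3)!\ne0$.

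The main point needing care is the claim that no other expansion term contributes to the $\lambda^2$ coefficient. This amounts to the degree-balance check at $n-1$ described above, together with the identifications $r(s)=h+1$ and that the middle pair sits at index $h$. I expect both of these to be quick verifications.
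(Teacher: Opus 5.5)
Your proposal is correct and follows essentially the same route as the paper: you isolate the unique contributing graph by a degree-balance check, and then evaluate its signed Eulerian-circuit sum with the even-block permutation argument of Lemma \ref{lem:odd-circuit}. The only differences are minor. You check balance at the vertex $n-1$, where the paper checks the vertices $l$; these are equivalent. You also carry out explicitly the ``little variation'' of Lemma \ref{lem:odd-circuit} that the paper leaves to the reader, obtaining $\pm(2n-1)(n-3)!$.
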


\begin{proof}
Every term contributing to the coefficient of $\lambda^2$ must contain both perturbation edges $E_{n,h+1}$ and $E_{h,n}$. For $1\le l\le s$ and $A_{-l}'(t,\lambda)=A_{-l}'=E_{l,0}+E_{n-1,r(l)}$, by analyzing the indegree and outdegree of vertices $l$ similarly to the proof of Proposition \ref{prop:zero-block-first-order}, we obtain that only the edges $E_{l,0}$ can contribute to a directed graph admitting an Eulerian circuit. Therefore, the edges of the corresponding directed graph are $0\to0$, $0\to h\to h\to n\to n\to h+1\to0$ and $0\to j,~j\to0$ for $1\le j\le n-2,~j\ne h,h+1$. The vertex $n-1$ is isolated. Result follows from Lemma \ref{lem:odd-circuit} with a little variation.
\end{proof}

\begin{proposition}
For $n\ge5$ odd, there exist $t_0,\lambda_0\in\mathbb{R}$, such that the tuple $\mathbf{A}'(t_0,\lambda_0)\in\mathcal{H}'$ makes equality hold in \eqref{eq:block-nullity-lower-bound-additional}.
\end{proposition}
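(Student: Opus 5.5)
The target is $\dim\ker L_{\mathbf{A}'(t_0,\lambda_0),0}=2$ for $n$ odd, i.e.\ $\rank w_{\mathbf{A}'(t_0,\lambda_0),0}=n-1$ on the $(n+1)$-dimensional space $\mathcal{M}_0'$. By the lower bound \eqref{eq:block-nullity-lower-bound-additional} it suffices to find a subspace of $\mathcal{M}_0'$ on which the restricted form has rank $n-1=2s+2$. The natural candidate is
\[
W'=W\oplus\mathbb{R}Z_h\oplus\mathbb{R}Z_n=\Span\{Z_1,\dots,Z_s,Z_{r(1)},\dots,Z_{r(s)},Z_h,Z_n\},
\]
which has dimension $2s+2$. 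The plan is to show that the determinant of the Gram matrix $N(t,\lambda)$ of $w_{\mathbf{A}'(t,\lambda),0}|_{W'}$ is a nonzero polynomial in $(t,\lambda)$.

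\textbf{Step 1: the $\lambda$-expansion.} By Lemma \ref{lem:lambda-coefficient}, every entry of $N(t,\lambda)$ has the form $N_0(t)+\lambda^2N_2(t)+O(\lambda^3)$.

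\textbf{Step 2: the block structure at $\lambda=0$.} The tuple $\mathbf{A}'(t,0)=\mathbf{A}'(t)$ is block diagonal, so $w_{\mathbf{A}'(t),0}|_W=M(t)$. By Lemmas \ref{lem:Z_h-radical} and \ref{lem:Z_n-radical} (the former transported through the block embedding), the rows and columns of $Z_h$ and $Z_n$ in $N_0(t)$ vanish. Hence
\[
N_0(t)=\operatorname{diag}\{M(t),0_{2\times2}\}.
\]

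\textbf{Step 3: the leading term of the determinant.} Write the $2\times2$ block of $N(t,\lambda)$ on $\{Z_h,Z_n\}$ as $\lambda^2\begin{bmatrix}0&\beta(t)\\-\beta(t)&0\end{bmatrix}+O(\lambda^3)$; it is alternating because $w$ is. The off-diagonal blocks between $W$ and $\{Z_h,Z_n\}$ are also $O(\lambda^2)$. Scaling the last two rows and columns by $\lambda^{-2}$ then gives
\[
\det N(t,\lambda)=\lambda^4\left(\det M(t)\,\beta(t)^2+O(\lambda)\right).
\]
So it is enough that $\det M(t)\,\beta(t)^2$ is not the zero polynomial in $t$.

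\textbf{Step 4: nonvanishing of $\beta$.} Lemma \ref{lem:t-square-coefficient} says exactly that $\beta(0)\ne0$, so $\beta$ is a nonzero polynomial in $t$. The discussion after Proposition \ref{prop:zero-block-first-order} gives $\det M(t)\not\equiv0$. Since $\mathbb{R}[t]$ is a domain, the product $\det M(t)\,\beta(t)^2$ is nonzero, and therefore $\det N(t,\lambda)$ is a nonzero polynomial. Choose $(t_0,\lambda_0)$ with $\det N(t_0,\lambda_0)\ne0$. Then $\rank w_{\mathbf{A}'(t_0,\lambda_0),0}\ge n-1$, which gives $\dim\ker\le2$; equality follows from \eqref{eq:block-nullity-lower-bound-additional}.

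\textbf{Main obstacle.} The delicate point is Step 3. One must check that the cross terms between $W$ and $\{Z_h,Z_n\}$, which are of order $\lambda^2$, do not spoil the leading coefficient. This is handled by writing the determinant after the $\lambda^{-2}$ rescaling, as a Schur complement: the cross terms contribute only at order $\lambda^{5}$ or higher relative to the leading $\lambda^4$ term. Concretely, the rescaled matrix at $\lambda=0$ is
\[
\begin{bmatrix}M(t)&*\\ *&B\end{bmatrix},\qquad B=\begin{bmatrix}0&\beta\\-\beta&0\end{bmatrix},
\]
where the $*$ entries come from the $\lambda^2$ cross terms. After rescaling, the lower-left $*$ block is $O(\lambda^2)$ while the upper-right block is $O(1)$, so at $\lambda=0$ the matrix is block triangular and its determinant is $\det M\cdot\det B$.
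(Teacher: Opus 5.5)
Your proposal is correct and follows the paper's proof essentially verbatim. It uses the same subspace $W'=W\oplus\Span\{Z_h,Z_n\}$ and the same block shape from Lemmas \ref{lem:Z_h-radical}, \ref{lem:Z_n-radical} and \ref{lem:lambda-coefficient}. It also has the same leading term $\lambda^4\det M(t)\,\beta(t)^2$, with $\beta(0)\ne0$ from Lemma \ref{lem:t-square-coefficient}; the paper fixes $t_0$ first and then varies $\lambda$, which is immaterial.

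One small wording fix: in Step 3 you should factor $\lambda^2$ out of the last two columns only, not out of both rows and columns. Your final paragraph in effect already does this.
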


\begin{proof}
Let $W'=W\oplus
\operatorname{Span}\{Z_h,Z_n\}$. Consider $w_{\mathbf{A}'(t,\lambda),0}|_{W'}$. By Lemma \ref{lem:Z_h-radical},~\ref{lem:Z_n-radical} and \ref{lem:lambda-coefficient}, under the standard basis of $W$ followed by $Z_h,Z_n$, the matrix of the bilinear form $w_{\mathbf{A}'(t,\lambda),0}|_{W'}$ has the shape
\[
M'(t,\lambda)=\begin{pmatrix}
M(t)+\lambda^2*
&
\lambda^2*
&
\lambda^2*
\\
\lambda^2*
&
0
&
\lambda^2f(t,\lambda)
\\
\lambda^2*
&
-\lambda^2f(t,\lambda)
&
0
\end{pmatrix},
\]
where $\lambda^2f(t,\lambda)=w_{\mathbf{A}'(t,\lambda),0}(Z_h,Z_n)$ and by Lemma \ref{lem:t-square-coefficient}, we have $f(0,0)\ne0$. Choose $t_0\in\mathbb{R}$ such that $\det M(t_0),f(t_0,0)\ne0$. Then
\[
\det M'(t_0,\lambda)=\det M(t_0)\cdot f(t_0,\lambda)^2\lambda^4(\mod \lambda^5).
\]
In particular, $\det M'(t_0,\lambda)$ is a nonzero polynomial in $\mathbb{R}[\lambda]$, so there exists $\lambda_0\in \mathbb{R}$ such that
\[
\det M'(t_0,\lambda_0)\ne0.
\]
For this specialization, we have
\[
\rank L_{\mathbf{A}'(t_0,\lambda_0),0}=\rank w_{\mathbf{A}'(t_0,\lambda_0),0}\ge\rank w_{\mathbf{A}'(t_0,\lambda_0),0}|_{W'}=n-1.
\]
Equivalently, we have $\dim\ker L_{\mathbf{A}'(t_0,\lambda_0),0}\le2$. Therefore the equality in \eqref{eq:block-nullity-lower-bound-additional} holds.
\end{proof}

\begin{remark}
The assumption $n\ge5$ in the preceding construction is necessary. When $n=3$, so that $(k,n+1)=(3,4)$, for every tuple $\mathbf{A}'=(A_{-1}',A_0',A_1')\in\mathcal{H}'$, we have $L_{\mathbf{A}',0}=0$. Indeed, for $X\in \mathcal{M}_0'$, we have 
\begin{align*}
L_{\mathbf{A}',0}(X)&=s_4(A_{-1}',A_0',A_1',X)\\
&=\left(\sum_{\sigma\in S_3}\sgn(\sigma)A_{\sigma(-1)}'XA_{\sigma(0)}'A_{\sigma(1)}'-\sum_{\sigma\in S_3}\sgn(\sigma)A_{\sigma(-1)}'A_{\sigma(0)}'XA_{\sigma(1)}'\right)\\
&+\left(\sum_{\sigma\in S_3}\sgn(\sigma)A_{\sigma(-1)}'A_{\sigma(0)}'A_{\sigma(1)}'X-\sum_{\sigma\in S_3}\sgn(\sigma)XA_{\sigma(-1)}'A_{\sigma(0)}'A_{\sigma(1)}'\right)\\
&=\left(-\sum_{\sigma\in S_2}\sgn(\sigma)A_0'XA_{\sigma(-1)}'A_{\sigma(1)}'+\sum_{\sigma\in S_2}\sgn(\sigma)A_{\sigma(-1)}'A_{\sigma(1)}'XA_0'\right)\\
&+\left(\sum_{\sigma\in S_2}\sgn(\sigma)A_{\sigma(-1)}'XA_0'A_{\sigma(1)}'-\sum_{\sigma\in S_2}\sgn(\sigma)A_{\sigma(-1)}'A_0'XA_{\sigma(1)}'\right)\\
&+\left(-\sum_{\sigma\in S_2}\sgn(\sigma)A_{\sigma(-1)}'XA_{\sigma(1)}'A_0'+\sum_{\sigma\in S_2}\sgn(\sigma)A_0'A_{\sigma(-1)}'XA_{\sigma(1)}'\right)\\
&=0.
\end{align*}
Here we use that any two matrices in $\mathcal{M}_0'$ commute.
\end{remark}

\subsection{Proof of Theorem \ref{thm:additional-case}}

\begin{proof}[Proof of Theorem \ref{thm:additional-case}]
For $n\ge4$, the result follows from \eqref{eq:block-nullity-lower-bound-additional}, Proposition \ref{prop:additional-nonzero-nonempty} and Subsection \ref{subsec:additional-zero}; see the argument in Subsection \ref{subsec:proof-of-thm-boudary}. 

It remains to consider $n=3$, i.e. $(k,n+1)=(3,4)$. We give an explicit good tuple $\mathbf{A}=(A_1,A_2,A_3)\in\Mat_4(\mathbb{R})^3$. 
Let $A_1=E_{1,1},A_2=E_{0,2}+E_{2,1}+E_{3,0}$ and $A_3=E_{0,1}+E_{1,3}$. We claim that $\ker L_\mathbf{A}=\{I_4,A_1,A_2,A_3\}$. 

Decompose $\Mat_4(\mathbb{R})$ as $\Mat_4(\mathbb{R})=V_0\oplus V_1\oplus V_2 \oplus V_3\oplus V_4$, where
\[
\begin{aligned}
    V_0&=\Span\{E_{0,0},E_{1,1},E_{2,2},E_{3,3}\},\\
    V_1&=\Span\{E_{0,1},E_{1,3},E_{3,2}\},\\
    V_2&=\Span\{E_{0,3},E_{1,2},E_{2,0}\},\\
    V_3&=\Span\{E_{0,2},E_{2,1},E_{3,0}\},\\
    V_4&=\Span\{E_{1,0},E_{2,3},E_{3,1}\}.
\end{aligned}
\]

By definition of $s_4$ and calculation, we find 
$$L_\mathbf{A}(V_0)\subseteq V_4,\quad L_\mathbf{A}(V_1)\subseteq V_0,\quad L_\mathbf{A}(V_2)\subseteq V_1,\quad L_\mathbf{A}(V_3)\subseteq V_2,\quad L_\mathbf{A}(V_4)\subseteq V_3.$$
Hence, we have $\ker L_\mathbf{A}=\bigoplus_{i=0}^4\ker( L_\mathbf{A}|_{V_i})$. 

We now determine these five kernels. Let $X=aE_{0,0}+bE_{1,1}+cE_{2,2}+dE_{3,3}\in V_0$. Then $L_\mathbf{A}(X)=(-a+d)E_{1,0}+(c-d)E_{2,3}+(a-d)E_{3,1}.$
Thus, $L_\mathbf{A}(X)=0$ if and only if $a=c=d$ and $b$ is arbitrary. Therefore, 
$$\ker(L_\mathbf{A}|_{V_0})=\Span\{I_4,E_{1,1}\}=\Span\{I_4,A_1\}.$$

Using the same method to calculate the kernels, we have $$\ker(L_\mathbf{A}|_{V_1})=\Span\{A_3\},\quad\ker(L_\mathbf{A}|_{V_2})=0,\quad\ker(L_\mathbf{A}|_{V_3})=\Span\{A_2\},\quad\ker(L_\mathbf{A}|_{V_4})=0.$$
Therefore, \[ \ker L_\mathbf{A} = \Span\{I_4,A_1,A_2,A_3\}. \]
Since $I_4,A_1,A_2,A_3$ are linearly independent, we have $\dim\ker L_\mathbf{A}=4$.  Thus $\mathbf{A}$ is a good tuple for $(k,n+1)=(3,4)$.
\end{proof}

\section{Dimension extensions}
\label{sec:extensions}

Throughout this section, assume that $n\geq 3$, $k$ is odd and $3\le k\le 2n-3$. The goal of this section is to prove that if a good tuple exists for $(k,n)$, then a good tuple also exists for $(k,n+2)$.

\begin{lemma}\label{lem:det-sm-nonzero}
Let $2\le m\le 2n-2$ be an integer. There exists $\mathbf{A}\in\Mat_n(\mathbb{R})^m$ such that $\det s_m(\mathbf{A})\ne 0$. 
\end{lemma}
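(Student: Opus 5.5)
We need to find, for each $2\le m\le 2n-2$, a tuple $\mathbf{A}\in\Mat_n(\mathbb{R})^m$ with $s_m(\mathbf{A})$ invertible. The plan has three steps: reduce to the extreme value $m=2n-2$, then build that case from the diagonal matrices already computed in Lemma \ref{lem:g_a-nonzero}.

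\textbf{Step 1: reduction to $m=2n-2$.} Set $m'=m$ if $m$ is even and $m'=m+1$ if $m$ is odd. Suppose we have $\mathbf{B}=(B_1,\dots,B_{m'})$ with $\det s_{m'}(\mathbf{B})\ne0$. If $m$ is odd, define $\mathbf{A}=(B_1,\dots,B_m)$. Then
\[
s_{m+1}(B_1,\dots,B_m,I_n)=s_m(\mathbf{A})
\]
by Lemma \ref{lem:identity-insertion} (here $m+1$ is even, but we apply the identity with $m$ in the role of the odd index). Precisely: for odd $m$, Lemma \ref{lem:identity-insertion} gives $s_m(X_1,\dots,X_{m-1},I_n)=s_{m-1}(X_1,\dots,X_{m-1})$, i.e. odd lengths reduce to even lengths. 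So we reduce odd $m$ to $m-1$ rather than to $m+1$. For odd $m\ge3$, take $\mathbf{A}=(B_1,\dots,B_{m-1},I_n)$, where $\mathbf{B}$ works for $m-1$; then $s_m(\mathbf{A})=s_{m-1}(\mathbf{B})$ is invertible. It therefore suffices to treat even $m$.

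\textbf{Step 2: from $m=2n-2$ down to smaller even $m$.} This uses the Zariski-density argument used throughout the paper. The map $\mathbf{A}\mapsto \det s_m(\mathbf{A})$ is a polynomial, so it is enough to show it is not identically zero. For even $m=2p\le 2n-2$, build $\mathbf{A}$ block-diagonally. Partition $n=n_1+\dots+n_r$ with $2n_i-2\ge m$ for every $i$. Use the same tuple pattern in each block, with block $i$ carrying a tuple $\mathbf{A}^{(i)}\in\Mat_{n_i}^m$ such that $\det s_m(\mathbf{A}^{(i)})\ne0$. Then $s_m$ of the block-diagonal tuple is the block-diagonal matrix of the $s_m(\mathbf{A}^{(i)})$, so the product of the block determinants is nonzero. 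This reduces everything to the case where $m$ is close to $2n-2$. Concretely, it suffices to handle $m=2n-2$ and $m=2n-4$ for each $n$, since any $n$ is a sum of blocks $n_i$ with $m\in\{2n_i-2,2n_i-4\}$ or smaller. Alternatively, we can prove the single statement for $m=2n_i-2$ and simply take $n_i=p+1$ and one leftover block. The leftover issue can be avoided by instead taking one block of size $p+1$ and the remaining $n-p-1$ coordinates handled by another application, working by induction on $n$.

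\textbf{Step 3: the case $m=2n-2$ (main obstacle).} Here a single matrix-unit specialization is not enough. For each $0\le i\le n-1$ we must show the diagonal entry $g_i=(s_{2n-2})_{ii}$ is a nonzero polynomial on the graded space $\prod_{j=-(n-2)}^{n-2}\mathcal{M}_j\times\mathcal{M}_0$, which has one extra degree-$0$ slot. Once each $g_i$ is nonzero, irreducibility gives a common point where all $g_i\ne0$, and the diagonal matrix $s_{2n-2}(\mathbf{A})$ is then invertible.

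To show $g_i\ne0$, use the graph from Lemma \ref{lem:g_a-nonzero}, namely a star centred at the root with root loop, and add an extra loop at $i$. The entry $(s_{2n-2})_{ii}$ is the signed count of Eulerian circuits starting and ending at $i$. Starting the circuit at the loop at $i$ makes the count equal to a single-sign count in the spirit of Lemma \ref{lem:bidirected-tree}. The delicate point is verifying that the signs do not cancel in this even-length setting, because the loop at $i$ and the root loop both have odd-length positions. If cancellation occurs, I would first vary which $i$ carries the extra loop. Failing that, I would replace the star with a path tree, so that the odd-circuit exchange argument of Lemma \ref{lem:odd-circuit} is blocked.
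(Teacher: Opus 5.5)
\textbf{Step 1.} It is correct and is a genuine shortcut. For odd $m\le 2n-3$, Lemma~\ref{lem:identity-insertion} gives $s_m(B_1,\dots,B_{m-1},I_n)=s_{m-1}(B_1,\dots,B_{m-1})$, so odd $m$ follows from the even case $m-1$. The paper instead handles odd $m$ with a separate specialization: a star with a root loop and a degree-zero slot.

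\textbf{Step 2 does not close.} Every diagonal block must satisfy $m\le 2n_i-2$, because a block with $2n_i\le m$ contributes $s_m\equiv0$ by Amitsur--Levitzki. Your claim that every $n$ is a sum of parts with $m\in\{2n_i-2,\,2n_i-4\}$ is false. For $m=6$ the parts must be $\ge4$, so $n=6$ and $n=7$ admit only the trivial partition, and $(m,n)=(6,6)$ is neither of your base cases. The ``leftover block by induction'' has the same defect: a leftover of size $n-p-1<p+1$ again has $s_m\equiv0$. So Step 2 never actually reduces the even case to $m=2n-2$.

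\textbf{Step 3 fails as proposed, not merely for lack of detail.} Take the star with root loop plus an extra loop $\ell_i$ at $i$. If $i=0$, the argument $E_{0,0}$ occurs twice, so $s_m=0$. If $i\ne0$, every Eulerian circuit from $i$ has the form $(\ell_i,a,E,b)$ or $(a,E,b,\ell_i)$, with $a\colon i\to0$ and $b\colon 0\to i$. These two forms differ by a cyclic shift of a word of even length $2n-2$, which is an odd permutation, so they cancel in pairs. Moving the extra loop to some $j\ne i$ creates two odd-length excursions from $0$: the root loop and $0\to j\to j\to0$. These cancel exactly as in Lemma~\ref{lem:odd-circuit}. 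Worse, on your space with two degree-zero slots, $s_{2n-2}$ vanishes identically for $n=2$, where it is $[\mathcal{M}_0,\mathcal{M}_0]=0$. It also vanishes for $n=3$, by the computation in the Remark of Section~\ref{sec:additional-case}, which uses only that degree-zero products commute. These are precisely the base blocks your Step 2 would need.

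\textbf{The missing idea} is to drop the degree-zero slots entirely. For even $m=2m'$, take $A_{\pm j}\in\mathcal{M}_{\pm j}$ for $1\le j\le m'$, so that $s_m(\mathbf{A})$ is diagonal. For each $i$, specialize to the loopless bidirected star centred at $l=\max\{0,i-m'\}$ with leaves $l+1,\dots,l+m'$; this fits because $m'\le n-1$. Every excursion from the centre now has length $2$, so all Eulerian circuits from $i$ have the same sign. The $(i,i)$ entry is therefore $\pm(m')!$ when $i$ is the centre and $\pm(m'-1)!$ when $i$ is a leaf; this is Brassil--Reichstein's Lemma~2.3, as the paper uses it. Irreducibility of the graded space then gives a common point where all diagonal entries are nonzero. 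This treats every even $m\le 2n-2$ at once, so Step 2 becomes unnecessary. Combined with your Step 1, it proves the lemma.
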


\begin{proof}
The proof is similar to that of Lemma \ref{lem:g_a-nonzero}. Denote $m'=\left\lfloor\frac{m}{2}\right\rfloor$. We first assume that $m$ is odd. Specialize 
\[
\mathbf{A}=(A_{-m'},\cdots,A_{m'})\in\mathcal{H}_m=\prod_{i=-m'}^{m'}\mathcal{M}_i.
\]
Then
\[
s_m(\mathbf{A})=\operatorname{diag}
\{
g_0'(\mathbf{A}),\cdots,g_{n-1}'(\mathbf{A})
\}
\]
for polynomial functions $g_i'$ on $\mathcal{H}_m$. It suffices to show each $g_i'$ is nonzero. Set $l=\max\{0,i-m'\}$. Specialize $A_0=E_{l,l}$ and $A_j=E_{l,l+j},~A_{-j}=E_{l+j,l}$ for $1\le j\le m'$. Then by Lemma \ref{lem:trace-euler} and \ref{lem:bidirected-tree}, we have $g_i'(\mathbf{A})\ne 0$.

Assume that $m$ is even. Specialize $\mathbf{A}=(A_{-m'},\cdots,A_{-1},A_1,\cdots,A_{m'})$, where $A_i\in\mathcal{M}_i$. Then
\[
s_m(\mathbf{A})=\operatorname{diag}
\{
f_0'(\mathbf{A}),\cdots,f_{n-1}'(\mathbf{A})
\}
\]
for polynomial functions $f_i'$ on $\prod_{j=-m'}^{-1}\mathcal{M}_j\times\prod_{j=1}^{m'}\mathcal{M}_j$. It suffices to show each $f_i'$ is nonzero. Set $l=\max\{0,i-m'\}$. Specialize $A_j=E_{l,l+j},~A_{-j}=E_{l+j,l}$ for $1\le j\le m'$. Then by \cite[Lemma 2.3]{BrassilReichstein}, we have $f_i'(\mathbf{A})\ne 0$.
\end{proof}

For a tuple $\mathbf{A}=(A_1,\cdots,A_k)\in\Mat_n(\mathbb{R})^k$, write $\bar{\mathbf{A}}=(A_1,\cdots,A_{k-1})\in\Mat_n(\mathbb{R})^{k-1}$.

\begin{lemma}\label{lem:specialization}
    Let $k$ be odd and $3\le k\le 2n-3$. Assume that a good tuple exists for $(k,n)$. Then there exists a good tuple $\mathbf{A}=(A_1,\cdots,A_k)\in\Mat_n(\mathbb{R})^k$ such that 
    \begin{enumerate}
    \item[(a)] $\det s_k(\mathbf{A})\ne 0$;
    \item[(b)] $\det s_{k-1}(\bar{\mathbf{A}})\ne 0$;
    \item[(c)] $\dim\Span\{s_k(\mathbf{A}),s_{k-1}(\bar{\mathbf{A}})\}=2$.
\end{enumerate}
\end{lemma}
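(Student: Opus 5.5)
The plan is a Zariski-density argument on the irreducible affine space $\Mat_n(\mathbb{R})^k$. The set $\mathcal{G}$ of good tuples is a nonempty Zariski-open subset. This uses the hypothesis together with the lower bound of Corollary \ref{nullity-lower-bound}: goodness means $\rank L_{\mathbf{A}}$ is at least a fixed value, which is a non-vanishing-of-some-minor condition. I will show that each of (a), (b), (c) also defines a nonempty Zariski-open subset of $\Mat_n(\mathbb{R})^k$. Irreducibility then gives a tuple in the intersection of all four sets.

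For (a), the function $\mathbf{A}\mapsto\det s_k(\mathbf{A})$ is a polynomial on $\Mat_n(\mathbb{R})^k$. Since $3\le k\le 2n-3\le 2n-2$, Lemma \ref{lem:det-sm-nonzero} with $m=k$ shows it is not identically zero. Hence $\{\det s_k(\mathbf{A})\ne0\}$ is nonempty and open.

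For (b), the polynomial $\mathbf{A}\mapsto\det s_{k-1}(A_1,\dots,A_{k-1})$ depends only on $\bar{\mathbf{A}}$. Since $2\le k-1\le 2n-4$, Lemma \ref{lem:det-sm-nonzero} with $m=k-1$ gives some $\bar{\mathbf{A}}$ at which it is nonzero. Extending by an arbitrary $A_k$ shows the polynomial is nonzero on $\Mat_n(\mathbb{R})^k$.

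For (c), the condition $\dim\Span\{s_k(\mathbf{A}),s_{k-1}(\bar{\mathbf{A}})\}=2$ says that some $2\times2$ minor of the $2\times n^2$ matrix formed by the entries of the two matrices is nonzero. This is open, so the work is to exhibit one tuple where it holds. I expect this to be the only step needing a small idea. Since $k$ is odd, Lemma \ref{lem:identity-insertion} with $m=k$ gives $s_k(A_1,\dots,A_{k-1},I_n)=s_{k-1}(\bar{\mathbf{A}})$. So the two matrices coincide when $A_k=I_n$, and I cannot argue from $A_k=I_n$ directly. Instead I will vary $A_k$. Fix $\bar{\mathbf{A}}$ with $P:=s_{k-1}(\bar{\mathbf{A}})$ invertible, which is possible by (b). The map $A_k\mapsto s_k(\bar{\mathbf{A}},A_k)$ is linear, and its value at $I_n$ is $P$. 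If (c) failed for every $A_k$, then $s_k(\bar{\mathbf{A}},A_k)\in\mathbb{R}P$ for all $A_k$. Writing $s_k(\bar{\mathbf{A}},A_k)=\phi(A_k)P$, I would then need a contradiction.

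To get one, combine with (a). Since (a) and (b) define nonempty open sets, I can choose $\mathbf{A}$ satisfying both and also put $A_k$ generic. If the image of $A_k\mapsto s_k(\bar{\mathbf{A}},A_k)$ were always at most one-dimensional, I would check this against a single explicit instance. A cleaner route is to note that the image is $\mathbb{R}P$ only if $s_k(\bar{\mathbf{A}},A_k)$ is a scalar multiple of $P$ for all $A_k$. Taking $A_k=A_1$ makes $s_k$ vanish, so $\phi(A_1)=0$, and this alone is no contradiction. Therefore I will exhibit a concrete tuple: take the degree-graded specialization from Lemma \ref{lem:det-sm-nonzero}, or a small modification of it, and compute one diagonal or off-diagonal entry where $s_k$ and $s_{k-1}$ are visibly not proportional. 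For example, choose $A_k$ to be a matrix unit of nonzero degree, so that $s_k(\bar{\mathbf{A}},A_k)$ lies in a different degree component $\mathcal{M}_q$ than the diagonal matrix $P$. Showing that this $s_k$ is nonzero can be done with Lemma \ref{lem:trace-euler}-style Eulerian-path counting, as in Lemma \ref{lem:g_a-nonzero}. The remaining obstacle is to confirm nonvanishing for that particular graph.

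With all three conditions open and nonempty, the four sets intersect, and any tuple in the intersection is the desired good tuple.
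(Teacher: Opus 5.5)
Your overall structure matches the paper's. Good tuples form a nonempty open set, (a) and (b) are nonempty open by Lemma \ref{lem:det-sm-nonzero}, and you intersect in the irreducible space $\Mat_n(\mathbb{R})^k$. The gap is in (c), the only non-routine step: you never actually exhibit a tuple satisfying it.

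Your first route stalls where an extra input is needed. The reduction ``if (c) fails for every $A_k$, then the image of $X\mapsto s_k(\bar{\mathbf{A}},X)=L_{\bar{\mathbf{A}}}(X)$ lies in $\mathbb{R}P$'' is correct. But you have no lower bound on $\rank L_{\bar{\mathbf{A}}}$ to contradict it. The paper supplies that bound from the even case \cite[Theorem 1.2]{BrassilReichstein}. For generic $\bar{\mathbf{A}}$ one has $\dim\ker L_{\bar{\mathbf{A}}}=k-1$, so $\dim\Im L_{\bar{\mathbf{A}}}=n^2-k+1>1$. Since $L_{\bar{\mathbf{A}}}(I_n)=s_{k-1}(\bar{\mathbf{A}})$ by Lemma \ref{lem:identity-insertion}, some $A_k$ then satisfies $L_{\bar{\mathbf{A}}}(A_k)\notin\mathbb{R}s_{k-1}(\bar{\mathbf{A}})$. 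Your second route (graded specialization with $A_k$ of nonzero degree) is viable. However, you leave the graph unspecified and list its nonvanishing as an unresolved obstacle, so as written (c) is unproved.

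Here is how to close it along your line. Put $m'=(k-1)/2$ and let $\bar{\mathbf{A}}$ be the bidirected star $E_{0,j},E_{j,0}$ for $1\le j\le m'$. Take $A_k=E_{0,q}$ with $q=m'+1\le n-1$, which is possible since $k\le 2n-3$. Then $s_{k-1}(\bar{\mathbf{A}})\in\mathcal{M}_0$ and $s_k(\mathbf{A})\in\mathcal{M}_q$ with $q\ne0$.

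\begin{itemize}
\item The only edge at $q$ is $0\to q$, and it is an out-edge of $0$ with no edge leaving $q$. So every Eulerian path from $0$ to $q$ consists of the excursions $0\to j\to 0$ in some order, followed by $0\to q$.
\item Reordering excursions permutes blocks of length $2$, which is an even permutation. Hence all these paths have the same sign, and the $(0,q)$ entry of $s_k(\mathbf{A})$ is $\pm m'!\ne0$.
\item The same count gives the $(0,0)$ entry of $s_{k-1}(\bar{\mathbf{A}})$ as $\pm m'!\ne0$.
\end{itemize}

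Two nonzero matrices in different degree components are linearly independent, so (c) holds at this point.

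Compared with the paper, this check is elementary and self-contained. The paper's argument is shorter, but it invokes the full even-case theorem when only $\rank L_{\bar{\mathbf{A}}}\ge 2$ is actually needed.
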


\begin{proof}
    By Lemma \ref{lem:det-sm-nonzero} and Corollary \ref{nullity-lower-bound}, good tuples satisfying conditions (a) and (b) form a nonempty Zariski-open subset of $\Mat_n(\mathbb{R})^k$. As tuples satisfying condition (c) also form a Zariski-open subset of $\Mat_n(\mathbb{R})^k$, it suffices to show the subset is nonempty. 
    
    For generic $A_1,\cdots,A_{k-1}$, by \cite[Theorem 1.2]{BrassilReichstein}, the nullity of $L_{\bar{\mathbf{A}}}$ is $k-1$. Thus $\dim\Im L_{\bar{\mathbf{A}}}=n^2-(k-1)>1$. Therefore, we can choose $A_k\in\Mat_n(\mathbb{R})$ such that 
    \[
    0\ne s_k(\mathbf{A})=L_{\bar{\mathbf{A}}}(A_k)\notin \mathbb{R}L_{\bar{\mathbf{A}}}(I_n)=\mathbb{R}s_{k-1}(\bar{\mathbf{A}}).
    \]
    Here the last equality follows from Lemma \ref{lem:identity-insertion}. We can further require $s_{k-1 (\bar{\mathbf{A}})}\ne 0$. Such a tuple $\mathbf{A}$ satisfies $\dim\Span\{s_k(\mathbf{A}),s_{k-1}(\bar{\mathbf{A}})\}=2$.
\end{proof}

For a tuple $\mathbf{A}=(A_1,\cdots,A_k)\in\Mat_n(\mathbb{R})^k$, set 
\begin{equation}\label{eq:diagonal-embed-n+2}
    \tilde{A}_i=\operatorname{diag}\{A_i,0,0\}~(1\le i\le k-1),\quad \tilde{A}_k=\operatorname{diag}\{A_k,0,\tau\}
\end{equation}
for some $\tau\in\mathbb{R}$ and write $\tilde{\mathbf{A}}=(\tilde{A}_1,\cdots,\tilde{A}_k)\in\Mat_{n+2}(\mathbb{R})^k$. We introduce the four subspaces of $\Mat_{n+2}(\mathbb{R})$:
\[
\mathcal{X}=\left\{\begin{pmatrix}X&0\\0&0_{2\times 2}\end{pmatrix}~\middle|~X\in\Mat_n(\mathbb{R})\right\},
\]
\[
\mathcal{S}=\left\{\begin{pmatrix}0_{n\times n}&0\\0&Z\end{pmatrix}~\middle|~Z\in\Mat_2(\mathbb{R})\right\},
\]
\[
\mathcal{U}=\left\{\begin{pmatrix}0_{n\times n}&(u_1,u_2)\\0&0_{2\times 2}\end{pmatrix}~\middle|~u_1,u_2\in \mathbb{R}^n \right\},
\]
\[
\mathcal{V}=\left\{\begin{pmatrix}0_{n\times n}&0\\(v_1,v_2)^\mathsf{T}&0_{2\times 2} \end{pmatrix}~\middle|~v_1,v_2\in\mathbb{R}^n\right\}. 
\]
Then
\begin{equation}\label{eq:four-space-decomposition-n+2}
    \Mat_{n+2}(\mathbb{R}) = \mathcal{X}\oplus\mathcal{S}\oplus\mathcal{U}\oplus\mathcal{V}.
\end{equation}
  
\begin{lemma}\label{lem:block-decomposition-n+2} 
Each of the four spaces $\mathcal{X},~\mathcal{S},~\mathcal{U}$ and $\mathcal{V}$ is invariant under $L_{\tilde{\mathbf{A}}}$. Under the natural identifications 
\[ 
\mathcal{X}\cong\Mat_n(\mathbb{R}),\quad\mathcal{S}\cong\Mat_2(\mathbb{R}),\quad\mathcal{U}\cong\mathbb{R}^n\oplus\mathbb{R}^n,\quad\mathcal{V}\cong\mathbb{R}^n\oplus\mathbb{R}^n, 
\]
we have $L_{\tilde{\mathbf{A}}}|_{\mathcal{X}} = L_{\mathbf{A}},~L_{\tilde{\mathbf{A}}}|_{\mathcal{S}}=0$ and
\begin{align*}
L_{\tilde{\mathbf{A}}}|_{\mathcal{U}}(u_1,u_2)&=\left(s_{k}(\mathbf{A})u_1,~\left(s_{k}(\mathbf{A})-\tau s_{k-1}(\bar{\mathbf{A}})\right)u_2\right),\\
L_{\tilde{\mathbf{A}}}|_{\mathcal{V}}(v_1,v_2)&=\left(-s_{k}(\mathbf{A})^{\mathsf{T}}v_1,~\left(-s_{k}(\mathbf{A})^{\mathsf{T}}+\tau s_{k-1}(\bar{\mathbf{A}})^{\mathsf{T}}\right)v_2\right). 
\end{align*}
\end{lemma}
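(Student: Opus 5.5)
The plan is a direct expansion of $s_{k+1}(\tilde A_1,\dots,\tilde A_k,Y)$ for $Y$ in each of the four summands of \eqref{eq:four-space-decomposition-n+2}, using block multiplication. Write every $\tilde A_i$ in block form with respect to the partition $n+1+1$ of rows and columns. The first $k-1$ matrices are $\operatorname{diag}\{A_i,0,0\}$, and $\tilde A_k=\operatorname{diag}\{A_k,0,\tau\}$. A monomial in $s_{k+1}$ is a product of the $\tilde A_i$ with one copy of $Y$ inserted somewhere.

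For $Y\in\mathcal{X}$, every factor is supported in the $n\times n$ block, so each monomial is $\operatorname{diag}\{\text{corresponding monomial in }A_i,X\}$ padded with zeros; summing with signs gives $L_{\tilde{\mathbf A}}|_{\mathcal X}=L_{\mathbf A}$. For $Y=\operatorname{diag}\{0,Z\}\in\mathcal{S}$, any monomial containing some $\tilde A_i$ with $i\le k-1$ vanishes, because the $n\times n$ block is orthogonal to $\mathcal{S}$. Since $k\ge3$, every monomial contains such a factor, so $L_{\tilde{\mathbf A}}|_{\mathcal S}=0$.

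For $\mathcal{U}$, take $Y=(0\ u_1\ u_2)$ placed in the top-right $n\times 2$ block. Then $\tilde A_iY$ equals $(A_iu_1,A_iu_2)$ in $\mathcal U$, while $Y\tilde A_i=0$ for $i\le k-1$ and $Y\tilde A_k=(0,\tau u_2)\in\mathcal U$. Any monomial in which $Y$ is followed by some $\tilde A_i$ with $i\le k-1$ vanishes, so $Y$ must be last, or second to last with $\tilde A_k$ last. In the first case the monomials sum to $s_k(\mathbf A)Y$, i.e.\ $(s_k(\mathbf A)u_1,s_k(\mathbf A)u_2)$. In the second case the monomials have the form $\tilde A_{\pi(1)}\cdots\tilde A_{\pi(k-1)}Y\tilde A_k$ with $\pi\in S_{k-1}$. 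Relative to the identity ordering $(A_1,\dots,A_k,X)$, these carry sign $\sgn(\pi)\cdot(-1)$, since moving $X$ past $A_k$ is one transposition. They therefore sum to $-s_{k-1}(\bar{\mathbf A})\,(0,\tau u_2)$. This yields the stated formula, and in particular $\mathcal U$ is invariant.

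For $\mathcal{V}$, the argument is symmetric. Now $\tilde A_iY=0$ for $i\le k-1$, while $\tilde A_kY$ picks up $\tau$ on the $v_2$ row, and $Y\tilde A_i$ multiplies by $A_i$ on the right. So $Y$ must come first, or second after $\tilde A_k$. The first case gives $Y s_k(\mathbf A)$ with sign $(-1)^k=-1$, since moving $X$ to the front passes $k$ factors. Transposing the row vectors $v_j^{\mathsf T}A$ turns this into $-s_k(\mathbf A)^{\mathsf T}v_j$. The second case gives $\tilde A_kY\tilde A_{\pi(1)}\cdots$ with sign $(-1)^{k}\cdot(-1)^{k-1}\cdot\sgn(\pi)\cdot(\text{sign})$. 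Working this out carefully should give $+\tau s_{k-1}(\bar{\mathbf A})^{\mathsf T}v_2$, matching the statement.

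The main obstacle is only sign bookkeeping: the parity of the permutations placing $X$ and $A_k$ at the ends, where oddness of $k$ is used. One must also check that the $(n{+}2,n{+}2)$-entry contributions never leave $\mathcal U$ or $\mathcal V$. This holds because $\mathcal U\cdot\mathcal U=0$ and every other product stays in the same summand.
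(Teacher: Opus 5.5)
Your proposal is correct and follows the paper's own argument: block-diagonality of the $\tilde A_i$ gives invariance of the four summands, and on $\mathcal U$ (resp.\ $\mathcal V$) the only surviving monomials have the test matrix last or immediately before a final $\tilde A_k$ (resp.\ first or immediately after an initial $\tilde A_k$), which is exactly the paper's two-sum expansion. Two small points: the sign you left open in the second $\mathcal V$ case is $+\sgn(\pi)$, since the ordering $(A_k,X,A_{\pi(1)},\dots,A_{\pi(k-1)})$ has $2(k-1)+\operatorname{inv}(\pi)$ inversions relative to $(A_1,\dots,A_k,X)$; and in the $\mathcal X$ case the right justification is block-diagonality (the lower-right $2\times2$ block of each monomial contains the zero block of $Y$), not that every factor is supported in the $n\times n$ block, because $\tilde A_k$ carries the entry $\tau$.
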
 

\begin{proof}
Every $\tilde{A}_i$ is block-diagonal. Hence left or right multiplication by $\tilde{A}_i$ preserves each summand in \eqref{eq:four-space-decomposition-n+2}, so does $L_{\tilde{\mathbf{A}}}$. For $X\in\Mat_n(\mathbb{R})$ and $Y\in\Mat_2(\mathbb{R})$, block multiplication gives 
\[
L_{\tilde{\mathbf{A}}} \left(\begin{pmatrix} X&0\\ 0&0_{2\times2}\end{pmatrix}\right) = \begin{pmatrix} L_{\mathbf{A}}(X)&0\\ 0&0_{2\times2} \end{pmatrix},\quad L_{\tilde{\mathbf{A}}} \left(\begin{pmatrix} 0_{n\times n}&0\\ 0&Y \end{pmatrix}\right) = 0. 
\]
For
\[ 
U(u_1,u_2)= 
\begin{pmatrix} 
0_{n\times n}&(u_1,u_2)\\
0&0_{2\times 2} 
\end{pmatrix} 
\in\mathcal{U}
\] 
and $1\le i\le k-1$, we have $U(u_1,u_2)\tilde{A}_i=0$. Therefore
\begin{align*}
    L_{\tilde{\mathbf{A}}}\left(U(u_1,u_2)\right)&=\sum_{\sigma\in S_k} \sgn(\sigma)\tilde{A}_{\sigma(1)}\cdots \tilde{A}_{\sigma(k)}U(u_1,u_2)-\sum_{\sigma\in S_{k-1}} \sgn(\sigma)\tilde{A}_{\sigma(1)}\cdots \tilde{A}_{\sigma(k-1)}U(u_1,u_2)\tilde{A}_k\\
    &= 
    \begin{pmatrix} 0_{n\times n}&s_k(\mathbf{A})(u_1,u_2)\\0&0_{2\times 2}
    \end{pmatrix}
    -
    \begin{pmatrix} 0_{n\times n}&\left(0,\tau s_{k-1}(\bar{\mathbf{A}})(u_2)\right)\\0&0_{2\times 2}
    \end{pmatrix}
    \\
    &=U\left(s_{k}(\mathbf{A})u_1,~\left(s_{k}(\mathbf{A})-\tau s_{k-1}(\bar{\mathbf{A}})\right)u_2\right). 
\end{align*} 
The computation of $L_{\tilde{\mathbf{A}}}|_{\mathcal{V}}$ is similar. 
\end{proof}

To control the nullity, we introduce the perturbation term. Let 
\begin{equation}\label{eq:perturbation}
    B=
\begin{pmatrix} 
0_{n\times n}&U\\V&0_{2\times 2}
\end{pmatrix}
\in\Mat_{n+2}(\mathbb{R})
\end{equation}
for some $U\in\Mat_{n\times2}(\mathbb{R}),~V\in\Mat_{2\times n}(\mathbb{R})$ and write $\mathbf{B}=(\tilde{A}_1,\cdots,{\tilde{A}_{k-1}},B)\in\Mat_{n+2}(\mathbb{R})^k$. The following lemma follows immediately from matrix multiplication.

\begin{lemma}\label{lem:perturbation-term-V}
    We have 
    \[
    L_{\mathbf{B}}(\mathcal{X})\subseteq\mathcal{U}\oplus\mathcal{V},\quad L_{\mathbf{B}}(\mathcal{U})\subseteq\mathcal{X}\oplus\mathcal{S},\quad L_{\mathbf{B}}(\mathcal{V})\subseteq\mathcal{S}\oplus\mathcal{X},\quad L_{\mathbf{B}}(\mathcal{S})\subseteq\mathcal{U}\oplus\mathcal{V}.
    \]
    Moreover, for $Z\in\Mat_2(\mathbb{R})$, we have
    \begin{equation}\label{eq:L-B-Z}
        L_{\mathbf{B}}\left(\begin{pmatrix}0_{n\times n}&0\\0&Z\end{pmatrix}\right)=\begin{pmatrix}0_{n\times n}&s_{k-1}(\bar{\mathbf{A}})UZ\\-ZVs_{k-1}(\bar{\mathbf{A}})&0_{2\times 2}\end{pmatrix}.
    \end{equation}
\end{lemma}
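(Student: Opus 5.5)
The plan is to expand $L_{\mathbf{B}}(Y)=s_k(\tilde A_1,\dots,\tilde A_{k-1},B,Y)$ with $B$ in its distinguished slot. For each monomial $\tilde A_{\sigma(\cdot)}\cdots B\cdots Y\cdots$ we record the block type of every factor. Here $\tilde A_i\in\mathcal X$ for $i\le k-1$, $B\in\mathcal U\oplus\mathcal V$, and $Y$ lies in one of the four spaces. Index sets split as $\{1,\dots,n\}$ (the ``$\mathcal X$-part'') and $\{n+1,n+2\}$ (the ``$\mathcal S$-part''). Every $\tilde A_i$ maps the $\mathcal X$-part to itself and kills the $\mathcal S$-part. $B$ swaps the two parts. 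Elements of $\mathcal X,\mathcal S$ preserve parts, while elements of $\mathcal U,\mathcal V$ swap them.

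Counting swaps in a product then determines whether it starts and ends in the same part or in different parts. This gives the four inclusions at once. If $Y\in\mathcal X$ or $\mathcal S$, the monomial contains exactly one swap (from $B$), so it lands in the off-diagonal blocks $\mathcal U\oplus\mathcal V$. If $Y\in\mathcal U$ or $\mathcal V$, there are two swaps, so it lands in $\mathcal X\oplus\mathcal S$.

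For \eqref{eq:L-B-Z}, take $Y=\operatorname{diag}\{0,Z\}$. Since every $\tilde A_i$ annihilates the $\mathcal S$-part on both sides, a nonzero monomial cannot have any $\tilde A_i$ adjacent to $Y$. Moreover, between $Y$ and $B$ the path must stay inside the $\mathcal S$-part, where no $\tilde A_i$ acts. Hence $Y$ and $B$ must be adjacent, and all $\tilde A_i$ lie on the opposite side of the pair.

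This leaves two types of surviving monomials. The first is $\tilde A_{\tau(1)}\cdots\tilde A_{\tau(k-1)}BY$, whose $(1,2)$ block is $(\prod A_{\tau(j)})UZ$. The second is $YB\tilde A_{\tau(1)}\cdots\tilde A_{\tau(k-1)}$, whose $(2,1)$ block is $ZV\prod A_{\tau(j)}$.

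The remaining step is the sign bookkeeping. In $s_k$ the arguments are ordered $(\tilde A_1,\dots,\tilde A_{k-1},B,Y)$, so the first type has sign $\sgn(\tau)$. The second type moves $B,Y$ from positions $k,k+1$ to positions $2,1$. That is the transposition of $B$ and $Y$ followed by a block shift of the pair past $k-1$ letters, which is an even number of transpositions. Together with the transposition this gives sign $-\sgn(\tau)$.

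Summing over $\tau$ gives $s_{k-1}(\bar{\mathbf A})UZ$ in the upper-right block and $-ZVs_{k-1}(\bar{\mathbf A})$ in the lower-left block, as claimed. I expect the only real obstacle to be getting this sign right; the shift of a pair past $k-1$ letters has sign $(-1)^{2(k-1)}=1$, so no parity of $k$ enters.
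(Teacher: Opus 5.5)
Your argument is correct and is essentially the paper's. The paper only says the lemma follows from matrix multiplication, and you carry that computation out: the parity count of off-diagonal factors gives the four inclusions, and $\tilde A_iY=Y\tilde A_i=0$ forces $Y$ to sit at an end of each surviving monomial, adjacent to $B$, with signs $\sgn(\tau)$ and $-\sgn(\tau)$. This is the same style as the paper's proof of Lemma~\ref{lem:block-decomposition-n+2}. One notational slip: since there are $k+1$ arguments, $L_{\mathbf B}(Y)$ is $s_{k+1}(\tilde A_1,\dots,\tilde A_{k-1},B,Y)$, not $s_k$.
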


\begin{proposition}\label{prop:n-to-n+2}
    Let $k$ be odd and $3\le k\le 2n-3$. Assume that a good tuple exists for $(k,n)$. Then a good tuple also exists for $(k,n+2)$.
\end{proposition}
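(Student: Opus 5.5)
The plan is to start from the good tuple $\mathbf{A}\in\Mat_n(\mathbb{R})^k$ provided by Lemma \ref{lem:specialization}. We embed it as $\tilde{\mathbf{A}}$ via \eqref{eq:diagonal-embed-n+2} and then perturb the last entry, setting
\[
\mathbf{A}(\lambda)=(\tilde A_1,\dots,\tilde A_{k-1},\tilde A_k+\lambda B),
\]
with $B$ as in \eqref{eq:perturbation}. Since $s_{k+1}$ is multilinear, $L_{\mathbf{A}(\lambda)}=L_{\tilde{\mathbf{A}}}+\lambda L_{\mathbf{B}}$.

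\textbf{Step 1: the unperturbed operator.} By Lemma \ref{lem:block-decomposition-n+2}, $L_{\tilde{\mathbf{A}}}$ restricted to $\mathcal{X}$ is $L_{\mathbf{A}}$, whose nullity is $k+1$ or $k+2$ (same parity class as for $n+2$). It vanishes on $\mathcal{S}$. On $\mathcal{U}$ and $\mathcal{V}$ it is invertible once $s_k(\mathbf{A})$ and $s_k(\mathbf{A})-\tau s_{k-1}(\bar{\mathbf{A}})$ are invertible. The first holds by Lemma \ref{lem:specialization}(a). For the second, $\det(s_k(\mathbf{A})-\tau s_{k-1}(\bar{\mathbf{A}}))$ is a polynomial in $\tau$ with nonzero constant term, so it is nonzero for all but finitely many $\tau$.

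Hence $\dim\ker L_{\tilde{\mathbf{A}}}$ equals the target value plus $4$, where the extra $4$ is exactly $\mathcal{S}$. Moreover
\[
\Im L_{\tilde{\mathbf{A}}}=\Im L_{\mathbf{A}}\oplus\mathcal{U}\oplus\mathcal{V}.
\]
Because the lower bound of Corollary \ref{nullity-lower-bound} holds for every tuple and is the same for $n$ and $n+2$, it suffices to show that for some $U,V,\tau$ and small $\lambda\neq0$ the nullity drops by $4$. Goodness is Zariski-open, so one such $\lambda$ suffices.

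\textbf{Step 2: reduction to a $2\times2$ problem.} I would use the standard perturbation argument: for small $\lambda\neq 0$, $\dim\ker(L_0+\lambda L_1)$ is bounded by the nullity of the "effective" operator on $\ker L_0$. Here $L_0=L_{\tilde{\mathbf{A}}}$ and $L_1=L_{\mathbf{B}}$. To first order the effective operator is $P L_1|_{\ker L_0}$, where $P$ is the projection onto $\operatorname{coker}L_0\cong(\mathcal{X}/\Im L_{\mathbf{A}})\oplus\mathcal{S}$.

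By Lemma \ref{lem:perturbation-term-V}, $L_{\mathbf{B}}$ sends $\mathcal{S}$ and $\mathcal{X}$ into $\mathcal{U}\oplus\mathcal{V}\subseteq\Im L_0$, so the first-order term on $\ker L_0=\ker L_{\mathbf{A}}\oplus\mathcal{S}$ vanishes. We must therefore go to second order. For $Z\in\mathcal{S}$, equation \eqref{eq:L-B-Z} gives $L_{\mathbf{B}}(Z)$, which we pull back through $L_0$ on $\mathcal{U}\oplus\mathcal{V}$:
\[
W(Z)=L_0|_{\mathcal{U}\oplus\mathcal{V}}^{-1}L_{\mathbf{B}}(Z).
\]
We then take the $\mathcal{S}$-component of $L_{\mathbf{B}}(W(Z))$. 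The resulting map $\Phi:\Mat_2\to\Mat_2$ should be an explicit expression in $V$, $U$, $s_{k-1}(\bar{\mathbf{A}})$, and the inverses of $s_k$ and $s_k-\tau s_{k-1}$. It is quadratic in $(U,V)$, and the second column is twisted by $\tau$.

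It suffices that $\Phi$ is injective on $\Mat_2$, i.e.\ has full rank $4$. Then the second-order Schur complement on $\mathcal{S}$ is nondegenerate, and the eigenvalue-splitting argument shows that $4$ dimensions of kernel are lost for small $\lambda\neq0$. Cross terms with $\ker L_{\mathbf{A}}$ can only help: I would organize the argument as a block matrix and take the $\lambda^{2\cdot 4}$ leading term of a suitable minor. Alternatively, one can simply use that the nullity cannot fall below the lower bound.

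\textbf{Main obstacle.} The hard step is the explicit computation of $\Phi$ and the choice of $U,V$ making it invertible. I would compute the $\mathcal{S}$-component of $L_{\mathbf{B}}$ applied to elements of $\mathcal{U}\oplus\mathcal{V}$. These are terms in which $B$ appears once and the word ends or starts in $\mathcal{S}$, giving contributions of the form $V s_{k-1}(\cdot)u$ and $v^{\mathsf T}s_{k-1}U$. I expect $\Phi(Z)$ to have the form
\[
\Phi(Z)=V M_1 U Z+Z V M_2 U
\]
with the $\tau$-dependent matrices $M_1,M_2$ acting differently on the two columns.

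To make $\Phi$ visibly invertible, I would try $U=(u,u')$ and $V=(v,v')^{\mathsf T}$ chosen so that the bilinear quantities $v^{\mathsf T}M u$ form diagonal $2\times2$ matrices with distinct entries. Condition (c) of Lemma \ref{lem:specialization}, the independence of $s_k(\mathbf{A})$ and $s_{k-1}(\bar{\mathbf{A}})$, together with the parameter $\tau$, is what should let the two columns have different "eigenvalues". This rules out the degeneracy that a Sylvester-type map $Z\mapsto PZ+ZQ$ has when $P$ and $-Q$ share eigenvalues. The invertibility is then a nonvanishing polynomial condition in $(U,V,\tau)$, so exhibiting one choice suffices.
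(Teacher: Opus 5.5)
Your architecture matches the paper's: the same embedding $\tilde{\mathbf A}$ with the $\tau$-twist, the same perturbation $\tilde A_k+\lambda B$, the vanishing of the first-order term, and the reduction to an order-$\lambda^2$ Schur complement on $\mathcal S$. That complement is detected by the $\lambda^8$ leading term of the minor on $\mathcal F\oplus\mathcal U\oplus\mathcal V\oplus\mathcal S$, and Corollary~\ref{nullity-lower-bound} finishes the argument.

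\textbf{The gap.} It is exactly the step you flag as the main obstacle. You never exhibit $U,V$ making that Schur complement invertible, and the recipe you propose cannot work, because your guessed form of $\Phi$ is wrong.

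Put $P=s_k(\mathbf A)$, $Q=s_{k-1}(\bar{\mathbf A})$, $N_1=VQP^{-1}QU$ and $N_2=VQ(P-\tau Q)^{-1}QU$ in $\Mat_2(\mathbb R)$. Pulling \eqref{eq:L-B-Z} back through Lemma~\ref{lem:block-decomposition-n+2} gives $P^{-1}QUZE_{1,1}+(P-\tau Q)^{-1}QUZE_{2,2}\in\mathcal U$ and $E_{1,1}ZVQP^{-1}+E_{2,2}ZVQ(P-\tau Q)^{-1}\in\mathcal V$. The $\mathcal S$-components of $L_{\mathbf B}$ on these come only from the words $B\tilde A\cdots\tilde A\,U'$ and $V'\tilde A\cdots\tilde A\,B$, which enter with opposite signs. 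Hence
\[
\Phi(Z)=N_1ZE_{1,1}+N_2ZE_{2,2}-E_{1,1}ZN_1-E_{2,2}ZN_2 .
\]
This is not a Sylvester map $Z\mapsto MZ+ZM'$. The $\tau$-twist enters through the projections $E_{1,1},E_{2,2}$. The relative minus sign is forced, since the induced form on $\mathcal S$ is a Schur complement of an alternating form, hence alternating.

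\textbf{Why your choice fails.} We have $\Phi(E_{1,1})=[N_1,E_{1,1}]$ and $\Phi(E_{2,2})=[N_2,E_{2,2}]$. If you arrange $N_1,N_2$ to be diagonal, as you suggest, then $E_{1,1},E_{2,2}\in\ker\Phi$ and $\rank\Phi\le 2$, however distinct the entries. Eigenvalue separation is the wrong mechanism; what is needed are off-diagonal entries.

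Likewise, $\tau\ne 0$ must be imposed explicitly, not merely ``all but finitely many $\tau$''. For $\tau=0$ one has $N_1=N_2$ and $\Phi(Z)=[N_1,Z]$, which kills $I_2$.

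\textbf{The paper's choice.} Take $U=(u,u)$ with $QP^{-1}Qu$ and $Q(P-\tau Q)^{-1}Qu$ linearly independent. This is possible because, by (c) and $\tau\ne0$, the invertible matrices $QP^{-1}Q$ and $Q(P-\tau Q)^{-1}Q$ are not proportional. Take $V=(v_1,v_2)^{\mathsf T}$ to be the dual rows, so that $N_1=E_{1,1}+E_{1,2}$ and $N_2=E_{2,1}+E_{2,2}$. In the basis $E_{1,1},E_{1,2},E_{2,1},E_{2,2}$ the form on $\mathcal S$ is then a signed permutation matrix, hence nondegenerate.
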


\begin{proof}
We choose a good tuple $\mathbf{A}=(A_1,\cdots,A_k)\in\Mat_n(\mathbb{R})^k$ satisfying conditions (a), (b), (c) in Lemma \ref{lem:specialization} and $0\ne\tau\in\mathbb{R}$ such that $\det\left(s_k(\mathbf{A})-\tau s_{k-1}(\bar{\mathbf{A}})\right)\ne0$. Associate the tuple $\mathbf{A}$ with the tuple $\tilde{\mathbf{A}}=(\tilde{A}_1,\cdots,\tilde{A}_k)\in\Mat_{n+2}(\mathbb{R})^k$ as in \eqref{eq:diagonal-embed-n+2}. By Lemma \ref{lem:block-decomposition-n+2}, both $L_{\tilde{\mathbf{A}}}|_{\mathcal{U}}$ and $L_{\tilde{\mathbf{A}}}|_{\mathcal{V}}$ are linear isomorphisms. Let 
\[
\tilde{\mathbf{A}}(t)=(\tilde{A}_1,\cdots,{\tilde{A}_{k-1}},\tilde{A}_k+tB),
\]
where $t\in\mathbb{R}$ and $B$ is as in \eqref{eq:perturbation}. Then $L_{\tilde{\mathbf{A}}(t)}=L_{\tilde{\mathbf{A}}}+tL_{\mathbf{B}}$. Our goal is to find $t\in\mathbb{R},~U\in\Mat_{n\times2}(\mathbb{R})$ and $V\in\Mat_{2\times n}(\mathbb{R})$ such that $\tilde{\mathbf{A}}(t)$ is a good tuple.

By \eqref{eq:rad-kernel}, we can turn to $w_{\tilde{\mathbf{A}}(t)}=w_{\tilde{\mathbf{A}}}+tw_{\mathbf{B}}$. Choose a complementary space $\mathcal{F}$ of $\rad w_{\mathbf{A}}$ in $\Mat_n(\mathbb{R})$. Then $w_{\mathbf{A}}|_{\mathcal{F}}$ is nondegenerate and $\rank w_{\mathbf{A}}=\dim \mathcal{F}$. Write $w_{\tilde{\mathbf{A}}(t)}|_{\mathcal{F}\oplus\mathcal{U}\oplus\mathcal{V}\oplus\mathcal{S}}$ in blocks with respect to the decomposition $\mathcal{F}\oplus\mathcal{U}\oplus\mathcal{V}\oplus\mathcal{S}$. By Lemma \ref{lem:block-decomposition-n+2}, \ref{lem:perturbation-term-V}, the blocks are like
\begin{equation}\label{eq:block}
    \begin{pmatrix}
w_{\mathbf{A}}|_{\mathcal{F}}&tw_{\mathbf{B}}|_{\mathcal{F}\times\mathcal{U}}&tw_{\mathbf{B}}|_{\mathcal{F}\times\mathcal{V}}&0\\tw_{\mathbf{B}}|_{\mathcal{U}\times\mathcal{F}}&0&w_{\tilde{\mathbf{A}}}|_{\mathcal{U}\times\mathcal{V}}&tw_{\mathbf{B}}|_{\mathcal{U}\times\mathcal{S}}\\tw_{\mathbf{B}}|_{\mathcal{V}\times\mathcal{F}}&w_{\tilde{\mathbf{A}}}|_{\mathcal{V}\times\mathcal{U}}&0&tw_{\mathbf{B}}|_{\mathcal{V}\times\mathcal{S}}\\0&tw_{\mathbf{B}}|_{\mathcal{S}\times\mathcal{U}}&tw_{\mathbf{B}}|_{\mathcal{S}\times\mathcal{V}}&0
\end{pmatrix}.
\end{equation}
Here, $w_{\tilde{\mathbf{A}}}|_{\mathcal{U}\times\mathcal{V}}$ (resp. $w_{\tilde{\mathbf{A}}}|_{\mathcal{V}\times\mathcal{U}}$) is nondegenerate because $L_{\tilde{\mathbf{A}}}|_{\mathcal{V}}$ (resp. $L_{\tilde{\mathbf{A}}}|_{\mathcal{U}}$) is a linear isomorphism and the trace pairing on $\mathcal{U}\times\mathcal{V}$ is perfect. To compute the rank of \eqref{eq:block}, it is convenient to identify $w_{\tilde{\mathbf{A}}(t)}|_{\mathcal{F}\oplus\mathcal{U}\oplus\mathcal{V}\oplus\mathcal{S}}$ with
\[
\mathcal{F}\oplus\mathcal{U}\oplus\mathcal{V}\oplus\mathcal{S}\longrightarrow\mathcal{F}^*\oplus\mathcal{U}^*\oplus\mathcal{V}^*\oplus\mathcal{S}^*.
\]
We perform row operations on \eqref{eq:block}, using the isomorphisms
\[
w_{\tilde{\mathbf{A}}}|_{\mathcal{U}\times\mathcal{V}}:\mathcal{U}\rightarrow\mathcal{V}^*,\quad w_{\tilde{\mathbf{A}}}|_{\mathcal{V}\times\mathcal{U}}:\mathcal{V}\rightarrow\mathcal{U}^*
\]
to cancel out $tw_{\mathbf{B}}|_{\mathcal{S}\times\mathcal{V}}$ and $tw_{\mathbf{B}}|_{\mathcal{S}\times\mathcal{U}}$, respectively. We obtain
\begin{equation}\label{eq:row-operation}
     \begin{pmatrix}
w_{\mathbf{A}}|_{\mathcal{F}}&tw_{\mathbf{B}}|_{\mathcal{F}\times\mathcal{U}}&tw_{\mathbf{B}}|_{\mathcal{F}\times\mathcal{V}}&0\\tw_{\mathbf{B}}|_{\mathcal{U}\times\mathcal{F}}&0&w_{\tilde{\mathbf{A}}}|_{\mathcal{U}\times\mathcal{V}}&tw_{\mathbf{B}}|_{\mathcal{U}\times\mathcal{S}}\\tw_{\mathbf{B}}|_{\mathcal{V}\times\mathcal{F}}&w_{\tilde{\mathbf{A}}}|_{\mathcal{V}\times\mathcal{U}}&0&tw_{\mathbf{B}}|_{\mathcal{V}\times\mathcal{S}}\\t^2*&0&0&-t^2 w
\end{pmatrix},
\end{equation}
where 
\begin{equation}\label{eq:w-expression}
    w=w_{\mathbf{B}}|_{\mathcal{U}\times\mathcal{S}}(w_{\tilde{\mathbf{A}}}|_{\mathcal{U}\times\mathcal{V}})^{-1}w_{\mathbf{B}}|_{\mathcal{S}\times\mathcal{V}}+w_{\mathbf{B}}|_{\mathcal{V}\times\mathcal{S}}(w_{\tilde{\mathbf{A}}}|_{\mathcal{V}\times\mathcal{U}})^{-1}w_{\mathbf{B}}|_{\mathcal{S}\times\mathcal{U}}
\end{equation}
is a bilinear form on $\mathcal{S}$. Now assume that we can choose $U\in\Mat_{n\times2}(\mathbb{R})$ and $V\in\Mat_{2\times n}(\mathbb{R})$ such that $w$ is nondegenerate. After choosing bases of $\mathcal{F},~\mathcal{U},~\mathcal{V}$ and $\mathcal{S}$ respectively, the determinant of the matrix corresponding to \eqref{eq:row-operation} is congruent to
\[
t^8\det   \begin{pmatrix}
w_{\mathbf{A}}|_{\mathcal{F}}&0&0&0\\0&0&w_{\tilde{\mathbf{A}}}|_{\mathcal{U}\times\mathcal{V}}&w_{\mathbf{B}}|_{\mathcal{U}\times\mathcal{S}}\\0&w_{\tilde{\mathbf{A}}}|_{\mathcal{V}\times\mathcal{U}}&0&w_{\mathbf{B}}|_{\mathcal{V}\times\mathcal{S}}\\0&0&0&w
\end{pmatrix}
(\mod t^9),
\]
which is nonzero for $t\ne 0$ because $w_{\mathbf{A}}|_{\mathcal{F}},~w_{\tilde{\mathbf{A}}}|_{\mathcal{U}\times\mathcal{V}},~w_{\tilde{\mathbf{A}}}|_{\mathcal{V}\times\mathcal{U}}$ and $w$ are all nondegenerate. This implies that there exists $0\ne t\in\mathbb{R}$ such that the determinant of the matrix corresponding to \eqref{eq:row-operation} is nonzero. Therefore $w_{\tilde{\mathbf{A}}(t)}|_{\mathcal{F}\oplus\mathcal{U}\oplus\mathcal{V}\oplus\mathcal{S}}$ is nondegenerate for such $t$ and
\[
\dim\ker w_{\tilde{\mathbf{A}}(t)}\le n^2-\dim \mathcal{F}=\dim\ker w_{\mathbf{A}}.
\]
By Corollary \ref{nullity-lower-bound}, $\tilde{\mathbf{A}}(t)$ is a good tuple for $(k,n+2)$.

It remains to show that we can choose $U\in\Mat_{n\times2}(\mathbb{R})$ and $V\in\Mat_{2\times n}(\mathbb{R})$ such that $w$ is nondegenerate. Now we identify $\mathcal{U},~\mathcal{V},~\mathcal{S}$ with $\Mat_{n\times2}(\mathbb{R}),~\Mat_{2\times n}(\mathbb{R}),~\Mat_{2\times2}(\mathbb{R})$ respectively. For $Z\in\mathcal{S}$, by definition $(w_{\tilde{\mathbf{A}}}|_{\mathcal{U}\times\mathcal{V}})^{-1}w_{\mathbf{B}}|_{\mathcal{S}\times\mathcal{V}}(Z)$ is the unique element $U'=(u_1',u_2')\in\mathcal{U}$ satisfying
\[
w_{\tilde{\mathbf{A}}}(U',V')=w_{\mathbf{B}}(Z,V'),\quad\forall ~V'\in\mathcal{V}.
\]
We have
\[
w_{\tilde{\mathbf{A}}}(U',V')=-w_{\tilde{\mathbf{A}}}(V',U')=-\tr \left(V'L_{\tilde{\mathbf{A}}}(U')\right)
\]
and by Lemma \ref{lem:perturbation-term-V}
\[
w_{\mathbf{B}}(Z,V')=-w_{\mathbf{B}}(V',Z)=-\tr\left(V'L_{\mathbf{B}}(Z)\right)=-\tr\left(V's_{k-1}(\bar{\mathbf{A}})UZ\right).
\]
Therefore, we have $L_{\tilde{\mathbf{A}}}(U')=s_{k-1}(\bar{\mathbf{A}})UZ$. By Lemma \ref{lem:block-decomposition-n+2}, we have
\[
L_{\tilde{\mathbf{A}}}(U')=\left(s_{k}(\mathbf{A})u_1',~\left(s_{k}(\mathbf{A})-\tau s_{k-1}(\bar{\mathbf{A}})\right)u_2'\right).
\]
To simplify the notation, we denote $s_{k}(\mathbf{A})$ by $P$ and $s_{k-1}(\bar{\mathbf{A}})$ by $Q$. Set $E_{i,j}~(1\le i,j\le 2)$ be the matrix unit of $\Mat_2(\mathbb{R})$. Then
\[
(w_{\tilde{\mathbf{A}}}|_{\mathcal{U}\times\mathcal{V}})^{-1}w_{\mathbf{B}}|_{\mathcal{S}\times\mathcal{V}}(Z)=U'=P^{-1}QUZE_{1,1}+(P-\tau Q)^{-1}QUZE_{2,2}.
\]
A similar computation gives
\[
(w_{\tilde{\mathbf{A}}}|_{\mathcal{V}\times\mathcal{U}})^{-1}w_{\mathbf{B}}|_{\mathcal{S}\times\mathcal{U}}(Z)=E_{1,1}ZVQP^{-1}+E_{2,2}ZVQ(P-\tau Q)^{-1}.
\]
By \eqref{eq:w-expression}, for $Z_1,Z_2\in\mathcal{S}$ we have
\begin{align*}w(Z_1,Z_2)&=w_{\mathbf{B}}\left((w_{\tilde{\mathbf{A}}}|_{\mathcal{U}\times\mathcal{V}})^{-1}w_{\mathbf{B}}|_{\mathcal{S}\times\mathcal{V}}(Z_1),Z_2\right)+w_{\mathbf{B}}\left((w_{\tilde{\mathbf{A}}}|_{\mathcal{V}\times\mathcal{U}})^{-1}w_{\mathbf{B}}|_{\mathcal{S}\times\mathcal{U}}(Z_1),Z_2\right)\\
&=-\tr\left(\left(P^{-1}QUZ_1E_{1,1}+(P-\tau Q)^{-1}QUZ_1E_{2,2}\right)Z_2VQ\right)\\&+\tr\left(\left(E_{1,1}Z_1VQP^{-1}+E_{2,2}Z_1VQ(P-\tau Q)^{-1}\right)QUZ_2\right)\\
&=-\tr(VQP^{-1}QUZ_1E_{1,1}Z_2)-\tr\left(VQ(P-\tau Q)^{-1}QUZ_1E_{2,2}Z_2\right)\\
&+\tr(Z_2E_{1,1}Z_1VQP^{-1}QU)+\tr\left(Z_2E_{2,2}Z_1VQ(P-\tau Q)^{-1}QU\right).
\end{align*}
As $\tau\ne0$ and $\dim\Span\{s_k(\mathbf{A}),s_{k-1}(\bar{\mathbf{A}})\}=2$ by (c) of Lemma \ref{lem:specialization}, we have 
\[
\dim\Span\{QP^{-1}Q,Q(P-\tau Q)^{-1}Q\}=2.
\]
As both $QP^{-1}Q$ and $Q(P-\tau Q)^{-1}Q$ are invertible, we can choose $u\in\mathbb{R}^n$ such that
\[
\dim\Span\{QP^{-1}Qu,Q(P-\tau Q)^{-1}Qu\}=2.
\]
We further choose $v_1,v_2\in\mathbb{R}^n$ such that 
\[
(v_1,v_2)^{\mathsf{T}}(QP^{-1}Qu,Q(P-\tau Q)^{-1}Qu)=I_2.
\]
Set $U=(u,u),~V=(v_1,v_2)^{\mathsf{T}}$. For this specialization, we have
\begin{align*}
    w(Z_1,Z_2)&=-\tr\left((E_{1,1}+E_{1,2})Z_1E_{1,1}Z_2\right)-\tr\left((E_{2,1}+E_{2,2})Z_1E_{2,2}Z_2\right)\\
    &+\tr\left(Z_2E_{1,1}Z_1(E_{1,1}+E_{1,2})\right)+\tr\left(Z_2E_{2,2}Z_1(E_{2,1}+E_{2,2})\right).
\end{align*}
Under the basis $E_{1,1},~E_{1,2},~E_{2,1},~E_{2,2}$ of $\mathcal{S}$, the matrix corresponding to $w$ is
\[
\begin{pmatrix}
0&0&1&0\\0&0&0&-1\\-1&0&0&0\\0&1&0&0
\end{pmatrix},
\]
which is nonsingular. This implies that $w$ is nondegenerate.
\end{proof}

\section{Proof of the main theorem}
\begin{proof}[Proof of Theorem \ref{thm:main}]
    By Theorem \ref{thm:boundary}, Theorem \ref{thm:additional-case} and Proposition \ref{prop:n-to-n+2}, a good tuple exists for any pair $(k,n)$. By Corollary \ref{nullity-lower-bound}, good tuples form a Zariski-open subset of $\Mat_n(\mathbb{R})^k$.
\end{proof}

\begin{remark}
    One checks that the whole proof holds for any field of characteristic zero. Combining Theorem \ref{thm:main} with \cite[Theorem 1.2]{BrassilReichstein}, we obtain Theorem \ref{thm:complete}, which is a complete answer for Conjecture \ref{conj}.

\end{remark}

\section*{Acknowledgements}
The first author thanks Professor Zhang Lei and the National University of Singapore for their hospitality during her research visit. The second author thanks the Morningside Center of Mathematics for its hospitality during the 2026 Summer School on Algebra and Number Theory.

The main ideas and the proof strategies were developed by the authors independently, though artificial intelligence tools were used in some of the proofs presented in this paper. For example, the construction of $q$-forests in Subsection \ref{subsec} was first given by AI. The explicit good tuple for $(k,n+1)=(3,4)$ in the proof of Theorem \ref{thm:additional-case} is also given by AI. All content generated by AI has been completely rewritten and carefully checked. The authors take full responsibility for their validity.

\end{document}